\documentclass[11pt,a4paper]{amsart}
\usepackage[utf8]{inputenc}

\usepackage{amssymb}
\usepackage{amscd}
\usepackage{amsmath}
\usepackage{mathtools}
\usepackage{eucal}
\usepackage{graphicx}
\usepackage{amsthm}
\usepackage{hyperref}
\usepackage{mathrsfs}
\usepackage{tikz}
\usepackage{float}
\usepackage{geometry}
\usepackage{tikz-cd} 
\usepackage{wrapfig}

\usetikzlibrary{arrows,intersections}
\usetikzlibrary{patterns}
\usepackage{capt-of}

\hypersetup{
	colorlinks=true,
	citecolor=blue,
	linkcolor=blue,
	urlcolor=blue
}

\newtheorem{theorem}{Theorem}[section]
\newtheorem{lemma}[theorem]{Lemma}
\newtheorem{corollary}[theorem]{Corollary}
\newtheorem{proposition}[theorem]{Proposition}
\theoremstyle{definition}

\theoremstyle{remark}

\newtheorem{example}[theorem]{Example}
\newtheorem{conjecture}[theorem]{Conjecture}

\newcommand{\Res}{\operatorname*{Res}}

\title{An Enhanced Higher Green's Function on the Modular Curve}
\author{N.~Sakharova}
\address{National Research University Higher School of Economics, Laboratory of Mirror Symmetry, 6 Usacheva str., Moscow, Russia, 119048}
\email{nsakharova@hse.ru}

\date{}

\begin{document}
	\begin{abstract}
		The higher Green's functions $G_{k/2}(z,w)$ on the modular curve are real-valued eigenfunctions of the hyperbolic Laplacian in each variable, with logarithmic singularities along the diagonal. Gross and Zagier conjectured that their values at CM points, after multiplication by suitable powers of the discriminants, are logarithms of absolute values of algebraic numbers \cite{GZ86}. This conjecture was proved by Li \cite{Li23} and Bruinier, Li, and Yang \cite{BLY25}. In \cite{Mel08}, Mellit proposed a complex refinement of this conjecture. It predicts that the normalized CM values of a complex lift of the Green function are logarithms of algebraic numbers modulo periods.
		
		For $\Gamma=\mathrm{SL}_2(\mathbb Z)$ and $k\in{4,6,8,10,14}$, we follow Zagier's construction of CM invariants from Eichler integrals of the Eisenstein series $E_4$ \cite{GZ00}. By applying Maass raising operators in both variables to a normalized iterated Eichler integral of a Bergman-type kernel, we obtain a complex-valued extension $\mathcal G_{k/2}^\bullet(z,w)$ whose real part is a constant multiple of $G_{k/2}(z,w)$. We study the normalized CM values of $\mathcal G_{k/2}^\bullet$ and describe their dependence on the choice of representatives of quadratic form classes. In particular, we show that replacing the quadratic form corresponding to the first variable by an equivalent one changes the exponential of the normalized value only by a root of unity. We also prove the algebraicity of these exponentials in the special case where the second CM point is $w_*=(1+i\sqrt d)/2$, with $d>3$ and $d\equiv3\pmod4$, and the class of the quadratic form corresponding to the first variable is equal to its inverse.
	\end{abstract}
	
	\maketitle
	
	\section{Introduction}
	\label{sec:introduction}
	
	\subsection{Motivation and main result}
	\label{subsec:background}
	
	Without much loss of generality, we work with the full modular group $\Gamma=\mathrm{SL}_2(\mathbb Z)$. We also restrict to the special case where the space $S_{k}(\Gamma)$ of cusp forms of weight $k$ for $\Gamma$ is trivial: $k \in S:=\{4,6,8,10,14\}$. Let $z=x+iy \in \mathfrak H$. The hyperbolic Laplace--Beltrami operator on the upper half-plane $\mathfrak H$ is given by
	$\Delta_{\mathrm{hyp}} = -y^2\left(
	\frac{\partial^2}{\partial x^2}
	+ \frac{\partial^2}{\partial y^2}
	\right)$. For $z,w\in\mathfrak H$, the resolvent kernel of this operator on $\mathfrak H$ with spectral parameter $s$ is given by
	$$G_s^{\mathfrak H}(z,w) = -2Q_{s-1}\bigl(1+\frac{|z-w|^2}{2\,\Im z\,\Im w}\bigr),$$
	where $Q_{s-1}$ is the Legendre function of the second kind. For $X_\Gamma=\Gamma\backslash \mathfrak H$,
	$\Gamma=\mathrm{PSL}_2(\mathbb Z)$, the higher Green's function on the modular curve \cite{GZ85,GZ86,GKZ87} is defined by the series
	$$G_s(z,w) = \sum_{\gamma\in\Gamma}G_s^{\mathfrak H}(z,\gamma w)~~~ (\Re(s)>1).$$
	Let $k$ be a positive even integer. The functions $G_{k/2}(z,w)$ are $\Gamma$-bi-invariant and real-analytic on
	$(\Gamma\backslash\mathfrak H)^2$ away from the diagonal; they have a logarithmic singularity along the diagonal and are eigenfunctions of the hyperbolic Laplacian in each variable with eigenvalue $s(1-s)$. At the cusp, as $y\to\infty$, the constant term of the Green's function is proportional to the non-holomorphic Eisenstein series $y^{1-s}E_k(w,s)$ \cite{GZ86,Za88}.
	
	For $k \in S$, the Fourier coefficients of $G_{k/2}^{\mathcal{H} / \Gamma}$ are rational numbers, except for the constant term, which involves $\zeta(2k-1)$. Gross and Zagier conjectured that suitably normalized values of higher Green's functions at CM points are logarithms of absolute values of algebraic numbers in the corresponding class fields \cite{GZ86}.
	
	The Gross--Zagier conjecture has been studied for several decades. In \cite{Mel08} Mellit constructed a complex lift of the CM values of higher Green's functions and predicted that their suitably normalized exponentials are algebraic. Using higher Chow cycles, he proved this conjecture for the full modular group in weight $4$, with one of the CM points equal to $i$. In \cite{Via11,Via15}, Viazovska related CM values of higher Green's functions to regularized Petersson inner products and proved the conjecture when both CM points lie in the same imaginary quadratic field. In \cite{Li22}, Li obtained an averaged formula for CM values and a factorization of the corresponding algebraic numbers. Bruinier, Ehlen, and Yang \cite{BEY21} proved a partially averaged version of the conjecture for orthogonal Shimura varieties. The logarithmic algebraicity of individual CM values was subsequently proved by Li \cite{Li23} and by Bruinier, Li, and Yang \cite{BLY25}. More recently, Brown and Fonseca \cite{BF25} gave a geometric and motivic interpretation of the Gross--Zagier conjecture and showed that it follows from a conjecture concerning the action of Hecke operators on the motives of cusp forms.
	
	In the present paper, we use the classical logarithmic algebraicity of higher Green's functions as an established result. To study their complex-valued enhancement at CM points, we consider a two-variable function $H_k(z,w)$ obtained by iterated Eichler integration of the Bergman-type kernel $\Omega_k(z,w)$. Applying Maass raising operators in both variables gives a function $\mathcal G_{k/2}(z,w)$ whose real part agrees with $G_{k/2}(z,w)$ up to a term involving a non-holomorphic Eisenstein series. By adding a suitable polynomial correction to $H_k$, we obtain a function $H_k^\bullet$ such that the real part of the corresponding function after Maass raising in both variables coincides exactly with $G_{k/2}$:
	$2(-1)^{k/2+1}\Re\mathcal G_{k/2}^\bullet=G_{k/2}$.
	
	We then study the CM values of $\mathcal G_{k/2}^\bullet$ and their transformation when a quadratic form is replaced by an equivalent one. For CM points $z$ and $w$, the exponential of the normalized value of $\mathcal G_{k/2}^\bullet(z,w)$ is well defined up to multiplication by a root of unity. As a consequence, for $w_*=(1+i\sqrt d)/2$, $d>3$, $d\equiv3\pmod4$, we obtain a special case of the complex Gross--Zagier conjecture formulated by Mellit \cite{Mel08}: these complex numbers are algebraic if the quadratic-form class associated with the first variable is self-inverse.
	
	The paper is organized as follows. In Section~\ref{sec:preliminaries}, we introduce the functions $\Omega_k$, $F_k$, $H_k$, and $G_{k/2}$, together with the differential relations among them and their Fourier expansions. In Section~\ref{sec:maassgreen}, we establish the relation between the Maass raising of $H_k$ and the higher Green's function.
	In Section~\ref{sec:zagier-normalization}, we compute the Eichler periods of $H_k$, choose a normalization $H_k^\bullet$, and study the dependence of its CM values on the choice of a representative of the quadratic form.
	In Section~\ref{sec:extended-gross-zagier}, we study the arithmetic properties of the CM values of $\mathcal G_{k/2}^\bullet$ and prove a special case of the complex Gross--Zagier conjecture formulated by Mellit.
	
	\subsection*{Acknowledgements}
	The author thanks Don Zagier for a helpful discussion and the Max Planck Institute for Mathematics in Bonn for its hospitality during a visit in 2021. The author is also grateful to Andrey Levin for many valuable conversations.
	
	This work was supported by the HSE University Basic Research Program (HSE-BR-2025-060).
	
	\subsection{Kernels and differential operators}
	\label{sec:preliminaries}
	
	We consider the following four kernels:
	$$
	\Omega_k(z,w),\qquad F_k(z,w),\qquad G_{k/2}(z,w),\qquad H_k(z,w).
	$$
	The first three have biweights $(k,k)$, $(k,2-k)$, and $(0,0)$, respectively. The function $H_k$ is modular of weight $2-k$ in $w$, while in $z$ it is an Eichler integral, transforming with the same weight up to a period polynomial of degree at most $k-2$.
	
	The \emph{Bergman-type kernel} is the holomorphic Poincaré series of weight $k$ in each variable:
	$$
	\refstepcounter{equation}\label{eq:Omega-definition}
	\Omega_k(z,w)
	:=\frac{1}{2}\frac{(k-1)!}{(2 \pi i)^k}\sum_{\gamma\in\Gamma}
	\frac{1}{(-czw-dw+az+b)^{k}},
	\qquad
	\gamma=\begin{pmatrix}a&b\\c&d\end{pmatrix},\eqno(\theequation)
	$$
	which converges absolutely for $k\ge 4$ and is holomorphic away from the diagonal
	$\Gamma$-orbit. Following Asai, Kaneko, and Ninomiya \cite{AKN97}, we introduce the meromorphic kernel of biweight $(k,2-k)$ with a simple pole along the diagonal:
	$$
	\refstepcounter{equation}\label{eq:F-definition}
	F_k(z,w) =\frac{E_k(z)\,E_{14-k}(w)}       {\Delta(w)\bigl(j(w)-j(z)\bigr)}.\eqno(\theequation)
	$$
	Here $E_k$ denotes the normalized Eisenstein series of weight $k$, 
	$$E_k(z) = 1 - \frac{2k}{B_k} \sum_{n \ge 1} \sigma_{k-1}(n) q^n,$$
	where $B_k$ is the $k$-th Bernoulli number, $q=e^{2\pi iz}$, and
	$\sigma_{k-1}(n)=\sum_{d\mid n}d^{k-1}$.
	We also use the standard notation
	$\Delta(z)=\eta^{24}=\frac{E_4(z)^3-E_6(z)^2}{1728}$ and
	$j(z)=1728\frac{E_4(z)^3}{E_4(z)^3-E_6(z)^2}
	=q^{-1}+744+196884q+\ldots$
	for the modular discriminant and the classical modular invariant, respectively.
	
	Finally, for fixed $w$, we define the Eichler integral of $F_k(\,\cdot\,,w)$ in the $z$-variable by
	$$
	\refstepcounter{equation}\label{eq:H-definition-intro}
	H_k(z,w):=\frac{(2\pi i)^{k-1}}{(k-2)!}\int_{i\infty}^{z}(z-t)^{k-2}F_k(t,w)\,dt,\eqno(\theequation)
	$$
	with the following choice of integration path. For fixed $w$, choose
	$Y=\max\{\Im w,(\Im w)^{-1}\}$, so that $\Im(\gamma w)\le Y$ for every $\gamma\in\Gamma$ and hence the orbit $\Gamma w$ lies below the line $\Im t=Y$. For $\Im z>Y$, the integral is taken along the vertical path from $i\infty$ to $z$. The function $H_k(z,w)$ is then extended to $\mathfrak H\setminus\Gamma w$ by analytic continuation along paths avoiding the orbit $\Gamma w$.
	
By construction, $D_z^{k-1}H_k(z,w)=F_k(z,w)$, where $D_z=(2\pi i)^{-1}d/dz$. 
	
	\begin{proposition}
		\label{thm:diagonal-logarithm}
		Let $w\notin\Gamma\cdot\{i,\rho\}$, where $\rho=e^{2\pi i/3}$. Then, for $z$ sufficiently close to $w$ and a fixed local branch of $\log(z-w)$, we have
		$$
		\refstepcounter{equation}\label{eq:H-diagonal-log}
		H_k(z,w)=\frac{(2\pi i)^{k-2}}{(k-2)!}(z-w)^{k-2}\log(z-w)
		+H_{k,\mathrm{reg}}(z,w),\eqno(\theequation)
		$$
		where $H_{k,\mathrm{reg}}$ is holomorphic in a neighborhood of the diagonal.
	\end{proposition}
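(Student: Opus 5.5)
The proof rests on two ingredients: the residue of $F_k(\cdot,w)$ at $t=w$, and what a simple pole contributes to a $(k-1)$-fold Eichler integral.

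\textbf{Residue of $F_k$.} Fix $w\notin\Gamma\cdot\{i,\rho\}$. Then $j'(w)\neq 0$, so $t\mapsto j(w)-j(t)$ has a simple zero at $t=w$. In a small disc $U$ around $w$ containing no other point of $\Gamma w$, the function $F_k(\cdot,w)$ is therefore meromorphic with exactly one simple pole, at $t=w$. Writing $j'=2\pi i\,Dj$, its residue there is
$$
\Res_{t=w}F_k(t,w)=-\frac{E_k(w)E_{14-k}(w)}{\Delta(w)\,j'(w)}.
$$
I will then use the classical identity $Dj=-E_4^2E_6/\Delta=-E_{14}/\Delta$. Since $S_{14}=0$, the product $E_kE_{14-k}$ is a weight-$14$ modular form with constant term $1$, so $E_kE_{14-k}=E_{14}$ (for $k=4,10$ read $E_0=1$ as needed; $k=14$ is the case $E_{14}\cdot E_0$). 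Substituting gives $\Res_{t=w}F_k(t,w)=1/(2\pi i)$.

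\textbf{Local structure of $H_k$.} Write $F_k(t,w)=\frac{1}{2\pi i}\frac{1}{t-w}+R(t)$ with $R$ holomorphic on $U$. Choose a base point $z_0\in U$ and split the path of integration in \eqref{eq:H-definition-intro} into a piece from $i\infty$ to $z_0$ and a piece from $z_0$ to $z$ inside $U$. The first piece, $\int_{i\infty}^{z_0}(z-t)^{k-2}F_k(t,w)\,dt$, is a polynomial in $z$, hence entire. On the second piece, the $R$-part is holomorphic on $U$. What remains is
$$
\frac{(2\pi i)^{k-2}}{(k-2)!}\int_{z_0}^{z}\frac{(z-t)^{k-2}}{t-w}\,dt.
$$
Expand $(z-t)^{k-2}=\bigl((z-w)-(t-w)\bigr)^{k-2}$ binomially. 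The $j=0$ term gives $(z-w)^{k-2}\bigl(\log(z-w)-\log(z_0-w)\bigr)$, and the terms with $j\ge 1$ integrate to polynomials in $z$. This yields \eqref{eq:H-diagonal-log} in the stated normalization. The analytic continuation of $H_k$ around the other points of $\Gamma w$ changes $H_k$ only by polynomials in $z$: each loop adds $2\pi i(z-\gamma w)^{k-2}$ times a residue. So the decomposition holds on a slit disc, and once a branch of $\log(z-w)$ is fixed, $H_{k,\mathrm{reg}}$ is holomorphic in $z$ near $w$.

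\textbf{Holomorphy near the diagonal.} To get holomorphy jointly in $(z,w)$ near the diagonal, I will let $w$ vary in a small disc and keep $z_0$ fixed. The integrands depend holomorphically on $w$, and $R(t,w)=F_k(t,w)-\frac{1}{2\pi i(t-w)}$ is jointly holomorphic, because the residue is the constant $1/(2\pi i)$ independent of $w$. The constant term $\log(z_0-w)$ is also holomorphic in $w$.

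The \emph{main obstacle} is the residue computation, which requires the exact identities $E_kE_{14-k}=E_{14}$ and $Dj=-E_{14}/\Delta$. The remaining steps are routine bookkeeping with Eichler integrals.
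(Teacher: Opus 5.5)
Your proof is correct and follows the paper's route: you compute $\Res_{t=w}F_k(t,w)=1/(2\pi i)$ from $Dj=-E_{14}/\Delta$ and $E_kE_{14-k}=E_{14}$, then integrate the singular part $\frac{(2\pi i)^{k-2}}{(k-2)!}\frac{(z-w)^{k-2}}{t-w}$ of the integrand, and your splitting at $z_0$, binomial expansion and joint-holomorphy check make the paper's sketch more precise. One harmless slip: the convention $E_0=1$ is needed only for $k=14$, not for $k=4,10$.
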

	
	\begin{proof}
		As $t\to w$, we have $j(w)-j(t)=-j'(w)(t-w)+O((t-w)^2)$.
		Using $\frac{j'(w)}{2\pi i}=-\frac{E_k(w)E_{14-k}(w)}{\Delta(w)}$,
		we obtain $F_k(t,w)=\frac{1}{2\pi i(t-w)}+O(1)$.
		Since $(z-t)^{k-2}=(z-w)^{k-2}+O(t-w)$,
		the singular part of the integrand is	$\frac{(2\pi i)^{k-2}}{(k-2)!}	\frac{(z-w)^{k-2}}{t-w}.$	Integrating this expression gives the logarithmic singularity in 	\eqref{eq:H-diagonal-log}, while the remainder is holomorphic near the diagonal.
	\end{proof}
	Let $\operatorname{Mon}_wH_k$ denote the change in $H_k(z,w)$ under analytic continuation in the $z$-variable along a single positively oriented loop around $z=w$. It follows from \eqref{eq:H-diagonal-log} that
	$$
	\operatorname{Mon}_wH_k(z,w)=\frac{(2\pi i)^{k-1}}{(k-2)!}(z-w)^{k-2}.
	$$
	The four kernels considered here are related by the following differential operators: the holomorphic differential operator $D_z^n=(2\pi i)^{-n}d^n/dz^n$ and the Maass raising operator, which maps functions of weight $\kappa$ to functions of weight $\kappa+2$:
	$$
	\refstepcounter{equation}\label{eq:Maass-raising}
	\partial_\kappa f(z)=Df(z)-\frac{\kappa}{4\pi y}f(z),
	\qquad y=\Im z.\eqno(\theequation)
	$$
	Let
	$$
	\refstepcounter{equation}\label{eq:Maass-composition}
	\mathcal R^{\kappa}_{[n]}
	:=\partial_{\kappa+2(n-1)}\circ\partial_{\kappa+2(n-2)}\circ\cdots\circ\partial_{\kappa+2}\circ\partial_{\kappa},\eqno(\theequation)
	$$
	denote the $n$-fold Maass raising operator, starting in weight $\kappa$ and ending in weight $\kappa+2n$. The four kernels are then related by the following diagram:
	$$
	\refstepcounter{equation}\label{eq:kernel-diagram}
	\begin{tikzcd}[row sep=large,column sep=large]
		H_k(z,w)
		\arrow[r,"D_z^{k-1}"]
		\arrow[dr,"{\mathcal R^{2-k}_{[k/2-1],z}\,\mathcal R^{2-k}_{[k/2-1],w}\,;\,\Re\pmod E}",swap]
		&F_k(z,w)
		\arrow[r,"D_w^{k-1}"]
		&\Omega_k(z,w)
		\\
		&G_{k/2}(z,w)
		\arrow[ur,"{\mathcal R^{0}_{[k/2],z}\,\mathcal R^{0}_{[k/2],w}}",swap]
		&
	\end{tikzcd}
	\eqno(\theequation)
	$$
	where the diagonal arrow is understood up to the addition of the non-holomorphic Eisenstein series $E\left(w,k/2\right)$.
	
	The functions $\Omega_k$, $F_k$, $G_{k/2}$, and $H_k$ admit well-known Fourier expansions in $(q_z,q_w)$, where $q_z=e^{2\pi iz}$ and $q_w=e^{2\pi iw}$, involving modified Bessel functions and Kloosterman sums. Let $p^{(k)}_{-m}(z)$ be the weakly holomorphic Poincaré series of weight $k$ with a pole of order $m$ at infinity:
	$$
	\refstepcounter{equation}
	p^{(k)}_{-m}(z)=\frac{1}{2}\sum_{\gamma \in \Gamma_{\infty} \backslash \Gamma}\frac{e^{-2 \pi i m \gamma z}}{(cz+d)^{k}}.\eqno(\theequation)
	$$
	For the Poincaré series with principal part $q^{-1}$ at infinity, one has
	$$p^{(k)}_{-1}(z)=E_k(z) \left(j(z) - 744 + \frac{2k}{B_k} \right),$$
	whereas the Poincaré series with principal part $q^{-n}$ are obtained from this one by the action of the Hecke operator:
	$p_{-m}^{(k)}=m^{1-k}p_{-1}^{(k)}\big|_kT(m)$.
	Let $m>0$ and write
	$$
	p^{(k)}_{-m}(z)= e^{-2\pi i m z}+\sum_{n>0}\pi^{(k)}(-m, n)e^{2 \pi i n z}.
	$$
	The Fourier coefficients of these Poincaré series are given by
	$$
	\refstepcounter{equation}\label{eq:Poincare}
	\pi^{(k)}(-m, n)=2\pi (-1)^{k/2}\left( \frac{n}{m}\right) ^{(k-1)/2}\sum_{c>0}\frac{K(-m, n; c)}{c}I_{k-1}\left(\frac{4 \pi \sqrt{mn}}{c}\right),\eqno(\theequation)
	$$
	where $K(m,n;c)$ is the standard Kloosterman sum
	$$
	\refstepcounter{equation}\label{eq:Kloosterman-definition}
	K(m,n;c)
	=\!\!\!\sum_{\substack{d\bmod c\\(d,c)=1}}\!\!
	e^{2\pi i(md+n\bar d)/c},
	\qquad d\bar d\equiv 1\pmod c,\eqno(\theequation)
	$$
	with $K(m,n;1)=1$, and $I_{k-1}$ is the modified Bessel function,
	$I_{k-1}(x) = \sum_{n=0}^\infty \frac{(x/2)^{k-1+2n}}{n! \, \Gamma(k + n )}$. \\
	
	The Poincaré series $g_{-m}^{(k)}\in M_{2-k}^!(\Gamma)$ are the unique weakly holomorphic modular forms of weight $2-k$ with principal part $q^{-m}$ at the cusp:
	$$
	\refstepcounter{equation}\label{eq:Hecke-gn}
	g_{-m}^{(k)}=m^{k-1}\,g_{-1}^{(k)}\big|_{2-k}T(m),
	\qquad g_{-1}^{(k)}=E_{14-k}/\Delta,\eqno(\theequation)
	$$
	
	If $g_{-m}^{(k)}(z)=q^{-m}+\sum_{n\ge0}\gamma^{(k)}(-m,n)q^n$, then its constant Fourier coefficient is $\gamma^{(k)}(-m,0)=\frac{2k}{B_k}\sigma_{k-1}(m)$. Since $\frac{1}{(-m)^{k-1}}D^{k-1}g_{-m}^{(k)}(z)=p_{-m}^{(k)}(z)$, comparison of Fourier coefficients gives $\gamma^{(k)}(-m,n)=\left(-\frac{m}{n}\right)^{k-1}\pi^{(k)}(-m,n)$.\\
	
	The kernels $\Omega_k$, $F_k$, and $H_k$ have the following Fourier expansions.
	
	\begin{lemma}\label{lem:fourier-expansions}
		If $\Im z>\max\{\Im w,(\Im w)^{-1}\}$, then:
		\begin{enumerate}
			\item For the kernel $\Omega_k$, we have
			$$
			\refstepcounter{equation}
			\begin{gathered}
				\Omega_k(z,w)
				=\sum_{m>0}m^{k-1}p_{-m}^{(k)}(w)q_z^m
				=\sum_{m>0}m^{k-1}q_z^m q_w^{-m}
				+\sum_{m,n>0}m^{k-1}\pi^{(k)}(-m,n)q_z^m q_w^n.
			\end{gathered}\eqno(\theequation)
			$$
			
			\item For $F_k$, we have
			$$
			\refstepcounter{equation}
			\begin{gathered}
				F_k(z,w)
				=-\sum_{m>0}g_{-m}^{(k)}(w)q_z^m\\
				=-\sum_{m>0}
				\left(q_w^{-m}+\frac{2k}{B_k}\sigma_{k-1}(m)\right)q_z^m
				+\sum_{m,n>0}\left(\frac{m}{n}\right)^{k-1}
				\pi^{(k)}(-m,n)q_z^m q_w^n.
			\end{gathered}\eqno(\theequation)
			$$
			
			\item For $H_k$, we have
			$$
			\refstepcounter{equation}
			\begin{gathered}
				H_k(z,w)
				=-\sum_{m>0}\frac{g_{-m}^{(k)}(w)}{m^{k-1}}q_z^m\\
				=-\operatorname{Li}_{k-1}\left(\frac{q_z}{q_w}\right)
				-\frac{2k}{B_k}\sum_{m>0}
				\frac{\sigma_{k-1}(m)}{m^{k-1}}q_z^m
				+\sum_{m,n>0}
				\frac{\pi^{(k)}(-m,n)}{n^{k-1}}q_z^m q_w^n.
			\end{gathered}\eqno(\theequation)
			$$
		\end{enumerate}
	\end{lemma}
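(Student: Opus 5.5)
The plan is to establish part (2) first, since $F_k$ has a closed form, and then obtain (3) by integrating in $z$ and (1) by differentiating in $w$.

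\textbf{The kernel $F_k$.} Fix $w$ and regard $F_k(\cdot,w)$ as a function of $z$.
\begin{itemize}
\item Since $E_k(z)/(j(w)-j(z))$ is a meromorphic modular form of weight $k$ in $z$, it has poles only on $\Gamma w$.
\item As $\Im z\to\infty$ we have $1/(j(w)-j(z))=-q_z/(1-j(w)q_z+O(q_z^2))$. This is holomorphic at the cusp and vanishes there.
\item Hence for $\Im z>Y$, above the whole orbit $\Gamma w$, there is a convergent expansion $F_k(z,w)=\sum_{m>0}c_m(w)q_z^m$. The bound $\Im(\gamma w)\le Y$ guarantees that no pole lies in this region.
\end{itemize}
To identify $c_m(w)$, I use the classical Asai--Kaneko--Ninomiya argument: expand in the other variable instead. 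Writing
$$\frac{1}{j(w)-j(z)}=\sum_{m\ge1}\frac{j_m(w)}{\ldots}$$
is awkward, so I prefer the generating-function identity
$$\frac{E_{14-k}(w)}{\Delta(w)}\cdot\frac{E_k(z)}{j(w)-j(z)}=-\sum_{m>0} g_{-m}^{(k)}(w)q_z^m .$$
I prove it as follows.
\begin{enumerate}
\item Compute $c_m(w)=\int_{iy}^{iy+1}F_k(z,w)q_z^{-m}\,dx$ for large $y$.
\item Note that $c_m(w)$ is weakly holomorphic of weight $2-k$ in $w$, because $F_k$ has weight $2-k$ in $w$ and the region $\Im z>Y$ can be taken uniform for $w$ in compacta.
\item Determine the principal part of $c_m$ at $w=i\infty$. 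Expanding for $\Im w\gg\Im z$ gives $F_k=q_w E_{14-k}(w)/\Delta(w)\cdot(\ldots)$. Alternatively, for $\Im z>\Im w$ large, write
$$\frac{1}{j(w)-j(z)}=-\frac{1}{j(z)}\sum_{r\ge0}\Big(\frac{j(w)}{j(z)}\Big)^r,$$
which shows that $c_m(w)$ is a polynomial in $j(w)$ of degree $m$ times $E_{14-k}/\Delta$. Its principal part is therefore read off as $-q_w^{-m}+O(1)$.
\item Conclude by the uniqueness of $g_{-m}^{(k)}$: since $S_k=0$, there is no weight-$(2-k)$ form with vanishing principal part except $0$. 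This uniqueness argument is the main obstacle, specifically pinning down that the principal part is exactly $-q_w^{-m}$ with no lower terms. I would check it by comparing with the diagonal residue $1/(2\pi i(z-w))$ from Proposition~\ref{thm:diagonal-logarithm} through a contour-shift argument in $z$.
\end{enumerate}
The second line of (2) then follows by inserting the Fourier expansion of $g_{-m}^{(k)}$, using $\gamma^{(k)}(-m,0)=\frac{2k}{B_k}\sigma_{k-1}(m)$ and $\gamma^{(k)}(-m,n)=(-m/n)^{k-1}\pi^{(k)}(-m,n)$ with $k$ even.

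\textbf{The kernel $H_k$.} Integrate termwise along the vertical path in \eqref{eq:H-definition-intro}, using
$$\frac{(2\pi i)^{k-1}}{(k-2)!}\int_{i\infty}^z (z-t)^{k-2}q_t^m\,dt=m^{1-k}q_z^m.$$
Termwise integration is justified by uniform convergence on $\Im t\ge \Im z>Y$. This gives the first line of (3). The second line follows from $\sum_m q_z^m q_w^{-m}/m^{k-1}=\operatorname{Li}_{k-1}(q_z/q_w)$, which converges because $|q_z/q_w|<1$.

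\textbf{The kernel $\Omega_k$.} Apply $D_w^{k-1}$ to (2) and use $D^{k-1}g_{-m}^{(k)}=(-m)^{k-1}p_{-m}^{(k)}=-m^{k-1}p^{(k)}_{-m}$ (since $k$ is even). This gives $\sum_m m^{k-1}p^{(k)}_{-m}(w)q_z^m$. It remains to check that this equals the series \eqref{eq:Omega-definition}, i.e.\ that $D_w^{k-1}F_k=\Omega_k$ as in diagram \eqref{eq:kernel-diagram}. I would verify this independently by unfolding \eqref{eq:Omega-definition}. Split $\Gamma$ into cosets $\Gamma_\infty\gamma$, apply Lipschitz's formula
$$\sum_{n}(\tau+n)^{-k}=\frac{(-2\pi i)^k}{(k-1)!}\sum_{m>0}m^{k-1}e^{2\pi i m\tau}$$
in the $z$-variable with $\tau=z-\gamma w$, and account for the factor $\tfrac12$ from $\pm\gamma$. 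This yields exactly $\sum_m m^{k-1}q_z^m\cdot p^{(k)}_{-m}(w)$. The second line of (1) is then immediate from the Fourier expansion of $p^{(k)}_{-m}$.
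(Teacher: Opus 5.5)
The paper gives no proof of this lemma. It treats the three expansions as well known: for $F_k$ it relies on an Asai--Kaneko--Ninomiya type generating function, and for $\Omega_k$ on standard unfolding. So your proposal is a self-contained proof of something the paper takes as input, and its structure is sound:
\begin{itemize}
\item For (2), coefficient extraction plus uniqueness in $M^!_{2-k}$.
\item For (3), termwise integration using $\frac{(2\pi i)^{k-1}}{(k-2)!}\int_{i\infty}^z(z-t)^{k-2}q_t^m\,dt=m^{1-k}q_z^m$.
\item For (1), unfolding over $\pm T^n$ together with Lipschitz's formula.
\end{itemize}
The unfolding proves (1) on its own. Comparing it with $D_w^{k-1}$ of (2) then gives, as a by-product, the arrow $D_w^{k-1}F_k=\Omega_k$ of \eqref{eq:kernel-diagram}, which the paper also asserts without proof. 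Compared with quoting the literature, your route costs a little analysis. In return it produces the principal part and the constant term of the coefficients directly, from the same residue $1/(2\pi i)$ that governs the monodromy of $H_k$.

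The decisive step is the one you only sketch, and the sentence before it is not right as written. The geometric series gives $c_m(w)=\frac{E_{14-k}(w)}{\Delta(w)}P_m(j(w))$ with $\deg P_m=m-1$ (not $m$) and leading coefficient $-1$. This shows only that the principal part is $-q_w^{-m}+\sum_{0<r<m}a_rq_w^{-r}$; nothing about the $a_r$ can be ``read off''.

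Your contour-shift idea does close the gap. Take $\Im w>1$, so every point of $\Gamma w$ other than the translates $w+n$ has imaginary part at most $1/\Im w<1$. Choose $1<y_0<\Im w<y_1$ and a period rectangle between these heights containing exactly one translate of $w$. The vertical sides cancel by periodicity. By Proposition~\ref{thm:diagonal-logarithm}, the residue of $F_k(z,w)q_z^{-m}$ there is $q_w^{-m}/(2\pi i)$. Hence
\[
c_m(w)=\int_{iy_0}^{iy_0+1}F_k(z,w)\,q_z^{-m}\,dz-q_w^{-m}.
\]
Your unfinished remark in Step~3, that for $\Im w\gg\Im z$ the factor $q_w$ from $1/(j(w)-j(z))$ cancels the pole of $E_{14-k}/\Delta$, is exactly what controls this lower integral. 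Explicitly,
$F_k(z,w)=E_k(z)\frac{E_{14-k}(w)}{E_4(w)^3}\bigl(1-j(z)/j(w)\bigr)^{-1}\to E_k(z)$
uniformly on the segment as $\Im w\to\infty$. So the integral tends to $-\frac{2k}{B_k}\sigma_{k-1}(m)$, and
$c_m(w)=-q_w^{-m}-\frac{2k}{B_k}\sigma_{k-1}(m)+o(1)$.
This identifies $c_m=-g^{(k)}_{-m}$ and also proves the value of $\gamma^{(k)}(-m,0)$ that the paper quotes.

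Smaller points:
\begin{itemize}
\item Uniqueness holds because $M_{2-k}=\{0\}$ in negative weight, not because $S_k=0$. The latter is what guarantees existence of $g^{(k)}_{-m}$, which you do not need since $c_m$ already exists.
\item The cusp expansion should read $1/(j(w)-j(z))=-q_z/\bigl(1+(744-j(w))q_z+O(q_z^2)\bigr)$.
\item Passing from iterated sums to the double sums in (1)--(3) needs absolute convergence. This holds because $\pi^{(k)}(-m,n)$ grows at most like a power of $mn$ times $e^{4\pi\sqrt{mn}}$, and in the stated region $y_zy_w>1$, so that $my_z+ny_w-2\sqrt{mn}\ge\bigl(1-(y_zy_w)^{-1/2}\bigr)\min(y_z,y_w)(m+n)$.
\end{itemize}
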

	
	\subsection{Maass raising and the higher Green's function}
	\label{sec:maassgreen}
	
	The relation between $H_k(z,w)$ and the higher Green's function is given by Maass raising operators acting in both variables. Let $K_\nu$ denote the Bessel $K$-function,
	$K_\nu(x)=\frac{\pi}{2}\frac{I_{-\nu}(x)-I_\nu(x)}{\sin(\pi\nu)}$. For half-integral order $\nu$, the function $K_\nu(x)$ is elementary:
	$$
	K_{\frac{k-1}{2}}(x)=\sqrt{\frac{\pi}{2}}\,x^{-\frac{k-1}{2}}e^{-x}h_k(x),\qquad h_k(x)=\sum_{j=0}^{k/2-1}\frac{(k/2-1+j)!}{2^jj!(k/2-1-j)!}\,x^{k/2-1-j}.
	$$
	A direct computation shows that the action of the iterated Maass raising operator $\mathcal R_{[k/2-1]}^{2-k}$ on an exponential is given by
	$\mathcal R_{[k/2-1]}^{2-k}q^r=\frac{h_k(2\pi ry)}{(2\pi y)^{k/2-1}}q^r.$
	For $r>0$, the identity takes the form
	$$
	\mathcal R_{[k/2-1]}^{2-k}q^r=\sqrt{\frac{2}{\pi}}(2\pi y)^{1/2}r^{\frac{k-1}{2}}K_{\frac{k-1}{2}}(2\pi ry)e^{2\pi irx}.
	$$
	
	We next apply $\mathcal R_{[k/2-1]}^{2-k}$ to the Poincaré series $g_{-m}^{(k)}$ of weight $2-k$.
	\begin{lemma}
		$$
		\refstepcounter{equation}
		\begin{gathered}
			\mathcal R^{2-k}_{[k/2-1]}g_{-m}^{(k)}(z)
			=\frac{h_k(-2\pi my)}{(2\pi y)^{k/2-1}}e^{-2\pi imz}
			+\gamma^{(k)}(-m,0)\frac{(k-2)!}{(k/2-1)!(4\pi)^{k/2-1}}y^{1-k/2}\\
			+2y^{1/2}\sum_{n>0}\gamma^{(k)}(-m,n)n^{\frac{k-1}{2}}K_{\frac{k-1}{2}}(2\pi ny)e^{2\pi inx}.
		\end{gathered}\eqno(\theequation)
		$$
	\end{lemma}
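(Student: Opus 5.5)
The plan is to apply $\mathcal R^{2-k}_{[k/2-1]}$ term by term to the Fourier expansion
$g_{-m}^{(k)}(z)=q^{-m}+\gamma^{(k)}(-m,0)+\sum_{n>0}\gamma^{(k)}(-m,n)q^n$.
Since the raising operators are linear differential operators with coefficients depending only on $y$, this is legitimate. The expansion converges absolutely and locally uniformly together with all derivatives, because $g_{-m}^{(k)}$ is weakly holomorphic. Applying the operator termwise therefore reduces the lemma to three separate computations: one for the principal part $q^{-m}$, one for the constant term, and one for the positive-frequency terms.

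For the exponentials I will use the identity stated just before the lemma,
$\mathcal R^{2-k}_{[k/2-1]}q^r=\frac{h_k(2\pi ry)}{(2\pi y)^{k/2-1}}q^r$.
This identity is valid for every real $r$, since its derivation is a formal computation in which $r$ only enters polynomially. To make this step self-contained, I would verify it by induction on the number of raisings. Write $\mathcal R_{[j]}^{2-k}q^r=P_j(y)q^r$. Then $\partial_\kappa(Pq^r)=(rP+\frac{1}{4\pi i}P'-\frac{\kappa}{4\pi y}P)q^r$, using $D=\frac{1}{2\pi i}\frac{d}{dz}$ and $\frac{d}{dz}=\frac{1}{2i}\frac{d}{dy}$ on functions of $y$ alone. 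One then checks that the recursion is satisfied by the Bessel polynomial $h_k$; this is the standard recurrence for reverse Bessel polynomials.

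With the identity in hand, the three pieces go as follows.
\begin{itemize}
\item Taking $r=-m$ gives the first term $\frac{h_k(-2\pi my)}{(2\pi y)^{k/2-1}}e^{-2\pi imz}$ directly.
\item For $r=n>0$, the $K$-Bessel form of the identity gives $\sqrt{2/\pi}(2\pi y)^{1/2}n^{(k-1)/2}K_{\frac{k-1}{2}}(2\pi ny)e^{2\pi inx}$. Since $\sqrt{2/\pi}\,(2\pi)^{1/2}=2$, this produces exactly the stated summand $2y^{1/2}\gamma^{(k)}(-m,n)n^{\frac{k-1}{2}}K_{\frac{k-1}{2}}(2\pi ny)e^{2\pi inx}$.
\item For the constant term ($r=0$), only the constant coefficient of $h_k$ survives. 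This is the $j=k/2-1$ term, equal to $\frac{(k-2)!}{2^{k/2-1}(k/2-1)!}$. Dividing by $(2\pi y)^{k/2-1}$ gives $\frac{(k-2)!}{(k/2-1)!(4\pi)^{k/2-1}}y^{1-k/2}$, as claimed.
\end{itemize}

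As a consistency check on the constant term, I would also compute it directly. Each raising acts on $y^{a}$ as multiplication by a factor, since $Dy^a=\frac{a}{4\pi}y^{a-1}$ and so $\partial_\kappa y^a=\frac{a-\kappa}{4\pi}y^{a-1}$. Iterating from $a=0$, $\kappa=2-k$ yields $\prod_{j=0}^{k/2-2}\frac{k-2-j}{4\pi}\,y^{1-k/2}=\frac{(k-2)!}{(k/2)!\ldots}$; more precisely the product equals $\frac{(k-2)!}{(k/2-1)!(4\pi)^{k/2-1}}$, which agrees with the formula above.

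I expect the main obstacle to be the normalization bookkeeping in the polynomial identity. Concretely, this means confirming that the factors of $2$ and $4\pi$ from the chosen normalization of $D$ match $h_k$ exactly, and checking the identity's validity for negative $r$. Everything else is direct substitution.
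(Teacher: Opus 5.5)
Your proposal is correct and follows the paper's own (implicit) argument: expand $g_{-m}^{(k)}$ in $q$, apply the stated identity for $\mathcal R^{2-k}_{[k/2-1]}q^r$ termwise with $r=-m$, $r=0$ and $r=n>0$, and rewrite the positive-frequency terms using $K_{\frac{k-1}{2}}$. One correction concerns your auxiliary checks: on functions of $y$ alone $D=-\frac{1}{4\pi}\frac{d}{dy}$, so the recursion should read $P_{j+1}=rP_j-\frac{1}{4\pi}P_j'-\frac{\kappa}{4\pi y}P_j$ and $\partial_\kappa y^a=-\frac{a+\kappa}{4\pi}y^{a-1}$; your versions, $\frac{1}{4\pi i}P'$ and $\frac{a-\kappa}{4\pi}y^{a-1}$, would reproduce neither $h_k$ nor the (correct) product $\prod_{j}\frac{k-2-j}{4\pi}$ that you wrote down.
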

	
	In \cite{Ni73} Niebur introduced the real-analytic Poincaré series of weight $0$
	$$
	\mathfrak F_m^{(s)}(z):=\sum_{M\in\Gamma_\infty\backslash\Gamma}e(m\Re Mz)(\Im Mz)^{1/2}I_{s-1/2}(2\pi|m|\Im Mz),
	$$
	which converge absolutely and uniformly for $\Re s>1$ on sets of the form $\{x+iy:|x|\le A,\ y\ge\varepsilon\}$.
	
	We now obtain the Fourier expansion of the function obtained by applying the Maass raising operators to $H_k(z,w)$.
	\begin{theorem}
		Define
		$$
		\mathcal G_{k/2}(z,w)=\mathcal R^{2-k}_{[k/2-1],z}\mathcal R^{2-k}_{[k/2-1],w}H_k(z,w).
		$$
		Let $z=x_z+iy_z$, $w=x_w+iy_w$, and suppose that $y_z>\max\{y_w,y_w^{-1}\}$. Then
		$$
		\refstepcounter{equation}
		\mathcal G_{k/2}(z,w)=-4\pi(-1)^{k/2+1}\sqrt{y_z}\sum_{m>0}\mathfrak F_{-m}^{(k/2)}(w)K_{\frac{k-1}{2}}(2\pi my_z)e^{2\pi imx_z}.\eqno(\theequation)
		$$
	\end{theorem}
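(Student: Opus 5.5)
Starting from Lemma~\ref{lem:fourier-expansions}(3), in the region $y_z>\max\{y_w,y_w^{-1}\}$ we have the absolutely convergent expansion $H_k(z,w)=-\sum_{m>0}m^{1-k}g_{-m}^{(k)}(w)q_z^m$. The plan is to apply the two raising operators termwise, since they act on different variables and commute. Termwise differentiation is legitimate because the series and its $z$- and $w$-derivatives converge locally uniformly in this region; this follows from the Kloosterman--Bessel bounds on $\pi^{(k)}(-m,n)$ together with the decay of $q_z^m/q_w^m$ when $y_z>y_w$.

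In the $z$-variable, the formula just before the theorem gives
\[
\mathcal R^{2-k}_{[k/2-1],z}\,q_z^m=\sqrt{2/\pi}\,(2\pi y_z)^{1/2}m^{(k-1)/2}K_{\frac{k-1}{2}}(2\pi m y_z)e^{2\pi i m x_z},
\]
so the $z$-part already has the required shape $\sqrt{y_z}\,K_{\frac{k-1}{2}}(2\pi m y_z)e^{2\pi i m x_z}$. In the $w$-variable, the preceding lemma gives an explicit Fourier expansion of $\mathcal R^{2-k}_{[k/2-1]}g_{-m}^{(k)}(w)$. The key step is therefore to identify this expansion, multiplied by $m^{1-k}\cdot 2\,m^{(k-1)/2}$ and the remaining constants, with a constant multiple of Niebur's series $\mathfrak F_{-m}^{(k/2)}(w)$.

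To make this identification, I would compute the Fourier expansion of $\mathfrak F_{-m}^{(s)}$ at $s=k/2$ by the standard unfolding (Niebur \cite{Ni73}). The identity coset contributes $e(-mx)y^{1/2}I_{(k-1)/2}(2\pi m y)$. The remaining cosets contribute a constant term proportional to $y^{1-s}$ times a Kloosterman zeta value, and $n$-th coefficients of the form $c\,\sum_{c'}K(-m,n;c')c'^{-1}I_{k-1}(4\pi\sqrt{mn}/c')\,y^{1/2}K_{(k-1)/2}(2\pi|n|y)$; the terms with $n<0$ should be absorbed as well. These are then compared term by term with the preceding lemma:
\begin{itemize}
\item for $n>0$, via \eqref{eq:Poincare} and $\gamma^{(k)}(-m,n)=(-m/n)^{k-1}\pi^{(k)}(-m,n)$;
\item for the principal part, via the elementary identity $h_k(-x)e^{x}\propto x^{(k-1)/2}\bigl(I_{(k-1)/2}(x)\pm\text{(multiple of) }K_{(k-1)/2}(x)\bigr)$, which comes from $K_\nu=\tfrac{\pi}{2}(I_{-\nu}-I_\nu)/\sin\pi\nu$ with half-integral $\nu$;
\item for the constant term, using $\gamma^{(k)}(-m,0)=\tfrac{2k}{B_k}\sigma_{k-1}(m)$ against the Kloosterman zeta value $\sum_c K(-m,0;c)c^{-k}=\sigma_{1-k}(m)/\zeta(k)$.
\end{itemize}
Collecting the constants $2$, $\sqrt{2/\pi}\sqrt{2\pi}$, and the sign $(-1)^{k/2}$ from \eqref{eq:Poincare} should then give exactly $-4\pi(-1)^{k/2+1}$.

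The main obstacle will be the principal-part matching. The term $h_k(-2\pi m y)e^{-2\pi i m w}/(2\pi y)^{k/2-1}$ is not a pure $I$-Bessel function, and its $K$-Bessel component must cancel against, or be absorbed by, the $n=-m$ Kloosterman terms of $\mathfrak F_{-m}$. A related subtlety is that in weight $0$ the real-analytic series $\mathfrak F_{-m}^{(k/2)}$ is characterized by its growth at the cusp and the eigenvalue equation. My first approach to both issues would be a uniqueness argument rather than matching every coefficient. Both $\mathcal R^{2-k}_{[k/2-1]}g_{-m}^{(k)}$ and a suitable multiple of $\mathfrak F_{-m}^{(k/2)}$ are $\Gamma$-invariant eigenfunctions of $\Delta_{\mathrm{hyp}}$ with eigenvalue $\tfrac{k}{2}(1-\tfrac{k}{2})$, with the same exponentially growing part $e(-mx)y^{1/2}I_{(k-1)/2}(2\pi m y)$. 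Their difference is then of moderate growth, and it is cuspidal once the constant terms agree. Since $S_k(\Gamma)=0$ for $k\in S$, there are no Maass cusp forms at this eigenvalue, because they would correspond via raising to holomorphic cusp forms of weight $k$. This forces the difference to vanish. I would use this argument as the backbone and invoke explicit coefficient comparison only to fix the normalizing constant.
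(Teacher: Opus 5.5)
Your proof is correct. Its skeleton matches the paper's: raise Lemma~\ref{lem:fourier-expansions}(3) termwise, note that the $z$-side produces $2\sqrt{y_z}\,m^{(k-1)/2}K_{\frac{k-1}{2}}(2\pi my_z)e^{2\pi imx_z}$, and so reduce the theorem to the one-variable identity $m^{-(k-1)/2}\mathcal R^{2-k}_{[k/2-1]}g^{(k)}_{-m}=2\pi(-1)^{k/2+1}\mathfrak F^{(k/2)}_{-m}$. You depart from the paper only in how you prove this identity.

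The paper's route, set up by the lemma on $\mathcal R^{2-k}_{[k/2-1]}g^{(k)}_{-m}$ placed just before Niebur's series, is to match that expansion against Niebur's full Fourier expansion. This requires:
\begin{itemize}
\item the $n>0$ terms via \eqref{eq:Poincare};
\item the constant term via $\sum_cK(-m,0;c)c^{-k}=\sigma_{1-k}(m)/\zeta(k)$;
\item the $n<0$ terms, whose $J_{k-1}$-Kloosterman sums collapse, by Petersson's formula and $S_k(\Gamma)=0$, to the single $K$-Bessel term at $n=-m$ that absorbs the non-$I$ part of the principal part.
\end{itemize}
In return, that route gives every coefficient of $\mathfrak F^{(k/2)}_{-m}$ explicitly.

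Your uniqueness argument needs much less. It uses only the bound $\mathfrak F^{(k/2)}_{-m}-e(-mx)y^{1/2}I_{\frac{k-1}{2}}(2\pi my)=O(y^{1-k/2})$, which holds because this difference is dominated by $E(z,k/2)-y^{k/2}$. It also uses the elementary identity $e^{x}h_k(-x)=(-1)^{k/2-1}\sqrt{2\pi}\,x^{(k-1)/2}\bigl(I_{\frac{k-1}{2}}(x)+(-1)^{k/2-1}\pi^{-1}K_{\frac{k-1}{2}}(x)\bigr)$. This identity does yield the constant $2\pi(-1)^{k/2+1}$, and hence the stated $-4\pi(-1)^{k/2+1}$.

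You can shorten your last step. The difference $\Phi$ is $O(y^{1-k/2})$ at the cusp, so it is square-integrable whether or not the constant terms agree. The eigenvalue $\frac k2(1-\frac k2)$ is negative, while $\langle\Delta_{\mathrm{hyp}}\Phi,\Phi\rangle=\int_{\Gamma\backslash\mathfrak H}\bigl(|\partial_x\Phi|^2+|\partial_y\Phi|^2\bigr)\,dx\,dy\ge0$; this forces $\Phi=0$. The maximum principle applied to $\Re\Phi$ and $\Im\Phi$ gives the same conclusion. Your detour through raising to weight-$k$ cusp forms is therefore valid but unnecessary. In your route, $S_k(\Gamma)=0$ is used only to guarantee that $g^{(k)}_{-m}$ exists.
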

	
	The higher Green's function has the Fourier expansion \cite{Za88}
	$$
	\refstepcounter{equation}
	\begin{gathered}
		G_{k/2}(z,w)=\frac{4\pi}{1-k}y_z^{1-k/2}E\left(w,k/2\right)\\
		-4\pi\sqrt{y_z}\sum_{m>0}K_{\frac{k-1}{2}}(2\pi my_z)
		\left(\mathfrak F_m^{(k/2)}(w)e^{-2\pi imx_z}+\mathfrak F_{-m}^{(k/2)}(w)e^{2\pi imx_z}\right),
	\end{gathered}\eqno(\theequation)
	$$
	where $E(z,s)=\sum_{\gamma\in\Gamma_\infty\backslash\Gamma}[y^s]\circ\gamma$ denotes the non-holomorphic Eisenstein series. Comparing this expansion with the preceding theorem and using
	$\overline{\mathfrak F_{-m}^{(k/2)}(w)}=\mathfrak F_m^{(k/2)}(w)$
	gives the following identity.
	
	\begin{theorem}\label{thm:maass-green}
		The real part of the enhanced function $\mathcal G_{k/2}(z,w)$ agrees with the higher Green's function up to a non-holomorphic Eisenstein term:
		$$
		2(-1)^{k/2+1}\Re\mathcal G_{k/2}(z,w)
		=G_{k/2}(z,w)-\frac{4\pi}{1-k}y_z^{1-k/2}E\left(w, k/2\right).
		$$
	\end{theorem}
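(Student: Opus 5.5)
We first prove the identity on the region $y_z>\max\{y_w,y_w^{-1}\}$, where both Fourier expansions are available, and then extend it by analytic continuation.

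\textbf{Step 1: real part of the expansion.} By the preceding theorem, on this region
$$2(-1)^{k/2+1}\mathcal G_{k/2}(z,w)=-8\pi\sqrt{y_z}\sum_{m>0}\mathfrak F_{-m}^{(k/2)}(w)K_{\frac{k-1}{2}}(2\pi my_z)e^{2\pi imx_z},$$
since $(-1)^{k/2+1}\cdot(-1)^{k/2+1}=1$. For real argument, $K_{\frac{k-1}{2}}(2\pi my_z)$ and $\sqrt{y_z}$ are real. Using $\overline{\mathfrak F_{-m}^{(k/2)}(w)}=\mathfrak F_m^{(k/2)}(w)$, which holds because $I_{s-1/2}$ is real for real $s$ and $e(-m\Re Mz)$ conjugates to $e(m\Re Mz)$, we get
$$2\Re\Bigl(\mathfrak F_{-m}^{(k/2)}(w)e^{2\pi imx_z}\Bigr)=\mathfrak F_{-m}^{(k/2)}(w)e^{2\pi imx_z}+\mathfrak F_{m}^{(k/2)}(w)e^{-2\pi imx_z}.$$
Taking real parts termwise is legitimate because the series converges absolutely: $K_{\frac{k-1}{2}}(2\pi my_z)$ decays like $e^{-2\pi my_z}$, while $\mathfrak F_{\pm m}^{(k/2)}(w)$ grows at most like $e^{2\pi m\cdot\max(y_w,y_w^{-1})}$ (the same bound that gives convergence of the Fourier expansion of $G_{k/2}$). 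This yields
$$2(-1)^{k/2+1}\Re\mathcal G_{k/2}(z,w)=-4\pi\sqrt{y_z}\sum_{m>0}K_{\frac{k-1}{2}}(2\pi my_z)\Bigl(\mathfrak F_m^{(k/2)}(w)e^{-2\pi imx_z}+\mathfrak F_{-m}^{(k/2)}(w)e^{2\pi imx_z}\Bigr).$$
By Zagier's expansion \cite{Za88} this equals $G_{k/2}(z,w)-\frac{4\pi}{1-k}y_z^{1-k/2}E(w,k/2)$, so the identity holds on the region.

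\textbf{Step 2: extension to $\mathfrak H\times\mathfrak H$ off the diagonal orbit.} Both sides are real-analytic in $(z,w)$ on the complement of $\{z\in\Gamma w\}$ after fixing branches. The right side is single-valued there. For the left side, $H_k$ is multivalued, but its ambiguities are of two kinds:
\begin{itemize}
\item the monodromy $\operatorname{Mon}_wH_k=\frac{(2\pi i)^{k-1}}{(k-2)!}(z-w)^{k-2}$, together with its $\Gamma$-translates;
\item period polynomials in $z$ of degree at most $k-2$, arising from $\Gamma$-translation in $z$.
\end{itemize}
Both are polynomials in $z$ of degree $\le k-2$, and both are killed by $D_z^{k-1}$.

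We need $\mathcal R^{2-k}_{[k/2-1],z}$ not to annihilate such polynomials, but to map them to something purely imaginary once combined with the raising in $w$. Write
$$\frac{(2\pi i)^{k-1}}{(k-2)!}(z-w)^{k-2}=2\pi i\cdot\frac{(2\pi i)^{k-2}}{(k-2)!}(z-w)^{k-2}.$$
The polynomial $(z-w)^{k-2}$ is the weight $(2-k,2-k)$ kernel, and the Bol-type identity gives
$$\mathcal R^{2-k}_{[k/2-1],z}\mathcal R^{2-k}_{[k/2-1],w}(z-w)^{k-2}=c_k\,Q$$
for a real constant $c_k$, where $Q=\Bigl(\frac{|z-w|^2}{y_zy_w}\Bigr)$ is a polynomial in the real point-pair invariant. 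To verify this we would use that both raising operators intertwine the $\mathrm{SL}_2(\mathbb R)$ action, so the result is a biweight-$(0,0)$ invariant function of the point pair. Since $(2\pi i)^{k-2}\in\mathbb R$ for even $k$, the prefactor $2\pi i$ makes the contribution purely imaginary. Hence $\Re\mathcal G_{k/2}$ is single-valued and real-analytic off the diagonal orbit.

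The identity theorem for real-analytic functions on the connected set $\mathfrak H^2\setminus\{z\in\Gamma w\}$ then propagates Step 1 everywhere.

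\textbf{Main obstacle.} The main obstacle is Step 2, namely verifying that the period-polynomial ambiguities from $z\mapsto\gamma z$ also raise to purely imaginary quantities. The period polynomials come from Eichler integrals of the kernel $F_k$, which has rational $q_w$-coefficients up to the normalization $(2\pi i)^{k-1}$, so their reality properties need checking. If only the statement on the region $y_z>\max\{y_w,y_w^{-1}\}$ is needed, Step 1 alone suffices.
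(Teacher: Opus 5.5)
Your Step~1 is the paper's proof. On $y_z>\max\{y_w,y_w^{-1}\}$ you compare the expansion of $\mathcal G_{k/2}$ from the preceding theorem with Zagier's expansion of $G_{k/2}$, using $\overline{\mathfrak F_{-m}^{(k/2)}(w)}=\mathfrak F_m^{(k/2)}(w)$, and your signs are right. You do not need absolute convergence, since $\Re$ commutes with any convergent sum. Beyond this the paper only remarks that the identity continues analytically in $z$. Your Step~2 goes further than the paper, and it contains a real misconception.

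\textbf{The second bullet of Step~2 is not an ambiguity of the left-hand side.} For fixed $w$, $\mathcal G_{k/2}(\cdot,w)$ is multivalued on $\mathfrak H\setminus\Gamma w$ (or on $\mathfrak H^2\setminus\{z\in\Gamma w\}$). Its multivaluedness comes only from loops in that domain, i.e.\ from meridians around the curves $z=\gamma w$. A period polynomial compares values at two different points $z$ and $\gamma z$, and the identity never does that.

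In fact the property you call the main obstacle is false, and it has to be. The right-hand side contains $y_z^{1-k/2}E(w,k/2)$, which is not $\Gamma$-invariant in $z$, so the raised period polynomials must have nonzero real part. Concretely, for $S$ at $z=iy$,
\[
\Re\bigl(\mathcal G_{k/2}(i/y,w)-\mathcal G_{k/2}(iy,w)\bigr)=\frac{2(-1)^{k/2+1}\pi}{k-1}\bigl(y^{k/2-1}-y^{1-k/2}\bigr)E\left(w,\frac k2\right)\neq0 .
\]
This is exactly how the paper extracts $\Im\widehat\Pi_0^{(k)}(w)\neq0$ in Theorem~\ref{theorem:raised-Pi0-Eisenstein}. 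Had you tried to verify your obstacle, it would have failed, and that failure says nothing against the theorem. You should drop that bullet entirely.

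\textbf{What you actually need is much less, and your meridian computation supplies it.} The monodromy of $H_k$ around $z=\gamma w$ is $C_k(czw+dz-aw-b)^{k-2}=C_k\,(z-w)^{k-2}\big|_{2-k,z}\gamma^{-1}$. After raising, this becomes $C_k$ times a function of $d(z,\gamma w)$. That function is real, which you asserted without proof: the map $f(z,w)\mapsto\overline{f(-\bar z,-\bar w)}$ commutes with the raising operators and fixes $(z-w)^{k-2}$ for even $k$. For $k=4$ one gets $\frac1{2\pi^2}\bigl(1+\frac{|z-w|^2}{2y_zy_w}\bigr)$. Since $C_k\in i\mathbb R$, each meridian changes $\mathcal G_{k/2}$ by a purely imaginary amount, in agreement with Proposition~\ref{prop:CM-diagonal-monodromy}.

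There is an even simpler route. Apply the identity theorem for real-analytic functions along each continuation path that starts in the Fourier region. Every branch of $\Re\mathcal G_{k/2}$ then equals the single-valued right-hand side. Single-valuedness of $\Re\mathcal G_{k/2}$ comes out as a corollary rather than being an input, and this is all the paper's remark on analytic continuation requires.
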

	
	This identity admits analytic continuation in the $z$-variable. If the orbit $\Gamma w$ does not intersect the positive imaginary axis, the continuation can be carried out along the entire axis.
	
	The Fourier expansion of $H_k$ also yields a representation of the higher Green's function in terms of the single-valued Bloch--Wigner--Ramakrishnan polylogarithms, analogous to Zagier's formula for the higher Green's function on the punctured unit disk \cite[\S3]{Za90}. For odd $\ell\ge1$, let $D_\ell$ denote the single-valued function in Zagier's normalization \cite[\S2]{Za90}:
	$$
	D_\ell(\xi)=
	\Re\sum_{j=0}^{\ell-1}
	\frac{(-\log|\xi|)^j}{j!}\operatorname{Li}_{\ell-j}(\xi)
	+\frac{\log^\ell|\xi|}{2\ell!}.
	$$
	The function $D_\ell$ is single-valued on $\mathbb C\setminus\{0,1\}$ and satisfies
	$D_\ell(\xi^{-1})=D_\ell(\xi)$.
	
	\begin{proposition}
		\label{prop:polylog-expansion}
		\begin{enumerate}
			\item If $y_z>y_w$ and $y_zy_w>1$, then
			$$
			\refstepcounter{equation}\label{zag:H}
			\begin{aligned}
				H_k(z,w)={}&-\operatorname{Li}_{k-1}(q_z/q_w)
				-\frac{2k}{B_k}\sum_{r\ge1}\operatorname{Li}_{k-1}(q_z^r)-\sum_{r,s\ge1}\gamma^{(k)}(-1,rs)\operatorname{Li}_{k-1}(q_z^rq_w^s).
			\end{aligned}
			\eqno(\theequation)
			$$
			\item If $y_zy_w>1$ and $z-w\notin\mathbb Z$, the Green's function admits the representation
			\begin{multline}\label{zag:G}
				G_{k/2}(z,w)=-\frac{4\pi}{k-1}y_z^{1-k/2}E\left(w,\frac{k}{2}\right)\\
				+\frac{1}{(4\pi^2y_zy_w)^{k/2-1}}
				\sum_{\substack{r\ge1,\ s\ge-1\\rs\ge-1}}
				\gamma^{(k)}(-1,rs)\Bigg[
				\frac{(-1)^{k/2}}{2^{k-3}}
				\sum_{\substack{a,b\ge0\\a+b\le k/2-1}}
				\frac{(-1)^{a+b}(k-2-2a)!}{a!(k/2-1-a)!}\\
				\times\frac{(k-2-2b)!}{b!(k/2-1-b)!}
				(2\pi r y_z)^{2a}(2\pi s y_w)^{2b}
				D_{k-1-2a-2b}(q_z^rq_w^s)\\
				-\frac{(2\pi r y_z)^{k-1}+(2\pi s y_w)^{k-1}}{k-1}
				\Bigg],
			\end{multline}
			where $\gamma^{(k)}(-1,-1)=1$. The double series in \eqref{zag:G} converges locally uniformly in the region $y_zy_w>1$.
		\end{enumerate}
	\end{proposition}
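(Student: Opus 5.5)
For part (1), I would start from the Fourier expansion in Lemma~\ref{lem:fourier-expansions}(3), namely $H_k(z,w)=-\sum_{m>0}m^{1-k}g_{-m}^{(k)}(w)q_z^m$. The strategy is to write $g_{-m}^{(k)}$ in terms of $g_{-1}^{(k)}$ through the Hecke relation \eqref{eq:Hecke-gn}.

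Apply $T(m)$ in weight $2-k$ to $g_{-1}^{(k)}=q^{-1}+\sum_{n\ge0}\gamma^{(k)}(-1,n)q^n$. The resulting coefficients have the form
$$\gamma^{(k)}(-m,n)=\sum_{d\mid(m,n)}d^{1-k}\,\gamma^{(k)}(-1,mn/d^2)\cdot(\text{normalization}),$$
and the normalization $m^{k-1}$ in \eqref{eq:Hecke-gn} fixes the powers of $d$. Concretely, $g_{-m}^{(k)}/m^{k-1}$ gets the coefficient $\sum_{d\mid (m,n)} d^{1-k}\gamma^{(k)}(-1,mn/d^2)$ in front of $q_w^n$, with $\gamma^{(k)}(-1,-1)=1$ and $\gamma^{(k)}(-1,0)=2k/B_k$.

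Next I would substitute $m=dr$ and $n=ds$, so that $mn/d^2=rs$. The double sum then reorganizes as
$$\sum_{r,s}\gamma^{(k)}(-1,rs)\sum_{d\ge1}d^{1-k}(q_z^rq_w^s)^d=\sum_{r,s}\gamma^{(k)}(-1,rs)\operatorname{Li}_{k-1}(q_z^rq_w^s).$$
The three ranges of $s$ give the three terms of \eqref{zag:H}:
\begin{itemize}
\item $s=-1$ with $r=1$ gives $-\operatorname{Li}_{k-1}(q_z/q_w)$;
\item $s=0$ gives the $E_k$-term, since $\sigma_{k-1}(m)/m^{k-1}=\sum_{d\mid m}d^{1-k}$;
\item $s\ge1$ gives the last double sum.
\end{itemize}
Absolute convergence for $y_z>y_w$ and $y_zy_w>1$ justifies the rearrangement. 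Here I would use the growth $\gamma^{(k)}(-1,n)=O(e^{4\pi\sqrt n})$, which follows from \eqref{eq:Poincare}, together with $|q_z^rq_w^s|=e^{-2\pi(ry_z+sy_w)}$ and the AM–GM bound $ry_z+sy_w\ge2\sqrt{rs\,y_zy_w}$. Since $y_zy_w>1$, this dominates $4\pi\sqrt{rs}$. The single-valued continuation to the full stated region $y_zy_w>1$ would follow by analytic continuation in $z$.

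For part (2), I would apply $\mathcal R^{2-k}_{[k/2-1],z}\mathcal R^{2-k}_{[k/2-1],w}$ termwise to \eqref{zag:H} and then take the real part, using Theorem~\ref{thm:maass-green}.

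The key computation is the action of the product of raising operators on $\operatorname{Li}_{k-1}(q_z^rq_w^s)$. I would expand each operator as a sum $\sum_a c_a (4\pi y)^{-a}D^{k/2-1-a}$; the coefficients come from the $h_k$ formula. The derivative $D_z$ acts on $\operatorname{Li}_j(q_z^rq_w^s)$ as multiplication by $r$ together with lowering the index: $D_z\operatorname{Li}_j=r\operatorname{Li}_{j-1}$, and similarly $D_w$ gives a factor $s$. So the result is a bilinear combination
$$\sum_{a,b}(\ast)\,(2\pi ry_z)^{a'}(2\pi sy_w)^{b'}\operatorname{Li}_{j}(q_z^rq_w^s)\big/(4\pi^2y_zy_w)^{k/2-1}.$$

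Taking the real part, I then need to recombine these terms into Zagier's $D_\ell$. Here I would use $-\log|q_z^rq_w^s|=2\pi(ry_z+sy_w)$ and expand the binomials. I expect this to be the main obstacle. The recombination has to produce the even-power structure $(2\pi ry_z)^{2a}(2\pi sy_w)^{2b}$ and the leftover polynomial term $-\frac{(2\pi ry_z)^{k-1}+(2\pi sy_w)^{k-1}}{k-1}$. Those leftover terms are what combine the $\log^\ell$ correction in $D_\ell$ with the Eisenstein term $\frac{4\pi}{1-k}y_z^{1-k/2}E(w,k/2)$.

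My first attempt at this step would be to verify the identity on the single function $\xi=q_z^rq_w^s$, treating $u=2\pi ry_z$ and $v=2\pi sy_w$ as independent variables. The task then reduces to a generating-function identity: a two-variable analogue of Zagier's one-variable computation in \cite[\S3]{Za90}, checked by comparing coefficients of $u^{i}v^{j}$. The overall sign $(-1)^{k/2}/2^{k-3}$ should come from the factor $2(-1)^{k/2+1}$ in Theorem~\ref{thm:maass-green}.

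The same exponential bound as in part (1) gives locally uniform convergence in $y_zy_w>1$. The excluded case $z-w\in\mathbb Z$ is exactly where the term $D_\ell(q_z/q_w)$ meets the point $1$, where $D_\ell$ is not defined.
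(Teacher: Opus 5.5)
\textbf{Part (2).} Your plan follows the paper's proof. You raise \eqref{zag:H} termwise using $D_z\operatorname{Li}_n(\xi)=r\operatorname{Li}_{n-1}(\xi)$ and $D_w\operatorname{Li}_n(\xi)=s\operatorname{Li}_{n-1}(\xi)$. You then rewrite the real parts through $D_\ell$ using $\log|\xi|=-2\pi(ry_z+sy_w)$, add the Eisenstein term from Theorem~\ref{thm:maass-green}, and extend to $y_zy_w>1$. Like the paper, you leave the single-$\xi$ combinatorial identity as a check to be carried out.

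\textbf{Part (1).} Here you take a different route. The paper starts from the Kloosterman--Bessel formula \eqref{eq:Poincare} and applies Selberg's identity $K(-a,b;c)=\sum_{d\mid(a,b,c)}dK(-ab/d^2,1;c/d)$. This gives $\pi^{(k)}(-a,b)/b^{k-1}=-\sum_{d\mid(a,b)}d^{1-k}\gamma^{(k)}(-1,ab/d^2)$, and the $\sigma_{k-1}$-term is treated separately. You obtain the same multiplicative relation from the Hecke relation \eqref{eq:Hecke-gn} and the coefficient formula $\sum_{d\mid(m,n)}d^{\kappa-1}a(mn/d^2)$ for $T(m)$ in weight $\kappa=2-k$.

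Your route is more elementary. It avoids Kloosterman sums, and \eqref{eq:Hecke-gn} only needs uniqueness of the weakly holomorphic form of weight $2-k$ with principal part $q^{-m}$. It also produces the ranges $s=-1$, $s=0$, $s\ge1$ in one stroke, so \eqref{eq:Poincare} is needed only for the bound $\gamma^{(k)}(-1,n)=O(e^{4\pi\sqrt n})$. The paper's route stays inside the explicit coefficients and does not need \eqref{eq:Hecke-gn}.

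No continuation is needed in (1). Its region is exactly $y_z>\max\{y_w,y_w^{-1}\}$ from Lemma~\ref{lem:fourier-expansions}, and $H_k$ is not single-valued on $y_zy_w>1$ anyway.

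\textbf{A correction in part (2).} The term $-\frac{(2\pi ry_z)^{k-1}+(2\pi sy_w)^{k-1}}{k-1}$ has nothing to do with the Eisenstein term, which enters only through Theorem~\ref{thm:maass-green}. For each single $\xi$, it cancels the sum of the polynomial pieces $\frac{\log^\ell|\xi|}{2\,\ell!}$ of the $D_\ell(\xi)$ in the bracket. For $k=4$, with $u=2\pi ry_z$ and $v=2\pi sy_w$,
\begin{align*}
2D_3(\xi)-(u^2+v^2)D_1(\xi)-\frac{u^3+v^3}{3}
&=2\Re\bigl(\operatorname{Li}_3(\xi)+(u+v)\operatorname{Li}_2(\xi)+uv\operatorname{Li}_1(\xi)\bigr)\\
&=8\pi^2y_zy_w\,\Re\bigl(\mathcal R^{-2}_{[1],z}\mathcal R^{-2}_{[1],w}\operatorname{Li}_3(\xi)\bigr).
\end{align*}
This cancellation is what actually justifies your claim of locally uniform convergence in $y_zy_w>1$. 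Each bracket is then $O(\mathrm{poly}(r,s)\,|\xi|)$, whereas the individual $D_\ell(\xi)$ grow polynomially in $r,s$. Without the cancellation, the sums over $s\ge0$ would diverge.

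The display also fixes the sign. The termwise identity is for $2(-1)^{k/2}\Re(\cdots)$, so the factor $2(-1)^{k/2+1}$ of Theorem~\ref{thm:maass-green} must be combined with the minus sign in front of the sum in \eqref{zag:H}. The paper's intermediate display drops this sign, although \eqref{zag:G} itself has the correct one: it gives $G_{k/2}\sim 2\log|z-w|$ on the diagonal.
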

	\begin{proof}
		\begin{enumerate}
			\item We first prove the formula for $H_k$. The Selberg identity
			$K(-a,b;c)=\sum_{d\mid(a,b,c)}dK(-ab/d^2,1;c/d)$
			implies the following relation between the coefficients of the Poincaré series:
			$$
			\frac{\pi^{(k)}(-a,b)}{b^{k-1}}
			=-\sum_{d\mid(a,b)}\frac{\gamma^{(k)}(-1,ab/d^2)}{d^{k-1}}.
			$$
			Substituting this identity into the double Fourier series for $H_k$ and writing $a=dr$, $b=ds$, we obtain the sum
			$\sum_{d\ge1}(q_z^rq_w^s)^d/d^{k-1}
			=\operatorname{Li}_{k-1}(q_z^rq_w^s)$.
			Similarly,
			$\sum_{n\ge1}\sigma_{k-1}(n)q_z^n/n^{k-1}
			=\sum_{r\ge1}\operatorname{Li}_{k-1}(q_z^r)$.
			This proves the first assertion.
			
			\item We now apply the Maass raising operators in both variables to \eqref{zag:H}. For $\xi=q_z^rq_w^s$, we have
			$D_z\operatorname{Li}_n(\xi)=r\operatorname{Li}_{n-1}(\xi)$ and
			$D_w\operatorname{Li}_n(\xi)=s\operatorname{Li}_{n-1}(\xi)$.
			The resulting expression is a linear combination of the real parts of the polylogarithms $\operatorname{Li}_j(\xi)$, with coefficients depending on $y_z$ and $y_w$. Since
			$\log|\xi|=-2\pi(ry_z+sy_w)$, the definition of $D_\ell$ allows us to write this combination as
			$$
			\begin{aligned}
				&2(-1)^{k/2+1}
				\Re\left(
				\mathcal R^{2-k}_{[k/2-1],z}
				\mathcal R^{2-k}_{[k/2-1],w}
				\operatorname{Li}_{k-1}(\xi)
				\right)\\
				&=\frac{1}{(4\pi^2y_zy_w)^{k/2-1}}
				\Bigg[
				\frac{(-1)^{k/2}}{2^{k-3}}
				\sum_{\substack{a,b\ge0\\a+b\le k/2-1}}
				\frac{(-1)^{a+b}(k-2-2a)!(k-2-2b)!}
				{a!b!(k/2-1-a)!(k/2-1-b)!}\\
				&\hspace{30mm}\times(2\pi r y_z)^{2a}(2\pi s y_w)^{2b}
				D_{k-1-2a-2b}(\xi)
				-\frac{(2\pi r y_z)^{k-1}+(2\pi s y_w)^{k-1}}{k-1}
				\Bigg].
			\end{aligned}
			$$
			Substituting this expression into \eqref{zag:H} and applying Theorem~\ref{thm:maass-green}, we obtain \eqref{zag:G} for $y_z>y_w$ and $y_zy_w>1$.
			
			It remains to extend \eqref{zag:G} to the full region $y_zy_w>1$. For $r,s\ge1$, the coefficients satisfy the standard estimate $|\gamma^{(k)}(-1,rs)|\le Ce^{4\pi\sqrt{rs}}$, while on every compact subset of this region there exists $\delta>0$ such that
			$ry_z+sy_w-2\sqrt{rs}\ge\delta(r+s)$.
			The polynomial terms in $D_\ell(\xi)$ combine to
			$\bigl((2\pi r y_z)^{k-1}+(2\pi s y_w)^{k-1}\bigr)/(k-1)$
			and therefore cancel with the last term in the square brackets in \eqref{zag:G}. The remaining part decays exponentially, and hence the double series converges uniformly on compact subsets. Finally, since $D_\ell$ is single-valued, the term corresponding to $(r,s)=(1,-1)$ extends to the region $z-w\notin\mathbb Z$. This completes the proof of the second assertion.
		\end{enumerate}
	\end{proof}
	\section{Eichler periods and the normalized CM invariant}
	\label{sec:zagier-normalization}
	
	The aim of this section is to construct a complex-valued CM invariant from the Eichler integral $H_k(z,w)$, following Zagier's construction \cite[\S7]{GZ00}. Unlike the classical Green function, $H_k$ is modular in the variable $z$ only up to a period polynomial of degree at most $k-2$. Consequently, its value at a CM point $z_Q$ depends on the choice of representative $Q$ of the corresponding quadratic-form class, and this dependence persists after applying Maass raising operators.
	
	We first compute the Eichler periods $\Pi_j^{(k)}(w)$ and their images under Maass raising operators. Using these periods, we choose a normalization $H_k^\bullet$ for which the corresponding complex-valued extension $\mathcal G_{k/2}^\bullet$ has real part $\frac12(-1)^{k/2+1}G_{k/2}$. We show that the normalized CM value $D_Q^{(k-2)/4}\mathcal G_{k/2}^\bullet(z_Q,w)$ depends only on the class of $Q$ modulo an explicitly determined period module. If the second variable is also a CM point $w_R$, multiplication by $D_R^{(k-2)/4}$ maps this module into a discrete subgroup of $2\pi i\mathbb Q$.

	\subsection{The period cocycle}
	\label{subsec:period-cocycle}
	
	Fix $w\in\mathfrak H$ and assume that the orbit $\Gamma w$ does not intersect the positive imaginary axis. Set $h_w^{(k)}(z):=H_k(z,w)$ and $C_k:=(2\pi i)^{k-1}/(k-2)!$. Define the Eichler periods by
	$$
	\refstepcounter{equation}\label{eq:Pi-periods}
	\Pi_j^{(k)}(w):=\int_0^{i\infty}u^jF_k(u,w)du,\quad0\le j\le k-2.\eqno(\theequation)
	$$
	The integral is taken along the positive imaginary axis. Let $S\mapsto-1/z$ and $T\mapsto z+1$. We first derive the transformation formulas for $S$ with $z\in i\mathbb R_{>0}$ and for $T$ with $\Im z$ sufficiently large.
	
	Since $F_k$ is $T$-invariant, we have
	$$
	\refstepcounter{equation}\label{eq:T-transformation-hw}
	h_w^{(k)}(z+1)=h_w^{(k)}(z).\eqno(\theequation)
	$$
	The substitution $u=-1/v$ gives
	$$
	z^{k-2}h_w^{(k)}(-1/z)-h_w^{(k)}(z)=C_k\int_0^{i\infty}(z-u)^{k-2}F_k(u,w)du.
	$$
	Expanding $(z-u)^{k-2}$, we obtain
	$$
	\refstepcounter{equation}\label{eq:S-transformation-hw}
	z^{k-2}h_w^{(k)}(-1/z)=h_w^{(k)}(z)+C_k\sum_{j=0}^{k-2}(-1)^j\binom{k-2}{j}z^{k-2-j}\Pi_j^{(k)}(w).\eqno(\theequation)
	$$
	Thus, the coefficients of the polynomial appearing in the $S$-transformation formula are expressed in terms of the periods $\Pi_j^{(k)}(w)$. The identity $F_k(u,w+1)=F_k(u,w)$ gives
	$$
	\refstepcounter{equation}\label{eq:T-invariance-Pi}
	\Pi_j^{(k)}(w+1)=\Pi_j^{(k)}(w),\eqno(\theequation)
	$$
	while the substitution $u=-1/v$ gives
	$$
	\refstepcounter{equation}\label{eq:Pi-reflection}
	\Pi_{k-2-j}^{(k)}(w)=(-1)^{j+1}\Pi_j^{(k)}(w),\quad0\le j\le k-2.\eqno(\theequation)
	$$
	In particular, when $k\equiv2\pmod4$, the middle period vanishes.
	
	Let $\gamma=\begin{psmallmatrix}a&b\\c&d\end{psmallmatrix}\in\Gamma$. Choose a word in the generators $S$ and $T$ representing $\gamma$, and analytically continue $h_w^{(k)}$ along the corresponding path. The transformation formulas for $S$ and $T$ imply that
	$(cz+d)^{k-2}h_w^{(k)}(\gamma z)-h_w^{(k)}(z)$
	is a polynomial in $z$ of degree at most $k-2$, with coefficients in
	$$
	C_k\operatorname{span}_{\mathbb Q}\{\Pi_0^{(k)}(w),\ldots,\Pi_{k-2}^{(k)}(w)\}.
	$$
	
	\subsection{Computation of the periods and the Maass-raised periods}
	\label{subsec:zagier-periods}
	\label{subsection:raised-eichler-periods}
	
	In this subsection we find the periods $\Pi_j^{(k)}(w)$ and their images under Maass raising operators. Similar periods of meromorphic modular forms were studied by Löbrich and Schwagenscheidt \cite{LoebrichSchwagenscheidt} in connection with locally harmonic Maass forms.
	
	To compute these periods, we apply the residue theorem to a closed contour obtained from the path $[0,i\infty]$ and its two modular images. Let $\Delta_\sigma$ be the open ideal triangle with vertices $i\infty,0,-1$, where $\sigma:=ST=\begin{psmallmatrix}0&-1\\1&1\end{psmallmatrix}$. Its sides lie on the lines $\Re t=0$, $\Re t=-1$, and the circle $|t+1/2|=1/2$. We orient the boundary clockwise, along the sides $[i\infty,0]$, $[0,-1]$, $[-1,i\infty]$.
	
	\begin{wrapfigure}{l}{0.35\textwidth}
		\centering
		\begin{tikzpicture}[scale=1.7]
			\draw (-1,0)--(-1,2.2);
			\draw (0,0)--(0,2.2);
			\draw (-1,0) arc (180:0:0.5);
			
			\draw[->] (-1,0.8)--(-1,1.3);
			\draw[->] (0,1.3)--(0,0.8);
			\draw[->] (-0.25,0.07) arc (30:150:0.29);
			
			\fill (-0.42,1.15) circle (0.025) node[right] {$w$};
			\fill (-0.28,0.5) circle (0.025) node[above] {$\sigma w$};
			\fill (-0.82,0.72) circle (0.025) node[above] {$\sigma^2w$};
			
			\node at (-0.5,1.55) {$\Delta_\sigma$};
			\node[below] at (-1,0) {$-1$};
			\node[below] at (0,0) {$0$};
			\node[above] at (-1,2.2) {$i\infty$};
		\end{tikzpicture}
		\caption{The triangle $\Delta_\sigma$.}
		\label{fig:delta-sigma}
	\end{wrapfigure}
	
	The transformation $\sigma(t)=-1/(t+1)$ cyclically permutes the sides of $\Delta_\sigma$. Hence each boundary integral can be expressed in terms of the periods $\Pi_j^{(k)}(w)$, while the integral over the closed contour can be evaluated by the residue theorem.
	
	For this calculation, we may choose $w\in\Delta_\sigma$, provided that $\Gamma w$ does not meet the positive imaginary axis. Indeed, let $v\in\Gamma w$ lie in the standard fundamental domain $|\Re v|\le1/2$, $|v|\ge1$. Then $\Re v\ne0$, and either $v$ or $v-1$ lies in $\Delta_\sigma$. Thus, for a CM point satisfying this assumption, we fix a representative $R$ of the corresponding quadratic-form class such that $w_R\in\Delta_\sigma$.
	
	\begin{proposition}
		\label{thm:Pi-closed-form-general}
		Let $w\in\Delta_\sigma$ and $w\notin\Gamma\cdot{i,\rho}$. Then
		$$
		\refstepcounter{equation}\label{eq:contour-identity-general}
		\begin{gathered}
			\sum_{j=0}^{k-2}\binom{k-2}{j}\Pi_j^{(k)}(w)
			\left((-1)^jz^{k-2-j}+(z+1)^{k-2-j}z^j-(-1)^j(z+1)^{k-2-j}\right)\\
			=(z-w)^{k-2}+\bigl((w+1)z+1\bigr)^{k-2}+\bigl(wz+w+1\bigr)^{k-2}.
		\end{gathered}\eqno(\theequation)
		$$
	\end{proposition}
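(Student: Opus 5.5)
The plan is to integrate the polynomial-valued kernel $P_z(t):=(z-t)^{k-2}F_k(t,w)$ around the boundary of $\Delta_\sigma$, oriented clockwise as in the figure, and to evaluate this integral in two ways. First we use the residue theorem. Inside $\Delta_\sigma$, the only poles of $F_k(\cdot,w)$ are the points of $\Gamma w$ lying in the triangle. Because $\Delta_\sigma$ is a union of three translates of a fundamental domain under $\langle\sigma\rangle$, these points are $w$, $\sigma w$ and $\sigma^2w$; the hypothesis $w\notin\Gamma\cdot\{i,\rho\}$ makes them distinct and keeps the poles simple. By the proof of Proposition~\ref{thm:diagonal-logarithm}, $F_k(t,w)=\frac{1}{2\pi i(t-w)}+O(1)$. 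To get the residue at $t=\gamma w$ we use weight-$k$ invariance in $t$. Writing $t=\gamma s$ gives $F_k(\gamma s,w)\,d(\gamma s)=(cs+d)^{k-2}F_k(s,w)\,ds$, so the residue of $P_z(t)\,dt$ at $\gamma w$ equals the residue at $s=w$ of $(z-\gamma s)^{k-2}(cs+d)^{k-2}F_k(s,w)$. This is $\frac{1}{2\pi i}\bigl((cw+d)z-(aw+b)\bigr)^{k-2}$. For $\gamma=\sigma$ and $\gamma=\sigma^2=\begin{psmallmatrix}-1&-1\\1&0\end{psmallmatrix}$ this yields $((w+1)z+1)^{k-2}$ and $(wz+w+1)^{k-2}$, which, together with $(z-w)^{k-2}$, are exactly the three terms on the right-hand side. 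The clockwise orientation contributes an overall sign of $-2\pi i\cdot\frac{1}{2\pi i}=-1$, and the normalization of the left side has to match this.

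Second, we compute the boundary integral side by side. The side $[i\infty,0]$ gives directly $-\int_0^{i\infty}(z-u)^{k-2}F_k(u,w)\,du=-\sum_j\binom{k-2}{j}(-1)^jz^{k-2-j}\Pi_j^{(k)}(w)$. For the other two sides we note that $\sigma$ maps $[i\infty,0]$ to $[0,-1]$, and $\sigma^2$ maps it to $[-1,i\infty]$. Pulling back by $t=\sigma u$ (respectively $t=\sigma^2u$) and using the same cocycle identity, each integral becomes $\int_0^{i\infty}$ of $F_k(u,w)$ against the polynomial $\bigl((cu+d)z-(au+b)\bigr)^{k-2}$. For $\sigma$ this polynomial is $((u+1)z+1)^{k-2}=\bigl(u z+(z+1)\bigr)^{k-2}$. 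Expanding binomially in $u$ produces $\sum_j\binom{k-2}{j}z^j(z+1)^{k-2-j}\Pi_j^{(k)}(w)$. For $\sigma^2$ it is $(uz+u+1)^{k-2}=\bigl(u(z+1)+1\bigr)^{k-2}$, which gives terms $(z+1)^j$ times $\Pi_j^{(k)}(w)$. We then apply the reflection \eqref{eq:Pi-reflection} with $j\mapsto k-2-j$ to convert these into $(-1)^j(z+1)^{k-2-j}\Pi_j^{(k)}(w)$. With the orientation signs, the three contributions assemble into the left-hand side of \eqref{eq:contour-identity-general}, and equating with the residue evaluation gives the claim.

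The main obstacle is analytic justification together with sign bookkeeping. The contour passes through the cusps $i\infty$, $0$ and $-1$, so we must check convergence of the boundary integrals there. $F_k(t,w)$ decays like $q_t$ at $i\infty$, since the expansion in Lemma~\ref{lem:fourier-expansions} has no constant term in $q_z$, and hence the periods converge. To make the residue argument rigorous, we will truncate the triangle by a horocycle at height $T$ and by its $\sigma$-images near $0$ and $-1$. The three horocyclic arcs are permuted by $\sigma$ and their contributions tend to $0$ by the same decay. The remaining care lies in the orientation of each transported side, in the sign of each residue, and in the single use of \eqref{eq:Pi-reflection}. Since a single sign slip would spoil the identity, I would verify the final signs by checking the constant terms at $z=0$ for $k=4$.
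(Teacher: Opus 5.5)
Your proposal is correct and follows the paper's argument: you integrate $(z-t)^{k-2}F_k(t,w)$ clockwise around $\partial\Delta_\sigma$, evaluate it by residues at $w,\sigma w,\sigma^2 w$, and transport each side back to $[0,i\infty]$. The only real difference is on the side $[-1,i\infty]$: the paper uses the translation $t=s-1$ and $T$-periodicity, which gives the terms $(-1)^j(z+1)^{k-2-j}$ directly, whereas your use of $\sigma^2$ needs one application of \eqref{eq:Pi-reflection}. Your signs work out, and your horocycle truncation supplies a justification of convergence at the cusps that the paper leaves implicit.
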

	
	\begin{proof}
		We evaluate the integral around the boundary of $\Delta_\sigma$ in two ways.
		The identities $j'(t)/(2\pi i)=-E_{14}(t)/\Delta(t)$ and
		$E_{14}=E_kE_{14-k}$ imply that, for
		$\gamma=\begin{psmallmatrix}a&b\\c&d\end{psmallmatrix}\in\Gamma$,
		$\Res_{t=\gamma w}F_k(t,w)=\frac{(cw+d)^{k-2}}{2\pi i}.$
		The three points of the orbit $\Gamma w$ lying inside $\Delta_\sigma$ are
		$w,\sigma w,\sigma^2w$.  
		Since the boundary is oriented clockwise, the residue theorem gives
		$$
		\begin{aligned}
			\oint_{\partial\Delta_\sigma}(z-t)^{k-2}F_k(t,w)dt
			={}&-(z-w)^{k-2}\\
			&-\bigl((w+1)z+1\bigr)^{k-2}
			-\bigl(wz+w+1\bigr)^{k-2}.
		\end{aligned}
		$$
		
		On the second and third sides, make the substitutions
		$t=-1/(s+1)$ and $t=s-1$, respectively. Modularity and periodicity of $F_k$ give
		$$
		\begin{aligned}
			\oint_{\partial\Delta_\sigma}(z-t)^{k-2}F_k(t,w)dt
			={}&\int_0^{i\infty}\Bigl(-(z-s)^{k-2}\\
			&-(zs+z+1)^{k-2}+(z+1-s)^{k-2}\Bigr)F_k(s,w)ds.
		\end{aligned}
		$$
		Expanding the powers and comparing the two expressions for the contour integral,
		we obtain \eqref{eq:contour-identity-general}.
	\end{proof}
	
	\begin{example}
		\label{ex:Pi-closed-forms}
		Proposition~\ref{thm:Pi-closed-form-general} gives
		$\Pi_1^{(4)}(w)=w^2+w+1$ and
		$\Pi_1^{(6)}(w)=(w^2+w+1)^2/4$.
		The formulas for $k=8,10,14$ are given in Appendix~\ref{appendix:explicit-periods}.
	\end{example}
	
	Since the $q$-expansions of $E_k$, $\Delta$, and $j$ have real coefficients,
	$\overline{F_k(z,w)}=F_k(-\bar z,-\bar w)$. Therefore,
	$$
	\refstepcounter{equation}\label{eq:Pi-conjugation}
	\overline{\Pi_j^{(k)}(w)}=(-1)^{j+1}\Pi_j^{(k)}(-\bar w).\eqno(\theequation)
	$$

	To pass from $H_k$ to $\mathcal G_{k/2}$, we apply the Maass raising operators to these periods. Define
	$$
	\refstepcounter{equation}\label{definition:raised-eichler-periods}
	\widehat\Pi_j^{(k)}(w):=\mathcal R^{2-k}_{[k/2-1],w}\Pi_j^{(k)}(w),
	\quad0\le j\le k-2.
	\eqno(\theequation)
	$$
	The reflection relation \eqref{eq:Pi-reflection} remains valid after raising:
	$$
	\refstepcounter{equation}\label{relation:raised-periods-reflection}
	\widehat\Pi_{k-2-j}^{(k)}(w)=(-1)^{j+1}\widehat\Pi_j^{(k)}(w),\quad0\le j\le k-2.
	\eqno(\theequation)
	$$
	
	\begin{proposition}
		\label{theorem:raised-periods-real-structure}
		Let $k\in S$, $d\in\mathbb Z$, $d>3$, and $w_*=(1+i\sqrt d)/2$. Then
		$$
		\refstepcounter{equation}\label{relation:raised-periods-conjugation}
		\overline{\widehat\Pi_j^{(k)}(w_*)}
		=(-1)^{j+1}\widehat\Pi_j^{(k)}(w_*),
		\quad0\le j\le k-2.
		\eqno(\theequation)
		$$
		Moreover, for $1\le j\le k-3$,
		$$
		\refstepcounter{equation}\label{relation:raised-periods-algebraicity}
		\widehat\Pi_j^{(k)}(w_*)\in
		\begin{cases}
			\pi^{-(k-2)/2}\mathbb Q(\sqrt d),&j\text{ odd},\\
			i\pi^{-(k-2)/2}\mathbb Q(\sqrt d),&j\text{ even}.
		\end{cases}
		\eqno(\theequation)
		$$
	\end{proposition}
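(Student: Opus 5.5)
The plan is to prove the two assertions separately. The conjugation relation \eqref{relation:raised-periods-conjugation} follows from the symmetry \eqref{eq:Pi-conjugation} together with $T$-invariance. The algebraicity \eqref{relation:raised-periods-algebraicity} follows from the closed-form identity of Proposition~\ref{thm:Pi-closed-form-general} combined with the explicit shape of the Maass raising operator. Throughout, the representative we work with is $w_0:=w_*-1=(-1+i\sqrt d)/2$. Since $|w_0+1/2|=\sqrt d/2>1/2$ and $\Re w_0=-1/2$, this point lies in $\Delta_\sigma$. For $d>3$ it is not in $\Gamma\cdot\{i,\rho\}$, and its orbit avoids the positive imaginary axis, so all earlier results apply near $w_0$. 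The key observation is $-\bar w_0=w_0+1=w_*$.

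For the conjugation relation, consider the involution $f\mapsto f^\sharp$, $f^\sharp(w):=\overline{f(-\bar w)}$. The map $w\mapsto-\bar w$ preserves $y=\Im w$. Writing $f(w)=f(x+iy)$, one checks directly that $D(f^\sharp)=(Df)^\sharp$; the sign from $\partial/\partial x$ cancels against the conjugation of $(2\pi i)^{-1}$. Also $\bigl(\tfrac{\kappa}{4\pi y}f\bigr)^\sharp=\tfrac{\kappa}{4\pi y}f^\sharp$. Hence each $\partial_\kappa$ commutes with $\sharp$, and therefore so does $\mathcal R^{2-k}_{[k/2-1]}$. Now \eqref{eq:Pi-conjugation} says $\Pi_j^\sharp=(-1)^{j+1}\Pi_j$ as real-analytic functions near $w_*$; this holds on the open set where both sides are defined, so it may be differentiated. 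Applying the raising operator gives $\overline{\widehat\Pi_j(-\bar w)}=(-1)^{j+1}\widehat\Pi_j(w)$. Next, raising commutes with $w\mapsto w+1$, so \eqref{eq:T-invariance-Pi} gives $\widehat\Pi_j(w+1)=\widehat\Pi_j(w)$. Evaluating at $w=w_*$ and using $-\bar w_*=w_0=w_*-1$ then yields \eqref{relation:raised-periods-conjugation}.

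For algebraicity, the first step is to extract from \eqref{eq:contour-identity-general} that, on $\Delta_\sigma$, each $\Pi_j^{(k)}$ with $1\le j\le k-3$ is a polynomial $P_j(w)\in\mathbb Q[w]$ of degree $\le k-2$. Example~\ref{ex:Pi-closed-forms} and the appendix illustrate this. To do so, compare the coefficients of $z^m$ on both sides. The right-hand side has coefficients in $\mathbb Q[w]$. The left-hand side is a $\mathbb Q$-linear combination of the $\Pi_j$. After imposing the reflection relations \eqref{eq:Pi-reflection}, one must check that this linear system has full rank in the unknowns $\Pi_1,\dots,\Pi_{k-3}$. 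Doing this uniformly in $k$ is the main obstacle; I expect it only for $j\neq0,k-2$, which is why the statement excludes the endpoints. Since $k\in S$ is a finite list, I would verify the rank case by case, and the resulting formulas are the ones recorded in Appendix~\ref{appendix:explicit-periods}. Because $\widehat\Pi_j$ is $T$-invariant, we may evaluate it at $w_0$ with $\Pi_j=P_j$ near $w_0$.

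Now expand $\mathcal R^{2-k}_{[k/2-1]}P_j$ as a sum of terms
$$(2\pi i)^{-a}(4\pi y)^{-b}\,c_{a,b}\,P_j^{(a)}(w),\qquad a+b=k/2-1,\ c_{a,b}\in\mathbb Q.$$
At $w_0$ we have $y=\sqrt d/2$, and $P_j^{(a)}(w_0)\in\mathbb Q(i\sqrt d)\subset\mathbb Q(\sqrt d)+i\mathbb Q(\sqrt d)$. Hence
$$\widehat\Pi_j(w_*)\in\pi^{-(k-2)/2}\bigl(\mathbb Q(\sqrt d)+i\,\mathbb Q(\sqrt d)\bigr).$$
Since $\pi$ is real, \eqref{relation:raised-periods-conjugation} forces $\pi^{(k-2)/2}\widehat\Pi_j(w_*)$ to be real for odd $j$ and purely imaginary for even $j$. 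Intersecting with $\mathbb Q(\sqrt d)+i\mathbb Q(\sqrt d)$, where $\sqrt d\in\mathbb R$, gives exactly \eqref{relation:raised-periods-algebraicity}.
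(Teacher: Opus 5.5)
Your proposal is correct and follows the paper's proof step for step. The conjugation relation comes from \eqref{eq:Pi-conjugation}, the fact that raising commutes with $f\mapsto\overline{f(-\bar w)}$, and $T$-invariance at $-\bar w_*=w_*-1$. The algebraicity uses the rational polynomial formulas for $\Pi_j^{(k)}$ on $\Delta_\sigma$: raising at $w_*-1$ lands in $\pi^{-(k-2)/2}\mathbb Q(i,\sqrt d)$, and the conjugation relation then isolates the real or imaginary part. You treat all $1\le j\le k-3$ at once rather than reducing to $j\le\lfloor (k-2)/2\rfloor$ via reflection, which is immaterial.

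The rank check you defer is something the paper takes for granted in Appendix~\ref{appendix:explicit-periods}, and it holds uniformly without case-by-case work. The homogeneous system consists of the period relations under $S$ and $\sigma$ for $r(z)=\sum_j(-1)^j\binom{k-2}{j}z^{k-2-j}\Pi_j^{(k)}$. By Eichler--Shimura its solution space has dimension $2\dim S_k+1=1$, spanned by $z^{k-2}-1$, which is exactly the undetermined $\Pi_0^{(k)}$ direction.
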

	
	\begin{proof}
		Equation \eqref{eq:Pi-conjugation} and the definition of the raised periods give
		$\overline{\widehat\Pi_j^{(k)}(w)}=(-1)^{j+1}\widehat\Pi_j^{(k)}(-\bar w).$
		Since $-\bar w_*=w_*-1$ and the raised periods are $T$-invariant, we obtain
		\eqref{relation:raised-periods-conjugation}.
		
		For $1\le j\le\lfloor(k-2)/2\rfloor$, the period $\Pi_j^{(k)}(w)$ is a polynomial in $w$ with rational coefficients on $\Delta_\sigma$. After applying the raising operators and substituting
		$w_*-1=-\frac12+\frac{i\sqrt d}{2}$, we obtain
		$$
		\widehat\Pi_j^{(k)}(w_*)
		\in\pi^{-(k-2)/2}\mathbb Q(i,\sqrt d).
		$$
		By \eqref{relation:raised-periods-conjugation}, this value is real for odd $j$ and purely imaginary for even $j$. Hence \eqref{relation:raised-periods-algebraicity} holds for $1\le j\le\lfloor(k-2)/2\rfloor$. For the remaining indices, the result follows from
		$\widehat\Pi_{k-2-j}^{(k)}(w)=(-1)^{j+1}\widehat\Pi_j^{(k)}(w)$.
	\end{proof}
	
	In particular, for $k=4$,
	$\widehat\Pi_1^{(4)}(w_*)=\frac{d+3}{4\pi\sqrt d}.$
	Explicit formulas for the other weights are given in Appendix~\ref{appendix:explicit-raised-periods}.
	
	The contour identity \eqref{eq:contour-identity-general} does not determine the period $\Pi_0^{(k)}$. For our purposes, it is enough to compute the imaginary part of the raised period $\widehat\Pi_0^{(k)}$. We obtain it by comparing the $S$-transformation formula for $\mathcal G_{k/2}$ with the relation between $\Re\mathcal G_{k/2}$ and the higher Green function $G_{k/2}$.
	
	\begin{theorem}
		\label{theorem:raised-Pi0-Eisenstein}
		Let $k\in S$ and $w\in\Delta_\sigma\setminus\Gamma\cdot{i,\rho}$. Then
		$$
		\refstepcounter{equation}\label{formula:raised-Pi0-Eisenstein}
		\Im\widehat\Pi_0^{(k)}(w)=-\frac{\left(\frac{k}{2}-1\right)!}{(k-1)\pi^{(k-2)/2}}E\left(w,\frac{k}{2}\right).\eqno(\theequation)
		$$
	\end{theorem}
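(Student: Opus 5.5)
Following the hint in the text, I will compare the $S$-transformation of $\mathcal G_{k/2}$ with the modularity of $G_{k/2}$, and read off the constant contributed by $\Pi_0^{(k)}$.

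\textbf{Step 1: raise the $S$-transformation.} Start from \eqref{eq:S-transformation-hw} for $z\in i\mathbb R_{>0}$. Apply $\mathcal R^{2-k}_{[k/2-1],w}$ to both sides; in $w$ this just replaces $H_k$ by its raised version and $\Pi_j^{(k)}$ by $\widehat\Pi_j^{(k)}$. Then apply $\mathcal R^{2-k}_{[k/2-1],z}$. The left side $z^{k-2}h(-1/z)$ is the weight-$(2-k)$ slash of $h$ by $S$. Maass raising commutes with slash operators, so it becomes $\mathcal G_{k/2}(-1/z,w)$, which has weight $0$ after raising. On the right, use the monomial formula $\mathcal R^{2-k}_{[k/2-1]}z^{m}$, computed like the action on $q^r$. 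This gives
$$\mathcal G_{k/2}(-1/z,w)-\mathcal G_{k/2}(z,w)=C_k\sum_{j}(-1)^j\binom{k-2}{j}\widehat\Pi_j^{(k)}(w)\,\mathcal R^{2-k}_{[k/2-1],z}z^{k-2-j}.$$
The key point is that the raised polynomials $\mathcal R_{[k/2-1]}z^{m}$ for $0\le m\le k-2$ are $y^{1-k/2}$ times explicit polynomials in $x,y$. The extreme terms $m=k-2$ and $m=0$ give $c\,z^{k/2-1}\bar z^{k/2-1}y^{1-k/2}$ and a constant times $y^{1-k/2}$ respectively.

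\textbf{Step 2: take real parts.} Multiply by $2(-1)^{k/2+1}$ and take real parts. By Theorem~\ref{thm:maass-green}, and because $G_{k/2}$ is $S$-invariant in $z$, the left side equals
$$-\frac{4\pi}{1-k}E(w,k/2)\left(\Im(-1/z)^{1-k/2}-y^{1-k/2}\right).$$
On the imaginary axis $z=iy$ this is $-\frac{4\pi}{1-k}E(w,k/2)\,(y^{k/2-1}-y^{1-k/2})$. Now compare with the right side at $z=iy$, viewed as a Laurent polynomial in $y$. The coefficient of $y^{1-k/2}$ comes only from the $j=k-2$ term, $m=0$. Using the reflection relation \eqref{relation:raised-periods-reflection}, it can be matched with $j=0$, which carries the coefficient of $y^{k/2-1}$. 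In each case only $\widehat\Pi_0^{(k)}$ or $\widehat\Pi_{k-2}^{(k)}=-\widehat\Pi_0^{(k)}$ contributes.

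\textbf{Step 3: extract the imaginary part.} For $k\equiv 2 \pmod 4$ and for $k\equiv 0\pmod 4$, $C_k=(2\pi i)^{k-1}/(k-2)!$ is $i^{k-1}$ times a real number. Together with the real constant $\mathcal R^{2-k}_{[k/2-1]}1=\frac{(k-2)!}{(k/2-1)!(4\pi)^{k/2-1}}y^{1-k/2}$ (read off from the lemma on $g_{-m}$), the real part of that coefficient is a real multiple of $\Im\widehat\Pi_0^{(k)}(w)$. Equating with $\frac{4\pi}{1-k}E(w,k/2)$ and simplifying $(2\pi)^{k-1}/(4\pi)^{k/2-1}$ and the sign $(-1)^{k/2+1}i^{k-1}$ should give \eqref{formula:raised-Pi0-Eisenstein}.

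\textbf{Main obstacle.} The difficulty is bookkeeping rather than ideas. First, I must check that the other monomials $z^{m}$ with $0<m<k-2$ contribute no pure $y^{\pm(k/2-1)}$ terms that mix with the extremes. I expect they do not, since on $z=iy$ the raised monomial $z^m$ is $y^{1-k/2}$ times a homogeneous expression of degree $m$. Second, I must verify that the analytic continuation used in Theorem~\ref{thm:maass-green} is valid on the whole imaginary axis, which holds because $\Gamma w$ avoids it for $w\in\Delta_\sigma$. Consistency can be checked on $k=4$ using the known Fourier constant term of $G_{k/2}$.
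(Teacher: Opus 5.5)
Your proposal is correct and follows the paper's proof. It applies the raising operators in both variables to the $S$-transformation, restricts to $z=iy$, and uses Theorem~\ref{thm:maass-green} together with the $S$-invariance of $G_{k/2}$ to get $\frac{4\pi}{k-1}E(w,k/2)\bigl(y^{k/2-1}-y^{1-k/2}\bigr)$; it then matches an extreme power of $y$, which only $\widehat\Pi_0^{(k)}$ (or $\widehat\Pi_{k-2}^{(k)}=-\widehat\Pi_0^{(k)}$) contributes. The paper instead compares leading asymptotics as $y\to\infty$ using $\mathcal R^{2-k}_{[k/2-1]}z^{k-2}$, whereas you observe that each raised monomial is a single power of $y$ on the imaginary axis, so either extreme coefficient can be read off exactly. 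The constants do reduce to the stated formula.
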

	
	\begin{proof}
		Apply the operators $\mathcal R^{2-k}_{[k/2-1],z}\mathcal R^{2-k}_{[k/2-1],w}$ to \eqref{eq:S-transformation-hw}. Using
		$$
		\mathcal R^{2-k}_{[k/2-1]}\left(z^{k-2}f(-1/z)\right)=\left(\mathcal R^{2-k}_{[k/2-1]}f\right)(-1/z),
		$$
		we obtain
		$$
		\mathcal G_{k/2}(-1/z,w)-\mathcal G_{k/2}(z,w)
		=C_k\sum_{j=0}^{k-2}(-1)^j\binom{k-2}{j}\mathcal R^{2-k}_{[k/2-1],z}z^{k-2-j}\widehat\Pi_j^{(k)}(w).
		$$
		For $z=iy$,
		$$
		\mathcal R^{2-k}_{[k/2-1]}z^{k-2}
		=\frac{(k-2)!}{(\frac{k}{2}-1)!(4\pi)^{k/2-1}}y^{k/2-1},
		$$
		while for $j\ge1$,
		$$
		\mathcal R^{2-k}_{[k/2-1]}z^{k-2-j}=O(y^{k/2-1-j}).
		$$
		Hence, as $y\to\infty$,
		$$
		\Re\bigl(\mathcal G_{k/2}(i/y,w)-\mathcal G_{k/2}(iy,w)\bigr)
		=\frac{2(-1)^{k/2}\pi^{k/2}}{(\frac{k}{2}-1)!}y^{k/2-1}\Im\widehat\Pi_0^{(k)}(w)+O(y^{k/2-2}).
		$$
		On the other hand, Theorem~\ref{thm:maass-green} and the modular invariance of the higher Green function $G_{k/2}$ give
		$$
		\Re\bigl(\mathcal G_{k/2}(i/y,w)-\mathcal G_{k/2}(iy,w)\bigr)
		=\frac{2(-1)^{k/2+1}\pi}{k-1}\left(y^{k/2-1}-y^{1-k/2}\right)E\left(w,\frac{k}{2}\right).
		$$
		Comparing the coefficients of $y^{k/2-1}$ in these two asymptotic formulas gives \eqref{formula:raised-Pi0-Eisenstein}.
	\end{proof}
	
	The values of $E(w,s)$ at points of complex multiplication can be expressed in terms of partial zeta functions. Let $d>3$ be squarefree with $d\equiv3\pmod4$, and set $K=\mathbb Q(\sqrt{-d})$ and $w_*=(1+i\sqrt d)/2$. The point $w_*$ corresponds to the principal ideal class of $K$, and $w_*-1\in\Delta_\sigma$. Since $E(w,s)$ and the raised periods are $T$-invariant, we can apply Theorem~\ref{theorem:raised-Pi0-Eisenstein} at $w_*-1$ to obtain a formula for $\Im\widehat\Pi_0^{(k)}(w_*)$. Using
	$$
	E(w_*,s)=\frac{(d/4)^{s/2}}{\zeta(2s)}\zeta_{K,[1]}(s),
	$$
	where $\zeta_{K,[1]}$ denotes the partial zeta function of the principal ideal class, and Proposition~\ref{theorem:raised-periods-real-structure}, which shows that $\widehat\Pi_0^{(k)}(w_*)$ is purely imaginary, we obtain the following corollary.
	
	\begin{corollary}
		\label{corollary:raised-Pi0-CM}
		Let $k\in S$ and let $d>3$ be squarefree with $d\equiv3\pmod4$. Set
		$K=\mathbb Q(\sqrt{-d})$ and $w_*=(1+i\sqrt d)/2$. Then
		$$
		\refstepcounter{equation}\label{formula:raised-Pi0-CM}
		\widehat\Pi_0^{(k)}(w_*)=-\frac{i\left(\frac{k}{2}-1\right)!}{(k-1)\pi^{(k-2)/2}}
		\frac{(d/4)^{k/4}}{\zeta(k)}
		\zeta_{K,[1]}\left(\frac{k}{2}\right).
		\eqno(\theequation)
		$$
	\end{corollary}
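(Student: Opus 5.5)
The plan is to combine Theorem~\ref{theorem:raised-Pi0-Eisenstein}, the reality statement of Proposition~\ref{theorem:raised-periods-real-structure}, and the stated formula for $E(w_*,s)$ in terms of the partial zeta function.

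\textbf{Step 1: move the point into $\Delta_\sigma$.} Since $w_*-1=(-1+i\sqrt d)/2$ has real part $-1/2$ and $|w_*-1+1/2|=\sqrt d/2>1/2$, the point $w_*-1$ lies in $\Delta_\sigma$. For $d>3$ it is not $\Gamma$-equivalent to $i$ or $\rho$, because its discriminant $-d$ differs from $-4$ and $-3$.

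\textbf{Step 2: check the standing hypothesis on the orbit.} We need that $\Gamma w_*$ does not meet the positive imaginary axis. The reduced representative of the orbit is $w_*$ itself, with real part $1/2\neq0$, and this is exactly the criterion used in the discussion before Proposition~\ref{thm:Pi-closed-form-general}. So Theorem~\ref{theorem:raised-Pi0-Eisenstein} applies at $w_*-1$.

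\textbf{Step 3: transfer the formula back to $w_*$.} The raised periods are $T$-invariant: this follows from \eqref{eq:T-invariance-Pi} together with the fact that Maass raising commutes with translation. The Eisenstein series $E(w,s)$ is also $T$-invariant. Hence Theorem~\ref{theorem:raised-Pi0-Eisenstein} gives
\[
\Im\widehat\Pi_0^{(k)}(w_*)=-\frac{(\frac k2-1)!}{(k-1)\pi^{(k-2)/2}}\,E\!\left(w_*,\frac k2\right).
\]

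\textbf{Step 4: the real part vanishes.} Apply \eqref{relation:raised-periods-conjugation} with $j=0$. This gives $\overline{\widehat\Pi_0^{(k)}(w_*)}=-\widehat\Pi_0^{(k)}(w_*)$, so $\widehat\Pi_0^{(k)}(w_*)$ is purely imaginary and equals $i\Im\widehat\Pi_0^{(k)}(w_*)$.

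\textbf{Step 5: substitute the zeta formula.} Insert $E(w_*,s)=(d/4)^{s/2}\zeta(2s)^{-1}\zeta_{K,[1]}(s)$ at $s=k/2$. This gives exactly \eqref{formula:raised-Pi0-CM}.

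\textbf{Main obstacle.} The only step needing care is the Eisenstein series formula itself, if one wants it verified rather than quoted. For squarefree $d\equiv3\pmod4$, the form $Q=[1,-1,(1+d)/4]$ has root $w_*$ and $\Im w_*=\sqrt{d}/2$. Then
\[
\sum_{\gamma\in\Gamma_\infty\backslash\Gamma}\Im(\gamma w_*)^s=\frac{(\sqrt d/2)^s}{2}\sum_{\gcd(m,n)=1}|mw_*+n|^{-2s}.
\]
The norm satisfies $|mw_*+n|^2=Q(n,m)$. Removing the coprimality condition introduces $\zeta(2s)^{-1}$. The sum over all nonzero pairs, divided by the unit count $2$, is $\zeta_{K,[1]}(s)$, with the class number and units behaving as they should for $d>3$. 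This matches the factor $(d/4)^{s/2}$.
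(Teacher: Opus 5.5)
Your proposal is correct and follows the paper's argument: apply Theorem~\ref{theorem:raised-Pi0-Eisenstein} at $w_*-1\in\Delta_\sigma$, transfer to $w_*$ by $T$-invariance, use \eqref{relation:raised-periods-conjugation} with $j=0$ to see that $\widehat\Pi_0^{(k)}(w_*)$ is purely imaginary, and substitute $E(w_*,s)=(d/4)^{s/2}\zeta(2s)^{-1}\zeta_{K,[1]}(s)$. Two small remarks on your extra checks: in the paper's convention the form $[1,-1,(1+d)/4]$ has root $w_*-1$ rather than $w_*$, which is immaterial here; and in Step~2 you need the converse of the paper's remark (a reduced representative off the imaginary axis implies the orbit avoids the axis), which does hold because $\Gamma\cdot i\mathbb R_{>0}$ consists of Farey edges, and these never enter the open triangle $\Delta_\sigma$.
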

	
	The full formula for the holomorphic period $\Pi_0^{(k)}$ for every $k\in S$ is given in Appendix~\ref{appendix:Pi0}.
	
	\subsection{Normalization of $H_k$}
	\label{subsec:zagier-normalized-monodromy}
	
	In the original normalization of $H_k$, the period $\Pi_0^{(k)}(w)$ appears in the $S$-transformation formula. After applying Maass raising operators, this period gives rise to the non-holomorphic Eisenstein term in $\Re\mathcal G_{k/2}$. Following Zagier \cite[\S7]{GZ00}, we add to $H_k$ a polynomial in $z$ of degree at most $k-2$, chosen so that the resulting function transforms with weight $2-k$ under $S$, while its $T$-transformation involves only the periods $\Pi_j^{(k)}(w)$ with $j\ge1$. Define
	$$
	\refstepcounter{equation}\label{eq:H-bullet-definition}
	H_k^\bullet(z,w):=\frac12\left(H_k(z,w)+z^{k-2}H_k(-1/z,w)\right)-\frac{C_k}{2}(z^{k-2}+1)\Pi_0^{(k)}(w).\eqno(\theequation)
	$$
	Using \eqref{eq:S-transformation-hw}, we can equivalently write
	$$
	\refstepcounter{equation}\label{eq:H-bullet-expanded}
	H_k^\bullet(z,w)=H_k(z,w)-C_k\Pi_0^{(k)}(w)+\frac{C_k}{2}\sum_{j=1}^{k-3}(-1)^j\binom{k-2}{j}z^{k-2-j}\Pi_j^{(k)}(w).\eqno(\theequation)
	$$
	The transformation law of $H_k^\bullet$ under $S$ follows directly from its symmetric definition \eqref{eq:H-bullet-definition}, while the transformation law under $T$ follows from \eqref{eq:H-bullet-expanded} and the $T$-invariance of $H_k$.
	
	\begin{proposition}
		\label{prop:H-bullet-transformations}
		For $k\in S$, we have
		$$
		\refstepcounter{equation}\label{eq:H-bullet-S}
		z^{k-2}H_k^\bullet(-1/z,w)=H_k^\bullet(z,w),\eqno(\theequation)
		$$
		and
		$$
		\refstepcounter{equation}\label{eq:H-bullet-T}
		\begin{gathered}
			H_k^\bullet(z+1,w)-H_k^\bullet(z,w)=\frac{C_k}{2}\sum_{j=1}^{k-3}(-1)^j\binom{k-2}{j}\left((z+1)^{k-2-j}-z^{k-2-j}\right)\Pi_j^{(k)}(w).
		\end{gathered}\eqno(\theequation)
		$$
	\end{proposition}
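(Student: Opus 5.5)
The plan is to derive \eqref{eq:H-bullet-S} directly from the symmetric definition \eqref{eq:H-bullet-definition}, and \eqref{eq:H-bullet-T} from the expanded form \eqref{eq:H-bullet-expanded} together with \eqref{eq:T-transformation-hw}. The only real work is to confirm that the two expressions for $H_k^\bullet$ agree. That agreement is where \eqref{eq:S-transformation-hw} and the reflection relation \eqref{eq:Pi-reflection} enter.

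\emph{Step 1: equivalence of the two forms.} Substitute \eqref{eq:S-transformation-hw} for $z^{k-2}H_k(-1/z,w)$ into \eqref{eq:H-bullet-definition}. This gives
$$H_k^\bullet=H_k+\frac{C_k}{2}\sum_{j=0}^{k-2}(-1)^j\binom{k-2}{j}z^{k-2-j}\Pi_j^{(k)}-\frac{C_k}{2}(z^{k-2}+1)\Pi_0^{(k)}.$$
The $j=0$ term is $\frac{C_k}{2}z^{k-2}\Pi_0^{(k)}$, which cancels the $z^{k-2}$ part of the correction. By \eqref{eq:Pi-reflection} with $j=0$ we have $\Pi_{k-2}^{(k)}=-\Pi_0^{(k)}$. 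Since $k$ is even, the $j=k-2$ term is therefore $\frac{C_k}{2}(-\Pi_0^{(k)})$. Combined with the remaining $-\frac{C_k}{2}\Pi_0^{(k)}$, this yields $-C_k\Pi_0^{(k)}$, which is exactly \eqref{eq:H-bullet-expanded}.

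\emph{Step 2: the $S$-law.} Put $\phi(z):=z^{k-2}H_k(-1/z,w)$. Replacing $z$ by $-1/z$ and multiplying by $z^{k-2}$ gives $z^{k-2}\phi(-1/z)=z^{k-2}(-1/z)^{k-2}H_k(z,w)=H_k(z,w)$, because $k$ is even. So the bracket in \eqref{eq:H-bullet-definition} is invariant under $f\mapsto z^{k-2}f(-1/z)$. The same holds for the correction term, since $z^{k-2}((-1/z)^{k-2}+1)=1+z^{k-2}$. This proves \eqref{eq:H-bullet-S}.

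One point needs care here: $H_k(-1/z,w)$ is defined by analytic continuation. The identity should first be read on the positive imaginary axis, where \eqref{eq:S-transformation-hw} was derived and $S$ preserves the axis. It then extends by analytic continuation, using the standing assumption that $\Gamma w$ avoids that axis. I expect this bookkeeping of branches to be the main (mild) obstacle. The algebra itself is routine.

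\emph{Step 3: the $T$-law.} In \eqref{eq:H-bullet-expanded}, the term $H_k$ is $1$-periodic by \eqref{eq:T-transformation-hw}, and the constant $-C_k\Pi_0^{(k)}$ cancels in the difference. Only the polynomial sum survives, and its difference at $z+1$ and $z$ is precisely the right-hand side of \eqref{eq:H-bullet-T}.

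Again, the identity should be taken for $\Im z$ large, where \eqref{eq:T-transformation-hw} holds literally, and then continued analytically.
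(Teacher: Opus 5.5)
Your proof is correct and follows the paper's route: the $S$-law is read off from the symmetric definition \eqref{eq:H-bullet-definition}, and the $T$-law from the expanded form \eqref{eq:H-bullet-expanded} together with \eqref{eq:T-transformation-hw}. Your Step 1 makes explicit what the paper leaves implicit, namely that passing between the two forms needs the reflection relation $\Pi_{k-2}^{(k)}=-\Pi_0^{(k)}$ to produce the constant $-C_k\Pi_0^{(k)}$, and your branch bookkeeping (on the imaginary axis for $S$, for large $\Im z$ for $T$) is sound.
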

	
	Finally, define the normalized complex-valued extension of the higher Green function by
	$$
	\refstepcounter{equation}\label{eq:G-bullet-definition}
	\mathcal G_{k/2}^\bullet(z,w):=\mathcal R^{2-k}_{[k/2-1],z}\mathcal R^{2-k}_{[k/2-1],w}H_k^\bullet(z,w).\eqno(\theequation)
	$$
	Applying the Maass raising operators in both variables to \eqref{eq:H-bullet-definition} and taking real parts, we obtain the following corollary from Theorems~\ref{thm:maass-green} and \ref{theorem:raised-Pi0-Eisenstein} and the modular invariance of $G_{k/2}$.
	
	\begin{corollary}
		\label{thm:G-bullet-real-part}
		Let $k\in S$ and $w\in\Delta_\sigma\setminus\Gamma\cdot\{i,\rho\}$. Then
		$$
		\refstepcounter{equation}\label{eq:G-bullet-exact-real-part}
		2(-1)^{k/2+1}\Re\mathcal G_{k/2}^\bullet(z,w)=G_{k/2}(z,w).
		\eqno(\theequation)
		$$
	\end{corollary}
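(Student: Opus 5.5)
We will apply $\mathcal R^{2-k}_{[k/2-1],z}\mathcal R^{2-k}_{[k/2-1],w}$ to the symmetric definition \eqref{eq:H-bullet-definition} and treat its three terms separately. Here $C_k$ is a constant and $(z^{k-2}+1)\Pi_0^{(k)}(w)$ factors into a polynomial in $z$ times a function of $w$, so the operators act factor by factor. Using the intertwining identity from the proof of Theorem~\ref{theorem:raised-Pi0-Eisenstein}, $\mathcal R^{2-k}_{[k/2-1]}\bigl(z^{k-2}f(-1/z)\bigr)=(\mathcal R^{2-k}_{[k/2-1]}f)(-1/z)$, we get
$$
\mathcal G_{k/2}^\bullet(z,w)=\tfrac12\bigl(\mathcal G_{k/2}(z,w)+\mathcal G_{k/2}(-1/z,w)\bigr)-\tfrac{C_k}{2}\Bigl(\mathcal R^{2-k}_{[k/2-1],z}(z^{k-2}+1)\Bigr)\widehat\Pi_0^{(k)}(w).
$$

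For the first bracket, Theorem~\ref{thm:maass-green} together with $G_{k/2}(-1/z,w)=G_{k/2}(z,w)$ gives
$$
2(-1)^{k/2+1}\Re\tfrac12\bigl(\mathcal G_{k/2}(z,w)+\mathcal G_{k/2}(-1/z,w)\bigr)=G_{k/2}(z,w)-\tfrac{2\pi}{1-k}\bigl(y^{1-k/2}+\Im(-1/z)^{1-k/2}\bigr)E(w,k/2).
$$
Since $\Im(-1/z)=y/|z|^2$, the correction equals $-\frac{2\pi}{1-k}E(w,k/2)\,y^{1-k/2}\bigl(1+|z|^{k-2}\bigr)$.

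For the polynomial term, compute $\mathcal R^{2-k}_{[k/2-1]}1$ and $\mathcal R^{2-k}_{[k/2-1]}z^{k-2}$ for general $z$, not only on the imaginary axis. The cleanest route is the intertwining identity applied to $f=1$: it shows $\mathcal R(z^{k-2})(z)=(\mathcal R 1)(-1/z)$. Using $\mathcal R^{2-k}_{[k/2-1]}q^0=h_k(0)/(2\pi y)^{k/2-1}$, we get
$$
\mathcal R 1=c_k\,y^{1-k/2},\qquad \mathcal R z^{k-2}=c_k\,y^{1-k/2}|z|^{k-2},\qquad c_k=\frac{(k-2)!}{(k/2-1)!\,(4\pi)^{k/2-1}},
$$
both real. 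This is consistent with the $z=iy$ formula quoted earlier. Hence the real part of the third term is $-\frac{C_k}{2}c_k\,y^{1-k/2}(1+|z|^{k-2})\,\Re\widehat\Pi_0^{(k)}$-type quantity.

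Because $C_k=(2\pi i)^{k-1}/(k-2)!$ is purely imaginary, only $\Im\widehat\Pi_0^{(k)}(w)$ contributes. Precisely, $\Re\bigl(C_k\widehat\Pi_0^{(k)}\bigr)=-\Im\bigl(C_k/i\bigr)\ldots$ more simply $\Re(C_k\widehat\Pi_0)=i C_k\cdot\Im\widehat\Pi_0$, with $iC_k=-(-1)^{k/2}(2\pi)^{k-1}/(k-2)!$ real. Substituting Theorem~\ref{theorem:raised-Pi0-Eisenstein}, the third term contributes a real multiple of $y^{1-k/2}(1+|z|^{k-2})E(w,k/2)$.

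The remaining step, and the only real obstacle, is checking that this multiple, after multiplication by $2(-1)^{k/2+1}$, is exactly $+\frac{2\pi}{1-k}$, so that it cancels the correction from the first bracket. The structural reason it must work is that the constant in Theorem~\ref{theorem:raised-Pi0-Eisenstein} was derived from the same comparison of $y^{k/2-1}$ coefficients. Nevertheless, I would verify the signs and the powers of $2$ and $\pi$ carefully by direct multiplication of $c_k$, $iC_k$, and the constant of \eqref{formula:raised-Pi0-Eisenstein}. After the cancellation, the identity \eqref{eq:G-bullet-exact-real-part} follows for $w\in\Delta_\sigma\setminus\Gamma\cdot\{i,\rho\}$, first near the imaginary axis and then everywhere by analytic continuation in $z$.
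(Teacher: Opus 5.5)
Your proposal is correct and follows the paper's own route: apply $\mathcal R^{2-k}_{[k/2-1],z}\mathcal R^{2-k}_{[k/2-1],w}$ to the symmetric definition \eqref{eq:H-bullet-definition}, use Theorem~\ref{thm:maass-green} at $z$ and at $-1/z$ together with the modular invariance of $G_{k/2}$, and cancel the Eisenstein terms with Theorem~\ref{theorem:raised-Pi0-Eisenstein}. The constant check you deferred does close, but your stated value of $iC_k$ has the wrong sign; with your sign the Eisenstein term would double instead of cancel. The correct value is $iC_k=(-1)^{k/2}(2\pi)^{k-1}/(k-2)!$, which gives $c_k\,\Re\bigl(C_k\widehat\Pi_0^{(k)}(w)\bigr)=\frac{2(-1)^{k/2+1}\pi}{k-1}E(w,k/2)$, so the third term contributes exactly $+\frac{2\pi}{1-k}y^{1-k/2}(1+|z|^{k-2})E(w,k/2)$, as required.
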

	
	
	\subsection{Normalized CM values of the extended Green function}
	\label{subsec:exact-CM-period-module}
	
We now consider the values of $\mathcal G_{k/2}^\bullet$ at CM points. Although $\Re\mathcal G_{k/2}^\bullet$ is modular invariant, the complex values may depend on the choice of representative of a quadratic-form class.
	
	Let $Q(X,Y)=aX^2-bXY+cY^2$ be a positive-definite integral binary quadratic form of discriminant $D_Q=b^2-4ac<0$, and let
	$$
	z_Q=\frac{b+\sqrt{D_Q}}{2a},\qquad\sqrt{D_Q}=i\sqrt{|D_Q|}.
	$$
	If $z_Q=x_Q+iy_Q$, then
	$$
	\refstepcounter{equation}\label{eq:CM-homogeneous-coordinates}
	\frac{1}{2iy_Q}=\frac{a}{\sqrt{D_Q}},\quad
	\frac{x_Q}{iy_Q}=\frac{b}{\sqrt{D_Q}},\quad
	\frac{|z_Q|^2}{2iy_Q}=\frac{c}{\sqrt{D_Q}}.
	\eqno(\theequation)
	$$
	
	Applying the Maass raising operators to the period polynomial in the transformation law for $H_k$, we obtain a homogeneous polynomial of degree $k/2-1$ in $1/(2iy)$, $x/(iy)$, and $|z|^2/(2iy)$. At $z=z_Q$, these quantities equal $a/\sqrt{D_Q}$, $b/\sqrt{D_Q}$, and $c/\sqrt{D_Q}$, respectively. Thus, multiplication by $(\sqrt{D_Q})^{k/2-1}$ gives a homogeneous polynomial in $a,b,c$ with coefficients expressed in terms of the Eichler periods. Following Zagier \cite[(23)]{GZ00}, we define
	$$
	\refstepcounter{equation}\label{eq:I-bullet-definition}
	\widehat I_{k/2}^\bullet(Q;w):=D_Q^{(k-2)/4}\mathcal G_{k/2}^\bullet(z_Q,w).
	\eqno(\theequation)
	$$
	It follows from \eqref{eq:H-bullet-S} that $\widehat I_{k/2}^\bullet(Q;w)$ is invariant under $S$:
	$$
	\widehat I_{k/2}^\bullet([c,-b,a];w)=\widehat I_{k/2}^\bullet([a,b,c];w).
	$$
	Set
	$$
	P_Q(u):=Q(u-1,1)^{k/2-1}-Q(u,1)^{k/2-1}-Q(-1,1)^{k/2-1}+Q(0,1)^{k/2-1}.
	$$
	
	\begin{proposition}
		\label{prop:I-bullet-T}
		Let $k\in S$ and $z_Q\notin\Gamma w$. Under the transformation
		$T:[a,b,c]\mapsto[a,b+2a,a+b+c]$, one has
		\begin{multline}
			\label{eq:I-bullet-T-general}
			\widehat I_{k/2}^\bullet([a,b+2a,a+b+c];w)-\widehat I_{k/2}^\bullet([a,b,c];w)\\
			=\frac{(-1)^{k/2-1}(2\pi i)^{k/2}}{2(\frac{k}{2}-1)!}
			\mathcal R^{2-k}_{[k/2-1],w}\int_0^{i\infty}P_Q(u)F_k(u,w)\,du.
		\end{multline}
		Since $P_Q(0)=0$, the right-hand side is a linear combination of the periods
		$\widehat\Pi_j^{(k)}(w)$ with $j\ge1$.
	\end{proposition}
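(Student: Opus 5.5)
We compute directly from the transformation law (eq:H-bullet-T) of Proposition~\ref{prop:H-bullet-transformations}. First, for $Q=[a,b,c]$ and $TQ=[a,b+2a,a+b+c]$ we have $D_{TQ}=D_Q$ and
$$z_{TQ}=\frac{b+2a+\sqrt{D_Q}}{2a}=z_Q+1 .$$
Hence the left-hand side of \eqref{eq:I-bullet-T-general} equals
$$D_Q^{(k-2)/4}\bigl(\mathcal G^\bullet_{k/2}(z+1,w)-\mathcal G^\bullet_{k/2}(z,w)\bigr)\big|_{z=z_Q}.$$
The Maass raising operators commute with the translation $z\mapsto z+1$. So this difference is obtained by applying $\mathcal R^{2-k}_{[k/2-1],z}\mathcal R^{2-k}_{[k/2-1],w}$ to the right-hand side of \eqref{eq:H-bullet-T}. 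The hypothesis $z_Q\notin\Gamma w$ ensures that the analytic continuation of $H_k^\bullet$ to $z_Q$ and $z_Q+1$ is well defined and that \eqref{eq:H-bullet-T} persists there.

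Next I rewrite the period-polynomial side of \eqref{eq:H-bullet-T} as a single integral, using the definition \eqref{eq:Pi-periods}. Expanding binomially, the sum $\sum_{j=1}^{k-3}(-1)^j\binom{k-2}{j}\bigl((z+1)^{k-2-j}-z^{k-2-j}\bigr)\Pi_j^{(k)}(w)$ equals
$$\int_0^{i\infty}\Bigl[(z+1-u)^{k-2}-(z-u)^{k-2}-(z+1)^{k-2}+z^{k-2}\Bigr]F_k(u,w)\,du .$$
Here the $j=0$ terms are removed by the last two summands. The $j=k-2$ terms cancel automatically, since $(z+1)^0-z^0=0$. The raising operator in $w$ passes under the integral sign, because the integrand converges uniformly along the imaginary axis and $\Gamma w$ avoids that axis.

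The key step is the identity
$$\mathcal R^{2-k}_{[k/2-1],z}(z-u)^{k-2}=\frac{(k-2)!}{(\frac k2-1)!\,(4\pi)^{k/2-1}}\left(\frac{|z-u|^2}{y}\right)^{k/2-1}\qquad(u\ \text{real or as a polynomial identity in }u).$$
It is consistent with the case $u=0$, $z=iy$ already used in the proof of Theorem~\ref{theorem:raised-Pi0-Eisenstein}. I would prove it by induction on the number of raisings, or by noting that both sides are polynomials in $u$ and checking equivariance under translation together with the $u=0$ case for general $z$.

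At $z=z_Q$ we have $Q(u,1)=a|u-z_Q|^2$ and $1/y_Q=2a/\sqrt{|D_Q|}$. Therefore $|z_Q-u|^2/y_Q=2Q(u,1)/\sqrt{|D_Q|}$, and this also holds for $u$ on the imaginary axis as a polynomial identity. The four summands of the integrand correspond to $u\mapsto u-1,\,u,\,-1,\,0$, so after raising they produce exactly $P_Q(u)$, up to a constant. Multiplying by $D_Q^{(k-2)/4}=(\sqrt{D_Q})^{k/2-1}$ with $\sqrt{D_Q}=i\sqrt{|D_Q|}$ removes the $|D_Q|$-powers and leaves a factor $(2i)^{k/2-1}$.

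Collecting the constants gives
$$\frac{C_k}{2}\cdot\frac{(k-2)!\,(2i)^{k/2-1}}{(\frac k2-1)!(4\pi)^{k/2-1}}=\frac{(2\pi i)^{k-1}\,i^{k/2-1}}{2(\frac k2-1)!(2\pi)^{k/2-1}},$$
which should simplify to $\frac{(-1)^{k/2-1}(2\pi i)^{k/2}}{2(k/2-1)!}$. The remark $P_Q(0)=0$ then follows by substituting $u=0$. Expanding $P_Q$ in powers of $u$ shows that only $\widehat\Pi^{(k)}_j(w)$ with $j\ge1$ occur.

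I expect the main obstacle to be the raising identity for $(z-u)^{k-2}$ together with the careful tracking of signs and powers of $i$. This includes the branch of $D_Q^{(k-2)/4}$ and the orientation of the variable $u$ in the period polynomial, and these determine whether the final constant is exactly $(-1)^{k/2-1}$.
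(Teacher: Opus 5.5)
Your proposal is correct and follows essentially the same route as the paper: you write the $T$-difference of $H_k^\bullet$ as $\frac{C_k}{2}\int_0^{i\infty}\bigl((z+1-u)^{k-2}-(z-u)^{k-2}-(z+1)^{k-2}+z^{k-2}\bigr)F_k(u,w)\,du$, raise it with the closed form for $\mathcal R^{2-k}_{[k/2-1],z}(z-u)^{k-2}$, and evaluate at $z_Q=z_{TQ}-1$ using $Q(u,1)=a(u-z_Q)(u-\bar z_Q)$; your form $\bigl((z-u)(\bar z-u)/y\bigr)^{k/2-1}$ is equivalent to the paper's $\bigl((z-u)(1-\frac{z-u}{2i\Im z})\bigr)^{k/2-1}$, and induction on the number of raisings does prove it. The constant you left unchecked does come out exactly as stated, because $i^{k-1}\,i^{k/2-1}=i^{k/2}\,i^{k-2}=(-1)^{k/2-1}i^{k/2}$.
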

	
	\begin{proof}
		We use the identity
		$$
		\refstepcounter{equation}\label{eq:raised-CM-kernel-bullet}
		D_Q^{(k-2)/4}\left.\mathcal R^{2-k}_{[k/2-1],z}(z-u)^{k-2}\right|_{z=z_Q}
		=\frac{(-1)^{k/2-1}(k-2)!}{\left(\frac{k}{2}-1\right)!(2\pi i)^{k/2-1}}Q(u,1)^{k/2-1}.
		\eqno(\theequation)
		$$
		Indeed,
		$$
		\mathcal R^{2-k}_{[k/2-1],z}(z-u)^{k-2}
		=\frac{(k-2)!}{\left(\frac{k}{2}-1\right)!(2\pi i)^{k/2-1}}
		\left((z-u)\left(1-\frac{z-u}{2i\Im z}\right)\right)^{k/2-1},
		$$
		and at $z=z_Q$,
		$$
		(z_Q-u)\left(1-\frac{z_Q-u}{2i\Im z_Q}\right)
		=-\frac{Q(u,1)}{\sqrt{D_Q}}.
		$$
		By \eqref{eq:H-bullet-definition} and the $T$-invariance of $H_k$,
		\begin{multline}
			H_k^\bullet(z+1,w)-H_k^\bullet(z,w)\\
			=\frac{C_k}{2}\int_0^{i\infty}
			\left((z+1-u)^{k-2}-(z-u)^{k-2}-(z+1)^{k-2}+z^{k-2}\right)F_k(u,w)\,du.
		\end{multline}
		Applying the Maass raising operators in both variables, multiplying by $D_Q^{(k-2)/4}$, and setting $z=z_Q$, we obtain the polynomial $P_Q(u)$ in the integrand by \eqref{eq:raised-CM-kernel-bullet}. Since $T$ preserves the discriminant, the left-hand side gives precisely the difference stated in the proposition.
	\end{proof}

	Thus $\widehat I_{k/2}^\bullet(Q;w)$ is invariant under $S$, while its change under $T$ is given by \eqref{eq:I-bullet-T-general}. For fixed $w$, let $\Lambda_{k,w}\subset\mathbb C$ be the subgroup generated by all differences appearing in this formula as $Q$ ranges over the positive-definite integral quadratic forms. Since $S$ and $T$ generate $\Gamma$, equivalent forms $Q$ and $Q'$ satisfy
	$$
	\widehat I_{k/2}^\bullet(Q';w)-\widehat I_{k/2}^\bullet(Q;w)\in\Lambda_{k,w}.
	$$
	
	To compute these differences, we use analytic continuation along paths determined by chosen words in $S$ and $T$. Monodromy around the poles is not included in $\Lambda_{k,w}$ and is considered separately in Proposition~\ref{prop:CM-diagonal-monodromy}.
	\begin{theorem}
		\label{thm:exact-period-module}
		The subgroups $\Lambda_{k,w}\subset\mathbb C$ are given by
		$$
		\Lambda_{4,w}=4\pi^2\mathbb Z\widehat\Pi_1^{(4)}(w),\quad
		\Lambda_{6,w}=4\pi^3i\mathbb Z\widehat\Pi_1^{(6)}(w),
		$$
		$$
		\begin{gathered}
			\Lambda_{8,w}=8\pi^4\mathbb Z\widehat\Pi_1^{(8)}(w)+4\pi^4\mathbb Z\left(\widehat\Pi_1^{(8)}(w)+\widehat\Pi_2^{(8)}(w)\right)
			+\frac{8\pi^4}{3}\mathbb Z\widehat\Pi_3^{(8)}(w),
		\end{gathered}
		$$
		$$
		\begin{gathered}
			\Lambda_{10,w}=16\pi^5i\mathbb Z\widehat\Pi_1^{(10)}(w)+8\pi^5i\mathbb Z\widehat\Pi_2^{(10)}(w)
			+\frac{4\pi^5i}{3}\mathbb Z\left(2\widehat\Pi_1^{(10)}(w)+3\widehat\Pi_2^{(10)}(w)+2\widehat\Pi_3^{(10)}(w)\right),
		\end{gathered}
		$$
		$$
		\refstepcounter{equation}\label{eq:Lambda-k}
		\begin{gathered}
			\Lambda_{14,w}=32\pi^7i\mathbb Z\widehat\Pi_1^{(14)}(w)+16\pi^7i\mathbb Z\left(\widehat\Pi_1^{(14)}(w)+\widehat\Pi_2^{(14)}(w)\right)\\
			+\frac{16\pi^7i}{9}\mathbb Z\left(12\widehat\Pi_1^{(14)}(w)+\widehat\Pi_3^{(14)}(w)\right)+\frac{8\pi^7i}{3}\mathbb Z\left(\widehat\Pi_2^{(14)}(w)+\widehat\Pi_4^{(14)}(w)\right)\\
			+\frac{4\pi^7i}{45}\mathbb Z\left(126\widehat\Pi_1^{(14)}(w)+15\widehat\Pi_2^{(14)}(w)+15\widehat\Pi_4^{(14)}(w)+6\widehat\Pi_5^{(14)}(w)\right).
		\end{gathered}\eqno(\theequation)
		$$
	\end{theorem}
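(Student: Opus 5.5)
The plan is to compute, for each $k\in S$, the $\mathbb Z$-module generated by the right-hand sides of \eqref{eq:I-bullet-T-general} as $Q=[a,b,c]$ runs over all positive-definite integral forms. By Proposition~\ref{prop:I-bullet-T} the generator attached to $Q$ is
$$
\frac{(-1)^{k/2-1}(2\pi i)^{k/2}}{2(\frac k2-1)!}\sum_{j\ge1}p_j(a,b,c)\,\widehat\Pi_j^{(k)}(w),
$$
where the $p_j$ are the coefficients of $P_Q(u)=\sum_j p_j u^j$. These coefficients are integer polynomials in $(a,b,c)$, homogeneous of degree $k/2-1$, and $p_0=0$. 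Using \eqref{eq:Pi-reflection} and \eqref{relation:raised-periods-reflection}, I first rewrite the coefficients $p_j$ with $j>(k-2)/2$ in terms of the periods $\widehat\Pi_{k-2-j}^{(k)}$. The generator then becomes an integral linear form in the independent raised periods $\widehat\Pi_1,\dots,\widehat\Pi_{\lfloor (k-2)/2\rfloor}$. The top coefficient $j=k-2$ is absorbed into $\widehat\Pi_0$ with coefficient $p_{k-2}=0$, because the $u^{k-2}$ terms cancel in $P_Q$. For $k\equiv 2\pmod 4$ the middle period drops out.

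The problem thus reduces to lattice arithmetic. I need the image $L_k\subset\mathbb Z^{r}$ of the map $(a,b,c)\mapsto(\tilde p_1,\dots,\tilde p_r)$, where $r=\lfloor(k-2)/2\rfloor$, and in fact the subgroup generated by that image. Two facts make this manageable. First, $P_Q$ is a polynomial in $(a,b,c)$ of degree $k/2-1$, so the subgroup generated by its values on $\mathbb Z^3$ is determined by finite differences. It therefore equals the span of the values at finitely many points, for example all $(a,b,c)$ with small entries. Second, the positivity constraint $b^2<4ac$ is harmless. Positive-definite forms are Zariski dense, and integer-valued polynomial maps attain the same lattice on any translate of a sufficiently large box contained in the positive cone. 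I would argue this by shifting $(a,c)\mapsto(a+N,c+N)$ with $N$ large and noting that finite differences are translation invariant.

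Concretely, I compute $P_Q$ explicitly for each $k$. For $k=4$, $P_Q(u)=Q(u-1,1)-Q(u,1)-Q(-1,1)+Q(0,1)=-2au$, which gives the generator $\propto a\widehat\Pi_1$. The prefactor $\frac{-(2\pi i)^2}{2}\cdot(-2a)$ then gives $4\pi^2 a$, so $\Lambda_{4,w}=4\pi^2\mathbb Z\widehat\Pi_1^{(4)}$. For $k=6$ the generator is quadratic in $(a,b,c)$, and the same procedure gives $4\pi^3 i\mathbb Z\widehat\Pi_1^{(6)}$. For $k=8,10,14$ I expand $Q(u-1,1)^m-Q(u,1)^m$ with $m=k/2-1$ and collect the coefficient vectors. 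I then evaluate on a finite generating set of monomial-difference points and compute the Hermite normal form of the resulting integer matrix. This produces the bases listed in \eqref{eq:Lambda-k}, including the fractional prefactors such as $\frac{8\pi^4}{3}$ and $\frac{4\pi^7 i}{45}$, which arise from the $1/(\frac k2-1)!$ normalization.

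The main obstacle is justifying that the generated group is exactly this lattice, not merely contained in it. Containment follows from the explicit form of the generators. For equality, I must exhibit forms $Q$ realizing each basis vector, or integer combinations of them, while respecting positive-definiteness. I expect to choose specific forms such as $[1,0,n]$, $[1,1,n]$, and $[2,1,n]$ with $n$ large, and to check that the differences in $n$ recover the Hermite basis. A secondary point is the independence of the raised periods over $\mathbb Q$: the statement only describes $\Lambda_{k,w}$ as a sum of cyclic groups, so I do not need it.
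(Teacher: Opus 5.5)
Your proposal is correct and follows essentially the paper's own route: reduce to the independent raised periods via the reflection relation, use Newton interpolation (finite differences) to replace $\mathbb Z^3$ by a finite box of triples $(a,b,c)$, compute Hermite normal forms, and multiply by the common factor $\frac{(-1)^{k/2-1}(2\pi i)^{k/2}}{2(\frac k2-1)!}$. The only difference is how you handle positive-definiteness. The paper checks computationally that forms with $1\le a,c\le3$ already give the same Hermite normal forms. Your translated box $a,c\in\{N,\dots,N+\frac k2-1\}$, $b\in\{0,\dots,\frac k2-1\}$ with $N>\frac{k-2}{4}$ lies inside the positive-definite cone and gives equality conceptually, so the extra search for special forms such as $[1,0,n]$ in your last paragraph is unnecessary.
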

	
	\begin{proof}
		Consider the coefficients of the polynomial $P_Q(u)$ in \eqref{eq:I-bullet-T-general}. By \eqref{relation:raised-periods-reflection}, it is enough to work with the following independent raised periods:
		$$
		\widehat\Pi_1^{(4)};\quad
		\widehat\Pi_1^{(6)};\quad
		\widehat\Pi_1^{(8)},\widehat\Pi_2^{(8)},\widehat\Pi_3^{(8)};\quad
		\widehat\Pi_1^{(10)},\widehat\Pi_2^{(10)},\widehat\Pi_3^{(10)};\quad
		\widehat\Pi_1^{(14)},\ldots,\widehat\Pi_5^{(14)}.
		$$
		
		The subgroup generated by these coefficients can be determined from finitely many forms. Each coefficient is an integer polynomial in $a,b,c$ of degree at most $k/2-1$ in each variable. For a polynomial $f$ of degree at most	$k/2-1$, Newton's interpolation formula gives
		$$
		f(n)=\sum_{s=0}^{k/2-1}\binom ns\Delta^sf(0),\qquad\Delta f(n)=f(n+1)-f(n).
		$$
		Each $\Delta^sf(0)$ is an integer linear combination of $f(0),\ldots,f(s)$. Hence all values $f(n)$, $n\in\mathbb Z$, are generated by $f(0),\ldots,f(k/2-1)$. Applying this formula successively to $a,b,c$, we find that it is enough to consider the range $0\le a,b,c\le k/2-1$.
		
		The coefficient vectors for these triples form the rows of integer matrices whose Hermite normal forms are
		$$
		M_4=M_6=(2),\quad
		M_8=\left(\begin{smallmatrix}6&0&0\\3&3&0\\0&0&2\end{smallmatrix}\right),\quad
		M_{10}=\left(\begin{smallmatrix}24&0&0\\0&12&0\\4&6&4\end{smallmatrix}\right),\quad
		M_{14}=\left(\begin{smallmatrix}360&0&0&0&0\\180&180&0&0&0\\240&0&20&0&0\\0&30&0&30&0\\126&15&0&15&6\end{smallmatrix}\right).
		$$
		The same Hermite normal forms are obtained by restricting to positive-definite integral forms with $1\le a,c\le3$, so the corresponding differences generate $\Lambda_{k,w}$.
		
		The common factors in \eqref{eq:I-bullet-T-general} for $k=4,6,8,10,14$ are, respectively,
		$$
		2\pi^2,\qquad-2\pi^3i,\qquad-\frac{4\pi^4}{3},\qquad\frac{2\pi^5i}{3},\qquad-\frac{4\pi^7i}{45}.
		$$
		Multiplying the rows of $M_k$ by these factors gives \eqref{eq:Lambda-k}.
	\end{proof}
	
	\begin{corollary}
		\label{cor:I-bullet-class-invariant}
		For fixed $w$ and $z_Q\notin\Gamma w$, the class
		$$
		\refstepcounter{equation}\label{eq:I-bullet-class}
		\left[\widehat I_{k/2}^\bullet(Q;w)\right]\in\mathbb C/\Lambda_{k,w}
		\eqno(\theequation)
		$$
		depends only on the $\Gamma$-equivalence class of $Q$.
	\end{corollary}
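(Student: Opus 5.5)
The corollary follows from the invariance of $\widehat I_{k/2}^\bullet$ under $S$ and the description of its change under $T$ in Proposition~\ref{prop:I-bullet-T}. I would reduce a general equivalence $Q'=Q\circ\gamma$, $\gamma\in\Gamma$, to a chain of elementary moves by $S$, $T$ and $T^{-1}$.

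Write $\gamma$ as a word $\gamma=g_1g_2\cdots g_n$ with each $g_i\in\{S,T,T^{-1}\}$. Set $Q_0=Q$ and $Q_i=Q_{i-1}\circ g_i$, so that $Q_n=Q'$. All the $Q_i$ are positive-definite integral forms of the same discriminant $D_Q$. Their roots $z_{Q_i}$ all lie in the single orbit $\Gamma z_Q$, which is disjoint from $\Gamma w$ because $z_Q\notin\Gamma w$. Hence every value $\widehat I_{k/2}^\bullet(Q_i;w)$ is defined, using the continuation along the paths fixed by the chosen words as in the text preceding Theorem~\ref{thm:exact-period-module}.

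It then suffices to show that each consecutive difference $\widehat I_{k/2}^\bullet(Q_i;w)-\widehat I_{k/2}^\bullet(Q_{i-1};w)$ lies in $\Lambda_{k,w}$. The three cases are as follows.
\begin{itemize}
\item For $g_i=S$ the difference is $0$, by the $S$-invariance $\widehat I_{k/2}^\bullet([c,-b,a];w)=\widehat I_{k/2}^\bullet([a,b,c];w)$. This invariance comes from \eqref{eq:H-bullet-S}, using that $\mathcal R^{2-k}_{[k/2-1]}$ intertwines the weight $2-k$ slash action of $S$ with the weight $0$ action.
\item For $g_i=T$ the difference is exactly one of the generating differences in \eqref{eq:I-bullet-T-general} with $Q=Q_{i-1}$, so it lies in $\Lambda_{k,w}$ by definition.
\item For $g_i=T^{-1}$ the difference is the negative of the $T$-difference computed at the form $Q_i$, since $Q_{i-1}=Q_i\circ T$. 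It therefore lies in $\Lambda_{k,w}$ because $\Lambda_{k,w}$ is a group.
\end{itemize}

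Summing the telescoping differences gives $\widehat I_{k/2}^\bullet(Q';w)-\widehat I_{k/2}^\bullet(Q;w)\in\Lambda_{k,w}$. The class in $\mathbb C/\Lambda_{k,w}$ therefore depends only on the class of $Q$.

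The one point needing care is well-definedness with respect to the choice of word and of continuation path. I would handle it by the convention fixed before Theorem~\ref{thm:exact-period-module}: values are taken along the paths determined by the chosen words. Monodromy around the poles $\Gamma w$ is excluded from $\Lambda_{k,w}$ and is treated separately in Proposition~\ref{prop:CM-diagonal-monodromy}. The corollary is understood with respect to this convention.
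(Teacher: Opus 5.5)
Your proposal is correct and follows the paper's own argument. The ingredients are the same: $S$-invariance of $\widehat I_{k/2}^\bullet$, the $T$-differences of Proposition~\ref{prop:I-bullet-T} lying in $\Lambda_{k,w}$ by definition, the fact that $S$ and $T$ generate $\Gamma$, continuation along word-determined paths, and monodromy deferred to Proposition~\ref{prop:CM-diagonal-monodromy}. Your explicit telescoping over the word and your handling of $T^{-1}$-moves via the group property simply spell out what the paper compresses into the phrase ``since $S$ and $T$ generate $\Gamma$''.
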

	
	In \cite{Mel08}, Mellit proved that at CM points, after normalization by the appropriate power of the discriminant, the values of his complex lift associated with two equivalent quadratic forms differ by an element of $2\pi i\mathbb Q$. In the present setting, we determine the subgroup formed by these differences explicitly.
	
	\begin{proposition}
		\label{prop:CM-period-module-discrete}
		Let $R(X,Y)=AX^2-BXY+CY^2$ be a positive-definite integral quadratic form of discriminant $D_R<0$, and suppose that
		$$
		w_R=\frac{B+\sqrt{D_R}}{2A}\in\Delta_\sigma,\qquad w_R\notin\Gamma\cdot\{i,\rho\}.
		$$
		Then, for $1\le j\le k-3$,
		$$
		\refstepcounter{equation}\label{eq:CM-raised-period-rational}
		D_R^{(k-2)/4}(2\pi i)^{(k-2)/2}\widehat\Pi_j^{(k)}(w_R)\in\mathbb Q.
		\eqno(\theequation)
		$$
		Consequently, for some $q\in\mathbb Q_{\ge0}$,
		$$
		D_R^{(k-2)/4}\Lambda_{k,w_R}=2\pi iq\mathbb Z.
		$$
	\end{proposition}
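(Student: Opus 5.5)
The plan is to compute $\widehat\Pi_j^{(k)}(w)$ explicitly on $\Delta_\sigma$ and evaluate it at $w=w_R$, in the same way that the identity \eqref{eq:raised-CM-kernel-bullet} was used for the $z$-variable.

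\textbf{Step 1: polynomiality of the periods.} For $1\le j\le k-3$, the contour identity \eqref{eq:contour-identity-general} of Proposition~\ref{thm:Pi-closed-form-general} determines $\Pi_j^{(k)}(w)$ as a polynomial in $w$ with rational coefficients; Example~\ref{ex:Pi-closed-forms} and the appendix give instances. One compares coefficients of powers of $z$. The unknowns $\Pi_j$ with $1\le j\le k-3$ are fixed once the reflection relation \eqref{eq:Pi-reflection} is used; only $\Pi_0=-\Pi_{k-2}$ is left undetermined. I would check that each such $\Pi_j$ has degree at most $k-2$ in $w$.

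\textbf{Step 2: the raising operator as a substitution.} For a polynomial $p(w)$ of degree at most $k-2$, the operator $\mathcal R^{2-k}_{[k/2-1]}p$ can be written as a homogeneous polynomial of degree $k/2-1$ in $1/(2iy)$, $x/(iy)$ and $|w|^2/(2iy)$. These quantities carry the rational coefficients of $p$, times $(2\pi i)^{-(k/2-1)}$, up to rational constants coming from the iterated operator. I would prove this in the most economical way. Both sides are linear in $p$, so it suffices to treat the spanning family $p(w)=(w-u)^{k-2}$ for $u\in\mathbb Q$. For this family the formula
\[
\mathcal R^{2-k}_{[k/2-1]}(w-u)^{k-2}=\frac{(k-2)!}{(\frac k2-1)!(2\pi i)^{k/2-1}}\left((w-u)\Bigl(1-\frac{w-u}{2i\Im w}\Bigr)\right)^{k/2-1}
\]
was already established in the proof of Proposition~\ref{prop:I-bullet-T}. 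At $w=w_R$, the formula $(w_R-u)\bigl(1-\frac{w_R-u}{2i\Im w_R}\bigr)=-R(u,1)/\sqrt{D_R}$ gives
\[
D_R^{(k-2)/4}(2\pi i)^{(k-2)/2}\,\mathcal R^{2-k}_{[k/2-1]}(w-u)^{k-2}\big|_{w=w_R}\in\mathbb Q\, R(u,1)^{k/2-1}\subset\mathbb Q .
\]
Here one needs $D_R^{(k-2)/4}=(\sqrt{D_R})^{k/2-1}$, with the branch convention $\sqrt{D_R}=i\sqrt{|D_R|}$ that the paper uses for $D_Q$. The powers $(w-u)^{k-2}$, $u\in\mathbb Q$, span all polynomials of degree at most $k-2$ over $\mathbb Q$. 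Writing $\Pi_j^{(k)}$ as a rational combination of them therefore gives \eqref{eq:CM-raised-period-rational}.

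\textbf{Step 3: the period module.} By Theorem~\ref{thm:exact-period-module}, each generator of $\Lambda_{k,w}$ is a rational combination of $\widehat\Pi_j^{(k)}(w)$, $1\le j\le k-3$, multiplied by $\pi^{k/2}$ or $i\pi^{k/2}$. Equivalently, the generator is a rational multiple of $(2\pi i)^{k/2}$ times such a combination: the factors listed there become rational when divided by $(2\pi i)^{k/2}$. Write the generator as $(2\pi i)^{k/2}=2\pi i\,(2\pi i)^{(k-2)/2}$ times a rational combination of the $\widehat\Pi_j^{(k)}(w)$, and multiply by $D_R^{(k-2)/4}$. By Step 2, each generator of $D_R^{(k-2)/4}\Lambda_{k,w_R}$ lies in $2\pi i\mathbb Q$. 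A finitely generated subgroup of $2\pi i\mathbb Q$ is cyclic, so it equals $2\pi i q\mathbb Z$ with $q\ge0$ after a choice of sign.

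The main obstacle is Step 1. One has to verify that the linear system coming from \eqref{eq:contour-identity-general} really determines the periods $\Pi_j$ with $1\le j\le k-3$ uniquely, with $\Pi_0$ as the only free unknown. Uniqueness is needed because a polynomial solution of the system only equals the actual period if no other solution exists. If this fails for some $k$, I would fall back on the explicit appendix formulas for $k=8,10,14$ and check rationality of the coefficients directly.
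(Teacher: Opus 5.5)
Your proposal is correct and is essentially the paper's proof: the periods $\Pi_j^{(k)}$ are polynomials on $\Delta_\sigma$, the raised polynomials are evaluated at $w_R$ via \eqref{eq:raised-CM-kernel-bullet}, and the generators from Theorem~\ref{thm:exact-period-module} then generate a finitely generated, hence cyclic, subgroup of $2\pi i\mathbb Q$; the only difference is that you use the rational spanning family $(w-u)^{k-2}$, $u\in\mathbb Q$, where the paper extracts coefficients of $u$ to obtain Lemma~\ref{lem:CM-raised-monomials}. The uniqueness you flag in Step~1, which the paper takes for granted, does hold: $r(z)=\int_0^{i\infty}(z-u)^{k-2}F_k(u,w)\,du$ satisfies $r|(1+S)=0$ by \eqref{eq:Pi-reflection}, the left side of \eqref{eq:contour-identity-general} equals $r|(1+U+U^2)$ with $U=\begin{psmallmatrix}-1&-1\\1&0\end{psmallmatrix}$, and since $S_k(\Gamma)=0$ the Eichler--Shimura space of polynomials annihilated by both $1+S$ and $1+U+U^2$ has dimension $2\dim S_k(\Gamma)+1=1$ and is spanned by $z^{k-2}-1$, so only $\Pi_0^{(k)}=-\Pi_{k-2}^{(k)}$ remains free.
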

	
	\begin{proof}
		On $\Delta_\sigma$, the periods $\Pi_j^{(k)}(w)$, $1\le j\le k-3$, are polynomials in $w$ with rational coefficients. Applying Lemma~\ref{lem:CM-raised-monomials} to these polynomials gives \eqref{eq:CM-raised-period-rational}.
		
		Together with \eqref{eq:Lambda-k}, this shows that
		$\frac{D_R^{(k-2)/4}}{2\pi i}\Lambda_{k,w_R}\subset\mathbb Q
		$ is a finitely generated subgroup. Since every finitely generated subgroup of $\mathbb Q$ is cyclic, it has the form $q\mathbb Z$ for some $q\in\mathbb Q_{\ge0}$.
	\end{proof}
	
	Let $w=w_R$ be a CM point. If $Q$ and $Q'$ are equivalent forms, then
	$$
	D_R^{(k-2)/4}\left(\widehat I_{k/2}^\bullet(Q';w_R)-\widehat I_{k/2}^\bullet(Q;w_R)\right)\in2\pi i\mathbb Q.
	$$
	Thus replacing $Q$ by an equivalent form multiplies
	$$
	\exp\left(D_R^{(k-2)/4}\widehat I_{k/2}^\bullet(Q;w_R)\right)
	$$
	by a root of unity.

	\begin{theorem}
		\label{thm:CM-period-root-unity-general}
		Let $\mu_N$ denote the group of roots of unity of order dividing $N$, and set $N_k=1$ for $k=4,8$ and $N_k=4$ for $k=6,10,14$. For $R$ as in Proposition~\ref{prop:CM-period-module-discrete}, we have
		$$
		\refstepcounter{equation}\label{eq:CM-period-uniform-bound}
		\exp\left(D_R^{(k-2)/4}\Lambda_{k,w_R}\right)\subseteq\mu_{N_k}.
		\eqno(\theequation)
		$$
		If $R$ is primitive, then
		$$
		\refstepcounter{equation}\label{eq:CM-period-root-unity-general}
		\exp\left(D_R^{(k-2)/4}\Lambda_{k,w_R}\right)=
		\begin{cases}
			\{1\},&k=4,8,\\
			\mu_4,&k=6,10,14,\ D_R\equiv1\pmod4,\\
			\mu_2,&k=6,10,\ D_R\equiv0\pmod4,\\
			\{1\},&k=14,\ D_R\equiv0\pmod4.
		\end{cases}
		\eqno(\theequation)
		$$
	\end{theorem}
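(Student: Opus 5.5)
The plan is to compute the rational generator $q$ of $\frac{D_R^{(k-2)/4}}{2\pi i}\Lambda_{k,w_R}=q\mathbb Z$ from Proposition~\ref{prop:CM-period-module-discrete} explicitly. The exponential image is then $\mu_N$ where $N$ is the denominator of $q$, since $\exp(2\pi i q\mathbb Z)$ is the cyclic group generated by $e^{2\pi i q}$.

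First, each generator of $\Lambda_{k,w}$ in Theorem~\ref{thm:exact-period-module} has the form $c\,\pi^{k/2}i^{\epsilon}\cdot L(\widehat\Pi^{(k)}(w))$. Here $c\in\mathbb Q$, and $L$ is an integral combination of raised periods. I will rewrite $\pi^{k/2}=\frac{\pi}{(2\pi i)^{(k-2)/2}}\cdot (2\pi i)^{(k-2)/2}\pi^{-1}\cdots$, that is, factor out $2\pi i\cdot(2\pi i)^{(k-2)/2}$. Each generator, multiplied by $D_R^{(k-2)/4}$, then becomes $2\pi i$ times a rational multiple of $D_R^{(k-2)/4}(2\pi i)^{(k-2)/2}\widehat\Pi_j^{(k)}(w_R)$. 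The powers of $i$ and $2$ produced by this rewriting fix the rational prefactors.

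Second, I will evaluate these rational numbers concretely. On $\Delta_\sigma$ the $\Pi_j^{(k)}$ are explicit rational polynomials in $w$ (Example~\ref{ex:Pi-closed-forms} and the appendix). Lemma~\ref{lem:CM-raised-monomials} converts the raised monomials at $w_R$ into homogeneous polynomials in $A,B,C$ of degree $k/2-1$ with explicit rational coefficients. For instance, for $k=4$, $\Pi_1=w^2+w+1$ should give a multiple of $A-B+C$, i.e. $R(1,-1)$ up to sign and normalization.

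Third, I will combine the results. The uniform bound \eqref{eq:CM-period-uniform-bound} follows once every combination is an integer, for $k=4,8$, or lies in $\frac14\mathbb Z$, for $k=6,10,14$, for all integral $A,B,C$. This is checked by writing each resulting polynomial in a binomial basis, as in the Newton-interpolation argument of Theorem~\ref{thm:exact-period-module}. For primitive $R$, I then determine the exact gcd of the values. The values are polynomials in $A,B,C$, and the parity of $B$, i.e. $D_R\bmod 4$, controls whether odd values occur. Primitivity ensures the polynomials (like $R(1,-1)$ and its analogues, together with values $R(x,y)$) jointly attain numbers coprime to $2$. If $D_R\equiv1\pmod 4$, then $B$ is odd and a $\tfrac14$ survives, giving $\mu_4$. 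If $B$ is even, the denominators drop to $2$, giving $\mu_2$, or to $1$ for $k=14$.

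The main obstacle is this last step. It requires showing that, for every primitive $R$, the 2-adic valuation of the gcd of the specific polynomial combinations is exactly the predicted one. Since odd primes never appear in denominators, only the $2$-part matters. I would reduce the polynomials mod $2$ and mod $4$, and use that a primitive form represents some odd integer, or some integer $\equiv 2 \bmod 4$ when $A,C$ are even-restricted, to exhibit the required generator.
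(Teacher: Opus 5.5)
Your plan follows the paper's proof: compute the normalized generators $b_{k,\nu}$ of $\frac{D_R^{(k-2)/4}}{2\pi i}\Lambda_{k,w_R}$ as polynomials in $A,B,C$ via Lemma~\ref{lem:CM-raised-monomials}, show they lie in $\mathbb Z$ (for $k=4,8$) or $\frac14\mathbb Z$, and read off their $2$-adic behaviour from the parity of $B$ and primitivity. Two corrections: your $k=4$ guess is off, since the lemma sends $1,w,w^2$ to $-2A,-B,-2C$ and so $b_{4,1}=2A+B+2C$ rather than a multiple of $A-B+C$; and the primitive case is settled by explicit congruences $\pmod 4$, namely $4b_{6,1}\equiv 4b_{10,3}\equiv 3B^2+2(A+C+AB+AC+BC)$, $4b_{10,1}\equiv 2B^2$, $4b_{14,5}\equiv 3B^2$ (all other generators $\equiv 0$), with primitivity entering only through ``$B$ even $\Rightarrow$ $A$ or $C$ odd $\Rightarrow$ $A+C+AC$ odd''.
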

	
	\begin{proof}
		The calculations in Appendix~\ref{appendix:general-CM-denominators} show that the generators $b_{k,\nu}$ of
		$\frac{D_R^{(k-2)/4}}{2\pi i}\Lambda_{k,w_R}$
		are integers for $k=4,8$ and belong to $\frac14\mathbb Z$ for $k=6,10,14$. This proves \eqref{eq:CM-period-uniform-bound}.
		
		Suppose that $R=[A,B,C]$ is primitive. If $B$ is odd, then $D_R\equiv1\pmod4$. By \eqref{eq:general-CM-congruences}, for each $k=6,10,14$, at least one generator has denominator $4$. Hence the corresponding roots of unity form $\mu_4$.
		
		If $B$ is even, then $D_R\equiv0\pmod4$. The same congruences show that all $b_{14,\nu}$ are integers, so only $1$ occurs for $k=14$. For $k=6,10$, we have $b_{k,\nu}\in\frac12\mathbb Z$. Since $R$ is primitive, at least one of $A,C$ is odd, and hence $A+C+AC\equiv1\pmod2$. By \eqref{eq:general-CM-congruences}, at least one generator is nonintegral for each $k=6,10$. Thus the corresponding roots of unity form $\mu_2$.
	\end{proof}

	In particular, for $w_*=(1+i\sqrt d)/2$, where $d>3$ and $d\equiv3\pmod4$,
	$$
	\exp\left((-d)^{(k-2)/4}\Lambda_{k,w_*}\right)=
	\begin{cases}
		\{1\},&k=4,8,\\
		\mu_4,&k=6,10,14.
	\end{cases}
	$$
	
	\section{Complex CM values of the extended higher Green function}
	\label{sec:extended-gross-zagier}
	
	\subsection{Arithmetic of CM values of the extended Green function}
	\label{subsec:CM-exponential}
	
	As before, let $k\in S$, and let $Q=[a,b,c]$ and $R=[A,B,C]$ be positive-definite integral quadratic forms. Choose a representative $R$ of its $\Gamma$-equivalence class such that $w_R\in\Delta_\sigma$. Assume also that $w_R\notin\Gamma\cdot\{i,\rho\}$ and $z_Q\notin\Gamma w_R$. Fix the branches
	$\sqrt{D_Q}=i\sqrt{|D_Q|}$ and $\sqrt{D_R}=i\sqrt{|D_R|}$.

	\begin{proposition}
		\label{prop:CM-diagonal-monodromy}
		The values of $D_Q^{(k-2)/4}D_R^{(k-2)/4}\mathcal G_{k/2}^\bullet(z_Q,w_R)$ obtained by analytic continuation along different paths differ by an element of $2\pi i\mathbb Z$.
	\end{proposition}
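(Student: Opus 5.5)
The plan is to separate the two sources of path dependence in the analytic continuation of $H_k^\bullet(\cdot,w_R)$. The modular words in $S,T$ were already fixed in defining $\widehat I_{k/2}^\bullet$. What remains is the choice of homotopy class of paths in $\mathfrak H\setminus\Gamma w_R$. Since $\mathfrak H$ is simply connected, two such paths differ by a product of loops around points $\gamma w_R$ of the orbit. Because $H_k^\bullet$ differs from $H_k$ only by a polynomial in $z$ with coefficients depending on $w$, the monodromy of $H_k^\bullet$ in $z$ equals that of $H_k$. So it suffices to show that one positively oriented loop around a single point $\gamma w_R$ changes $D_Q^{(k-2)/4}D_R^{(k-2)/4}\mathcal G_{k/2}^\bullet(z_Q,w_R)$ by an element of $2\pi i\mathbb Z$. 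The full statement then follows by additivity.

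First I compute the monodromy around $\gamma w_R$ with $\gamma=\begin{psmallmatrix}a'&b'\\c'&d'\end{psmallmatrix}$. By the argument of Proposition~\ref{thm:diagonal-logarithm}, using the residue $\Res_{t=\gamma w}F_k(t,w)=(c'w+d')^{k-2}/(2\pi i)$ from the proof of Proposition~\ref{thm:Pi-closed-form-general}, the jump is
$$
\frac{(2\pi i)^{k-1}}{(k-2)!}(z-\gamma w)^{k-2}(c'w+d')^{k-2}=\frac{(2\pi i)^{k-1}}{(k-2)!}\bigl((c'w+d')z-(a'w+b')\bigr)^{k-2},
$$
which is a polynomial of bidegree $(k-2,k-2)$ in $(z,w)$. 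Up to sign, it equals $\frac{(2\pi i)^{k-1}}{(k-2)!}(c'zw+d'z-a'w-b')^{k-2}$, the same kind of kernel as in $\Omega_k$.

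Next I apply $\mathcal R^{2-k}_{[k/2-1],z}\mathcal R^{2-k}_{[k/2-1],w}$ and evaluate at CM points. The identity \eqref{eq:raised-CM-kernel-bullet} handles $(z-u)^{k-2}$ at $z=z_Q$. I need a two-variable analogue: for a bilinear form $\ell(z,w)=c'zw+d'z-a'w-b'$, the double raising of $\ell^{k-2}$, multiplied by $D_Q^{(k-2)/4}D_R^{(k-2)/4}$, equals
$$
\left(\frac{(k-2)!}{(\frac k2-1)!(2\pi i)^{k/2-1}}\right)^{2}\,\cdot\,\frac{(\frac k2-1)!^{\,?}}{\cdots}\;\langle Q,R_\gamma\rangle^{k/2-1}
$$
times suitable Legendre-type polynomial factors. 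Here $\langle Q,R_\gamma\rangle$ is an integer given by the pairing of $Q$ with the translate $R\circ\gamma$. More concretely, after raising, the result is a polynomial in the six quantities of \eqref{eq:CM-homogeneous-coordinates} for $z_Q$ and for $w_R$. Homogeneity (degree $k/2-1$ in each set) turns it into a polynomial with rational coefficients in $a,b,c,A,B,C$ and the integer entries of $\gamma$. I would get this either by raising first in $z$, which gives $(\cdot)^{k/2-1}$ times a polynomial in $u=\gamma w$, and then raising in $w$ by Lemma~\ref{lem:CM-raised-monomials}, or by using $\mathrm{SL}_2(\mathbb R)$-equivariance of raising to reduce to $\gamma=1$.

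The main obstacle is the factors of $2\pi i$ and the denominators. The jump carries $(2\pi i)^{k-1}$, and each $(k/2-1)$-fold raising produces $(2\pi i)^{-(k/2-1)}$, leaving exactly one factor $2\pi i$. I also have to check that the resulting rational multiplier is an integer. I would try to show that the doubly raised CM value of $\ell^{k-2}$ equals $\frac{(k-2)!}{(2\pi i)^{k-2}}P_{k/2-1}(\cdot)$ times a normalization, where $P$ is the Legendre polynomial evaluated at $\frac{\langle Q,R_\gamma\rangle}{\sqrt{D_QD_R}}$. After multiplying by the discriminant powers, this becomes an integer combination, in analogy with the integrality of the $y^{k/2-1}$ terms. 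The argument then reduces to an integrality check, for $k\in S$, of the coefficients of $(\sqrt{D_QD_R})^{k/2-1}P_{k/2-1}(n/\sqrt{D_QD_R})\cdot\frac{(k-2)!}{(k/2-1)!^2}$. This works because $\binom{k-2}{k/2-1}P_{k/2-1}$ has integral coefficients when paired with a parity-matched $n$ (note $n\equiv D_QD_R$ mod $2$). I would verify this last point by inspecting the five weights directly.
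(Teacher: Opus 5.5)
Your reduction is the paper's. Path dependence comes only from loops around $\Gamma w_R$. $H_k^\bullet$ and $H_k$ have the same jump $C_k\bigl((c'w+d')z-(a'w+b')\bigr)^{k-2}$. Its double raising at $(z_Q,w_R)$ is evaluated by \eqref{eq:raised-CM-kernel-bullet} in $z$, followed by Lemma~\ref{lem:CM-raised-monomials} in $w$. You also treat a general orbit point $\gamma w_R$ explicitly: after the $z$-raising, $Q$ is replaced by the integral form $(Q\circ\gamma)(X,Y)=Q(a'X+b'Y,c'X+d'Y)$, which has the same discriminant. The paper compresses this into ``the same calculation applies.''

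The last step does not work as written. The factor $\binom{k-2}{k/2-1}=\frac{(k-2)!}{(k/2-1)!^2}$ in your integrality check is spurious. Put $m=k/2-1$ and $n=2aC+2cA-bB$. Carrying out your own two-step computation gives exactly
\[
2\pi i\,(D_QD_R)^{m/2}P_m\Bigl(\frac{n}{\sqrt{D_QD_R}}\Bigr)
=2\pi i\sum_{r=0}^{\lfloor m/2\rfloor}\binom{m}{2r}\binom{2r}{r}\,n^{m-2r}\Bigl(\frac{n^2-D_QD_R}{4}\Bigr)^{r},
\]
with no binomial prefactor. For example, for $k=4$ the jump is $2\pi i\,n$, not $4\pi i\,n$. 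Showing that $\binom{k-2}{k/2-1}$ times this number is an integer therefore would not prove the claim.

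Your parity remark is the right ingredient, but it must be used in the form $n^2\equiv D_QD_R\pmod 4$. This holds because $n^2\equiv b^2B^2\equiv D_QD_R\pmod 4$. Explicitly, $(n^2-D_QD_R)/4=(aC-cA)^2+(aB-bA)(cB-bC)\in\mathbb Z$, and this is exactly the form in which the paper writes the monodromy. The argument is then uniform in $k$, and no weight-by-weight inspection is needed.
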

	
	\begin{proof}
		It suffices to compute the monodromy around the points of $\Gamma w_R$. Since $H_k^\bullet-H_k$ is a polynomial in $z$, the functions $H_k^\bullet$ and $H_k$ have the same monodromy. By \eqref{eq:H-diagonal-log},
		$\operatorname{Mon}_{w_R}H_k^\bullet(z,w_R)=C_k(z-w_R)^{k-2}$.
		Applying the Maass raising operators in both variables and evaluating at $z=z_Q$ and $w=w_R$, we obtain from \eqref{eq:raised-CM-kernel-bullet}
		\begin{multline}
			\operatorname{Mon}_{w_R}\left(
			D_Q^{(k-2)/4}D_R^{(k-2)/4}\mathcal G_{k/2}^\bullet(z_Q,w_R)
			\right)
			=\\
			2\pi i\sum_{r=0}^{\lfloor(k-2)/4\rfloor}
			\binom{k/2-1}{2r}\binom{2r}{r}
			(2aC+2cA-bB)^{k/2-1-2r}\\
			\times
			\left((aC-cA)^2+(aB-bA)(cB-bC)\right)^r.
		\end{multline}
		All binomial coefficients and expressions in brackets are integers, so the right-hand side belongs to $2\pi i\mathbb Z$.
		
		The same calculation applies at every point of $\Gamma w_R$. Since these are the only singularities, the monodromy along any closed path lies in $2\pi i\mathbb Z$.
	\end{proof}
	
	\begin{proposition}
		\label{prop:extended-CM-modulus}
		Let $Q'$ be a positive-definite integral quadratic form equivalent to $Q$. Then
		$$
		\refstepcounter{equation}\label{eq:extended-CM-root-of-unity-ambiguity}
		\frac{\exp\left(D_{Q'}^{(k-2)/4}D_R^{(k-2)/4}\mathcal G_{k/2}^\bullet(z_{Q'},w_R)\right)}
		{\exp\left(D_Q^{(k-2)/4}D_R^{(k-2)/4}\mathcal G_{k/2}^\bullet(z_Q,w_R)\right)}
		\in\exp\left(D_R^{(k-2)/4}\Lambda_{k,w_R}\right).
		\eqno(\theequation)
		$$
		In particular, this ratio is a root of unity. In addition,
		$$
		\refstepcounter{equation}\label{eq:extended-CM-modulus}
		\left|\exp\left(D_Q^{(k-2)/4}D_R^{(k-2)/4}\mathcal G_{k/2}^\bullet(z_Q,w_R)\right)\right|^2
		=\exp\left(|D_QD_R|^{(k-2)/4}G_{k/2}(z_Q,w_R)\right).
		\eqno(\theequation)
		$$
	\end{proposition}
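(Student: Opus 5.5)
The plan has two parts, matching the two assertions.

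For \eqref{eq:extended-CM-root-of-unity-ambiguity}, the starting point is that $Q$ and $Q'$ are equivalent, so $D_{Q'}=D_Q$. Hence the ratio equals
$$
\exp\Bigl(D_R^{(k-2)/4}\bigl(\widehat I_{k/2}^\bullet(Q';w_R)-\widehat I_{k/2}^\bullet(Q;w_R)\bigr)\Bigr).
$$
We write $Q'=Q\circ\gamma$ with $\gamma$ a word in $S$ and $T$, and compute the value at $z_{Q'}$ by analytic continuation along the path dictated by this word. The $S$-steps contribute nothing, by the $S$-invariance of $\widehat I_{k/2}^\bullet$ coming from \eqref{eq:H-bullet-S}. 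Each $T$-step contributes an element of $\Lambda_{k,w_R}$, by Proposition~\ref{prop:I-bullet-T}. Therefore the difference lies in $\Lambda_{k,w_R}$, as in Corollary~\ref{cor:I-bullet-class-invariant}.

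The only subtlety is the dependence on the path. The value obtained along another path differs by monodromy around the points of $\Gamma w_R$. By Proposition~\ref{prop:CM-diagonal-monodromy}, after multiplication by $D_Q^{(k-2)/4}D_R^{(k-2)/4}$ this monodromy lies in $2\pi i\mathbb Z$, so it disappears after exponentiation. The ratio therefore lies in $\exp\bigl(D_R^{(k-2)/4}\Lambda_{k,w_R}\bigr)$. This group is contained in $\mu_{N_k}$ by Theorem~\ref{thm:CM-period-root-unity-general}, which gives the root-of-unity statement.

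For \eqref{eq:extended-CM-modulus}, we use $|e^X|^2=e^{2\Re X}$ with $X=D_Q^{(k-2)/4}D_R^{(k-2)/4}\mathcal G_{k/2}^\bullet(z_Q,w_R)$. The key step is the power of $i$ in the normalization. With the fixed branches $\sqrt{D_Q}=i\sqrt{|D_Q|}$ and $\sqrt{D_R}=i\sqrt{|D_R|}$, we have
$$
D_Q^{(k-2)/4}D_R^{(k-2)/4}=\bigl(\sqrt{D_Q}\sqrt{D_R}\bigr)^{k/2-1}=(-1)^{k/2-1}|D_QD_R|^{(k-2)/4}.
$$
This is real, so $\Re X=(-1)^{k/2+1}|D_QD_R|^{(k-2)/4}\Re\mathcal G_{k/2}^\bullet(z_Q,w_R)$. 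Corollary~\ref{thm:G-bullet-real-part} gives $2(-1)^{k/2+1}\Re\mathcal G_{k/2}^\bullet=G_{k/2}$. Hence $2\Re X=|D_QD_R|^{(k-2)/4}G_{k/2}(z_Q,w_R)$, which is \eqref{eq:extended-CM-modulus}.

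Two checks remain. First, the real part is independent of the continuation path: the monodromy is purely imaginary, and $G_{k/2}$ is single-valued. Second, Corollary~\ref{thm:G-bullet-real-part} applies because $w_R\in\Delta_\sigma\setminus\Gamma\cdot\{i,\rho\}$. The one point needing care is the consistency of the branch convention for $D^{(k-2)/4}$, which must be read as $(\sqrt{D})^{k/2-1}$ as in the definition of $\widehat I^\bullet_{k/2}$.
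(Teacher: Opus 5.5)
Your proof is correct and follows essentially the same route as the paper: class invariance of the normalized value modulo $\Lambda_{k,w_R}$ (Corollary~\ref{cor:I-bullet-class-invariant}), path-independence of the exponentials via Proposition~\ref{prop:CM-diagonal-monodromy}, Theorem~\ref{thm:CM-period-root-unity-general} for the root-of-unity claim, and Corollary~\ref{thm:G-bullet-real-part} for the modulus identity. Your explicit check that $D_Q^{(k-2)/4}D_R^{(k-2)/4}=(-1)^{k/2-1}|D_QD_R|^{(k-2)/4}$ cancels the sign $(-1)^{k/2+1}$ in the real-part formula supplies exactly the step the paper leaves implicit.
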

	
	\begin{proof}
		By Proposition~\ref{prop:CM-diagonal-monodromy}, the exponentials of the normalized CM values of $\mathcal G_{k/2}^\bullet$ do not depend on the choice of path of analytic continuation. For equivalent forms $Q$ and $Q'$, Corollary~\ref{cor:I-bullet-class-invariant} gives
		$$
		D_{Q'}^{(k-2)/4}\mathcal G_{k/2}^\bullet(z_{Q'},w_R)
		-D_Q^{(k-2)/4}\mathcal G_{k/2}^\bullet(z_Q,w_R)
		\in\Lambda_{k,w_R}.
		$$
		Multiplying by $D_R^{(k-2)/4}$ and taking exponentials, we obtain \eqref{eq:extended-CM-root-of-unity-ambiguity}. By Theorem~\ref{thm:CM-period-root-unity-general}, the right-hand side consists of roots of unity.
		
		The identity \eqref{eq:extended-CM-modulus} follows from \eqref{eq:G-bullet-exact-real-part}.
	\end{proof}
	
	In our setting, the complex Gross--Zagier conjecture proposed by Mellit \cite[\S1.5]{Mel08} takes the following form.
	\begin{conjecture}
		\label{conj:extended-gross-zagier-general}
		Let $k\in S$, and let $Q$ and $R$ be positive-definite integral quadratic forms with discriminants $D_Q,D_R<0$. Assume that
		$w_R\in\Delta_\sigma$, $w_R\notin\Gamma\cdot\{i,\rho\}$, and $z_Q\notin\Gamma w_R$. Then
		$$
		\refstepcounter{equation}\label{eq:extended-gross-zagier-general}
		\exp\left(D_Q^{(k-2)/4}D_R^{(k-2)/4}\mathcal G_{k/2}^\bullet(z_Q,w_R)\right)
		\in\overline{\mathbb Q}^{\times}.
		\eqno(\theequation)
		$$
	\end{conjecture}

	The algebraicity conjecture is unchanged if $H_k^\bullet$ is replaced by $H_k^\bullet+C_kP$, where $P\in\mathbb Q[z,w]$ has degree at most $k-2$ in each variable. Indeed, Lemma~\ref{lem:CM-raised-monomials} shows that the exponentials of the corresponding normalized CM values differ by a root of unity.

	\subsection{Complex conjugation and inverse classes of quadratic forms}
	\label{subsec:CM-conjugation}
	
	Let $w_*=(1+i\sqrt d)/2$, where $d>3$ and $d\equiv3\pmod4$. Since $w_*-1\in\Delta_\sigma$ and the period module is $T$-invariant, we have $\Lambda_{k,w_*}=\Lambda_{k,w_*-1}$.
	
	For a form $Q=[a,b,c]$, set $Q^\vee=[a,-b,c]$. Then
	$z_{Q^\vee}=-\bar z_Q$, and if $Q$ is primitive, then $[Q^\vee]=[Q]^{-1}$.
	For $z_Q\notin\Gamma w_*$, set
	$$
	\refstepcounter{equation}\label{eq:alpha-Q-definition}
	\alpha_Q:=
	\exp\left(
	D_Q^{(k-2)/4}(-d)^{(k-2)/4}
	\mathcal G_{k/2}^\bullet(z_Q,w_*)
	\right).
	\eqno(\theequation)
	$$
	
	\begin{lemma}
		\label{lem:CM-conjugation}
		We have $\alpha_{Q^\vee}=\overline{\alpha_Q}$.
	\end{lemma}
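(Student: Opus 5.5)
The plan is to derive the lemma from a reflection symmetry of the kernel, $\overline{H_k^\bullet(z,w)}=H_k^\bullet(-\bar z,-\bar w)$ up to sign, and then to use that $-\bar w_*=w_*-1$ is a $T$-translate of $w_*$.

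\textbf{Step 1: symmetry of $F_k$ and $H_k$.} The $q$-expansions of $E_k,E_{14-k},\Delta,j$ have real coefficients, so $\overline{F_k(z,w)}=F_k(-\bar z,-\bar w)$, as already used for \eqref{eq:Pi-conjugation}. In the definition \eqref{eq:H-definition-intro}, substitute $t\mapsto-\bar t$. The vertical path from $i\infty$ to $z$ is mapped to the vertical path from $i\infty$ to $-\bar z$. We have $\overline{(2\pi i)^{k-1}}=(-1)^{k-1}(2\pi i)^{k-1}$, $\overline{(z-t)^{k-2}}=(-\bar z-(-\bar t))^{k-2}$ (since $k$ is even), and $\overline{dt}=-d(-\bar t)$. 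Together these should give
$$\overline{H_k(z,w)}=H_k(-\bar z,-\bar w)$$
for $\Im z$ large. The identity then extends by analytic continuation along reflected paths; note that reflection maps the orbit $\Gamma w$ to $\Gamma(-\bar w)$.

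\textbf{Step 2: pass to $H_k^\bullet$.} Since $\overline{C_k}=-C_k$, \eqref{eq:Pi-conjugation} gives $\overline{C_k\Pi_j^{(k)}(w)}=(-1)^jC_k\Pi_j^{(k)}(-\bar w)$. Combined with $\overline{z^{k-2-j}}=(-1)^j(-\bar z)^{k-2-j}$, every term in \eqref{eq:H-bullet-expanded} obeys the same rule as $H_k$. It follows that
$$\overline{H_k^\bullet(z,w)}=H_k^\bullet(-\bar z,-\bar w).$$
The branch issue in the definition of $\Pi_0^{(k)}$ on $\Delta_\sigma$ must be checked here: $-\bar w$ for $w\in\Delta_\sigma$ lies in $\Delta_\sigma+1$, so the identity should be read via $T$-invariance of the periods in $w$, namely \eqref{eq:T-invariance-Pi}.

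\textbf{Step 3: Maass raising commutes with reflection.} Set $f^\vee(z)=\overline{f(-\bar z)}$. Then $D(f^\vee)=-(Df)^\vee$, while the term $\kappa f/(4\pi y)$ is real and even. Hence $\partial_\kappa(f^\vee)=-(\partial_\kappa f)^\vee$, and the sign $(-1)^{k/2-1}$ appears in each variable. Since the two variables contribute the same sign, the signs cancel in $\mathcal R_z\mathcal R_w$, and we obtain
$$\overline{\mathcal G^\bullet_{k/2}(z,w)}=\mathcal G^\bullet_{k/2}(-\bar z,-\bar w).$$

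\textbf{Step 4: evaluate at CM points.} Take $z=z_Q$, so that $-\bar z_Q=z_{Q^\vee}$, and $w=w_*$, so that $-\bar w_*=w_*-1$. The function $\mathcal G^\bullet$ is invariant under $w\mapsto w+1$ because $H_k$ and the periods are. Next, $D_{Q^\vee}=D_Q$. The factor $D_Q^{(k-2)/4}(-d)^{(k-2)/4}$ equals $(i\sqrt{|D_Q|})^{k/2-1}(i\sqrt d)^{k/2-1}=(-1)^{k/2-1}|D_Qd|^{(k-2)/4}$, which is real. Therefore
$$\overline{\alpha_Q}=\exp\bigl(\text{real factor}\cdot\mathcal G^\bullet(z_{Q^\vee},w_*)\bigr)=\alpha_{Q^\vee}.$$

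\textbf{Main obstacle.} The main obstacle is path dependence: $\mathcal G^\bullet$ is multivalued, and reflected paths differ from the chosen ones. I would handle this with Proposition~\ref{prop:CM-diagonal-monodromy}, which says the ambiguity lies in $2\pi i\mathbb Z$ and is therefore invisible after exponentiation. One also has to be careful that the raising operators in $w$ act on $\Pi_j$ as polynomials valid on the right domain.
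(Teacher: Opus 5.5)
Your proof is correct and follows the paper's route. The paper compresses your Steps 1--3 into the single assertion $\overline{\mathcal G_{k/2}^\bullet(z,w)}=\mathcal G_{k/2}^\bullet(-\bar z,-\bar w)$, citing \eqref{eq:Pi-conjugation} and the definition of $H_k^\bullet$. It then uses $-\bar w_*=w_*-1$, $T$-invariance in $w$, $z_{Q^\vee}=-\bar z_Q$, and the reality of $D_Q^{(k-2)/4}(-d)^{(k-2)/4}$, exactly as in your Step 4. Your appeal to Proposition~\ref{prop:CM-diagonal-monodromy} to handle path dependence is a reasonable point that the paper leaves implicit.

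One correction in Step 3. With $f^\vee(z)=\overline{f(-\bar z)}$ one has $D(f^\vee)=(Df)^\vee$, not $-(Df)^\vee$. Besides the sign coming from $\partial_z$, the conjugation also flips the factor $(2\pi i)^{-1}$. You can check this on $f=q$: there $q^\vee=q$ and $Dq=q$. Hence $\partial_\kappa(f^\vee)=(\partial_\kappa f)^\vee$ exactly, and no cancellation between the two variables is needed.

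Your conclusion survives only because your spurious sign appears $k/2-1$ times in each variable. The same rule applied in a single variable would contradict \eqref{relation:raised-periods-conjugation}. For example, it would force $\widehat\Pi_1^{(4)}(w_*)$ to be purely imaginary, whereas in fact $\widehat\Pi_1^{(4)}(w_*)=\frac{d+3}{4\pi\sqrt d}$.
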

	
	\begin{proof}
		By \eqref{eq:Pi-conjugation} and the definition of $H_k^\bullet$, we have
		$$
		\overline{\mathcal G_{k/2}^\bullet(z,w)}
		=\mathcal G_{k/2}^\bullet(-\bar z,-\bar w).
		$$
		Since $-\bar w_*=w_*-1$ and $\mathcal G_{k/2}^\bullet$ is $T$-invariant in the second variable, we obtain
		$\overline{\mathcal G_{k/2}^\bullet(z,w_*)}=\mathcal G_{k/2}^\bullet(-\bar z,w_*)$.
		Finally, $z_{Q^\vee}=-\bar z_Q$ and $D_Q^{(k-2)/4}(-d)^{(k-2)/4}\in\mathbb R$, hence
		$\alpha_{Q^\vee}=\overline{\alpha_Q}$.
	\end{proof}
	
	\begin{corollary}
		\label{cor:extended-GZ-ambiguous}
		If $Q^\vee$ is equivalent to $Q$, then
		$\alpha_Q\in\overline{\mathbb Q}^{\times}$ and
		$\alpha_Q/|\alpha_Q|\in\mu_{2N_k}$.
		In particular, $\alpha_Q$ is real for $k=4,8$. Thus Conjecture~\ref{conj:extended-gross-zagier-general} holds under the stated assumption, which is satisfied by every primitive class of order $1$ or $2$.
		\end{corollary}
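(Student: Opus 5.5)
Assume $Q^\vee$ is equivalent to $Q$. The plan is to combine Lemma~\ref{lem:CM-conjugation} with Proposition~\ref{prop:extended-CM-modulus} to pin down $\alpha_Q$ up to its modulus and a root of unity. Then I would use the classical logarithmic algebraicity of $G_{k/2}$ to show that the modulus itself is algebraic.

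First, since $z_{Q^\vee}=-\bar z_Q$ and $z_Q\notin\Gamma w_*$, we also have $z_{Q^\vee}\notin\Gamma w_*$, because $\Gamma w_*$ is stable under $z\mapsto-\bar z$. Also $D_{Q^\vee}=D_Q$. By Proposition~\ref{prop:extended-CM-modulus}, applied with $R$ corresponding to $w_*-1\in\Delta_\sigma$ and using $T$-invariance in $w$, the ratio $\alpha_{Q^\vee}/\alpha_Q$ lies in $\exp\bigl((-d)^{(k-2)/4}\Lambda_{k,w_*}\bigr)$. By Theorem~\ref{thm:CM-period-root-unity-general} this group is contained in $\mu_{N_k}$. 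Proposition~\ref{prop:CM-diagonal-monodromy} ensures that the choice of path does not matter. By Lemma~\ref{lem:CM-conjugation}, $\alpha_{Q^\vee}=\overline{\alpha_Q}$. Hence $\overline{\alpha_Q}/\alpha_Q=\zeta\in\mu_{N_k}$. Writing $\alpha_Q=|\alpha_Q|u$ with $|u|=1$, this gives $u^{-2}=\zeta$, so $u^{2N_k}=1$, i.e.\ $u\in\mu_{2N_k}$. For $k=4,8$ we have $N_k=1$, so $u=\pm1$ and $\alpha_Q$ is real.

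Second, by \eqref{eq:extended-CM-modulus}, $|\alpha_Q|^2=\exp\bigl(|D_Q d|^{(k-2)/4}G_{k/2}(z_Q,w_*)\bigr)$. The established Gross--Zagier result of Li and Bruinier--Li--Yang states that, for $k\in S$ (no cusp forms), $(D_QD_R)^{(k-2)/4}G_{k/2}(z_Q,w_R)$ equals $\log|\beta|$ for some algebraic $\beta$, up to a rational multiple. The main obstacle is matching normalizations: the power $|D_Qd|^{(k-2)/4}$ and the constant in front of $\log$ in the literature. Up to such a rational factor $r$, we get $|\alpha_Q|^2=|\beta|^{r}$. A rational power of the modulus of an algebraic number is algebraic, since $|\beta|^2=\beta\bar\beta\in\overline{\mathbb Q}$. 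Hence $|\alpha_Q|$ is algebraic and nonzero. I would check carefully that the sign of $D_Q^{(k-2)/4}(-d)^{(k-2)/4}$, which is real, is consistent with the positive real number $|D_Qd|^{(k-2)/4}$ used here.

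Therefore $\alpha_Q=|\alpha_Q|u$ with $u$ a root of unity, so $\alpha_Q\in\overline{\mathbb Q}^\times$. This proves Conjecture~\ref{conj:extended-gross-zagier-general} in this case. Finally, for a primitive $Q$ we have $[Q^\vee]=[Q]^{-1}$. If $[Q]$ has order $1$ or $2$, then $[Q]^{-1}=[Q]$, so $Q^\vee$ is equivalent to $Q$ and the hypothesis is satisfied.
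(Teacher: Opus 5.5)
Your proposal is correct and follows essentially the same route as the paper. Lemma~\ref{lem:CM-conjugation}, together with \eqref{eq:extended-CM-root-of-unity-ambiguity} and \eqref{eq:CM-period-uniform-bound}, gives $\overline{\alpha_Q}/\alpha_Q\in\mu_{N_k}$, hence $\alpha_Q/|\alpha_Q|\in\mu_{2N_k}$; then \eqref{eq:extended-CM-modulus}, combined with the known logarithmic algebraicity of CM values of $G_{k/2}$, gives $|\alpha_Q|^2\in\overline{\mathbb Q}^\times$. The details you add, which the paper leaves implicit, are correct: the stability of $\Gamma w_*$ under $z\mapsto-\bar z$, the reduction to $w_*-1\in\Delta_\sigma$, and the sign identity $D_Q^{(k-2)/4}(-d)^{(k-2)/4}=(-1)^{k/2-1}|D_Qd|^{(k-2)/4}$.
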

	
	\begin{proof}
		By Lemma~\ref{lem:CM-conjugation}, we have $\alpha_{Q^\vee}=\overline{\alpha_Q}$. Since $Q^\vee$ is equivalent to $Q$, equations \eqref{eq:extended-CM-root-of-unity-ambiguity} and \eqref{eq:CM-period-uniform-bound} imply that
		$\overline{\alpha_Q}/\alpha_Q\in\mu_{N_k}$, and hence
		$\alpha_Q^{2N_k}=|\alpha_Q|^{2N_k}$.
		
		On the other hand, equation \eqref{eq:extended-CM-modulus} and the known logarithmic algebraicity result for CM values of the higher Green function \cite{BEY21,BLY25} give
		$|\alpha_Q|^2\in\overline{\mathbb Q}^{\times}$. Therefore
		$\alpha_Q\in\overline{\mathbb Q}^{\times}$ and
		$\alpha_Q/|\alpha_Q|\in\mu_{2N_k}$.
	\end{proof}
	
	If $Q^\vee$ is not equivalent to $Q$, we still have $\overline{\alpha_Q}=\alpha_{Q^\vee}$. The following example shows that $\alpha_Q$ need not be real.
	
	\begin{example}
		\label{ex:extended-gross-zagier-k4-D23-d7}
		Let $k=4$, $w_*=(1+i\sqrt7)/2$, and
		$Q(X,Y)=2X^2-XY+3Y^2$. Then $D_Q=-23$, $z_Q=(1+i\sqrt{23})/4$, and $D_Q^{1/2}(-7)^{1/2}=-\sqrt{161}$.
		
		By Theorem~\ref{thm:exact-period-module} and the value of
		$\widehat\Pi_1^{(4)}(w_*)$, we have
		$(-7)^{1/2}\Lambda_{4,w_*}=10\pi i\mathbb Z$.
		Thus $\alpha_Q$ depends only on the equivalence class of $Q$. A numerical computation gives
		$$
		\alpha_Q\approx
		5.6541585260369068\cdot10^{-26}
		-2.0486128907112288\cdot10^{-25}i.
		$$
		
		Set
		$$
		\beta:=
		\left(11775+928\sqrt{161}\right)^5
		\left(\frac{(12+\sqrt{161})^2}{17}\right)^{-3}
		\left(\frac{(25+2\sqrt{161})^2}{19}\right)^2\alpha_Q
		$$
		and define
		$$
		\refstepcounter{equation}\label{eq:numerical-complex-cubic-alpha-bullet-D23-d7}
		\begin{aligned}
			P(X):={}&X^3+
			\frac{687414606829246827+54195363611526117\sqrt{161}}
			{277123906250}X^2\\
			&+\frac{1326089276410904661-104627302808963931\sqrt{161}}
			{942221281250}X
			+\frac{99+4\sqrt{161}}{85}.
		\end{aligned}
		\eqno(\theequation)
		$$
		The numerical computation gives $|P(\beta)|<5.1\cdot10^{-143}$.
	\end{example}

	\begin{appendix}
		\section[The zeroth holomorphic period]{The holomorphic period $\Pi_0^{(k)}$}
		\label{appendix:Pi0}
		
		The period $\Pi_0^{(k)}(w)$ is defined by integration along the positive imaginary axis. Throughout this appendix, we assume that $w\in\Delta_\sigma$. The values $H_k(i,w)$ and $\partial_zH_k(i,w)$ are taken on the branch obtained by analytic continuation along the positive imaginary axis.
		
		To compute $\Pi_0^{(k)}(w)$, we use the $S$-transformation formula for $H_k$ at $z=i$. For $k=6,10,14$, the term involving $\Pi_0^{(k)}(w)$ cancels because $i^{k-2}=1$, so we differentiate the formula first. The required values of $H_k(i,w)$ and $\partial_zH_k(i,w)$ are obtained from the following relation between $H_k(z,w)$ and $H_k(w,z)$.
		
		\begin{lemma}
			\label{lemma:H-reciprocity}
			If $\Im z\Im w>1$ and $0<\Re(z-w)<1$, then
			$$
			\refstepcounter{equation}\label{eq:H-reciprocity}
			C_k^{-1}\bigl(H_k(z,w)-H_k(w,z)\bigr)
			=\frac{B_{k-1}(z-w)}{k-1}
			+\frac{2k}{B_kC_k}
			\sum_{n\ge1}\frac{\sigma_{k-1}(n)}{n^{k-1}}
			\bigl(q_w^n-q_z^n\bigr),
			\eqno(\theequation)
			$$
			where $B_n(X)$ denotes the Bernoulli polynomial defined by
			$\frac{te^{Xt}}{e^t-1}=\sum_{n=0}^\infty B_n(X)\frac{t^n}{n!}.$ Here $H_k(z,w)$ and $H_k(w,z)$ are obtained by analytic continuation of their respective Fourier expansions.
		\end{lemma}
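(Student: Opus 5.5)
The plan is to read off \eqref{eq:H-reciprocity} from the polylogarithmic expansion \eqref{zag:H} in Proposition~\ref{prop:polylog-expansion}(1), written once for $H_k(z,w)$ and once with the roles of $z$ and $w$ exchanged. The key structural fact is that the double sum
$$\sum_{r,s\ge1}\gamma^{(k)}(-1,rs)\operatorname{Li}_{k-1}(q_z^rq_w^s)$$
is symmetric in $(z,w)$, since $\gamma^{(k)}(-1,rs)$ depends only on the product $rs$. It therefore cancels in the difference $H_k(z,w)-H_k(w,z)$.

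The Eisenstein terms give, after rewriting $\sum_{r\ge1}\operatorname{Li}_{k-1}(q^r)=\sum_{n\ge1}\sigma_{k-1}(n)q^n/n^{k-1}$ as in the proof of Proposition~\ref{prop:polylog-expansion},
$$\frac{2k}{B_k}\sum_{n\ge1}\frac{\sigma_{k-1}(n)}{n^{k-1}}\bigl(q_w^n-q_z^n\bigr).$$
Dividing by $C_k$ gives exactly the second term on the right of \eqref{eq:H-reciprocity}. What remains is
$$-\operatorname{Li}_{k-1}(e^{2\pi i u})+\operatorname{Li}_{k-1}(e^{-2\pi i u}),\qquad u=z-w.$$
For this I will use the classical inversion (Jonquière/Hurwitz) formula
$$\operatorname{Li}_n(e^{2\pi iu})+(-1)^n\operatorname{Li}_n(e^{-2\pi iu})=-\frac{(2\pi i)^n}{n!}B_n(u).$$
For real $0<u<1$ this is the Fourier expansion of the Bernoulli polynomial. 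Taking $n=k-1$, which is odd, the difference becomes $\frac{(2\pi i)^{k-1}}{(k-1)!}B_{k-1}(z-w)=C_k\frac{B_{k-1}(z-w)}{k-1}$, which is the first term.

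The main obstacle is the domain and branch bookkeeping, because \eqref{zag:H} for $H_k(z,w)$ is proved only for $y_z>y_w$, while for $H_k(w,z)$ it requires $y_w>y_z$.

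I will handle this by analytic continuation. By the estimate used in the proof of Proposition~\ref{prop:polylog-expansion}(2), the double sum converges locally uniformly on the whole region $y_zy_w>1$. The Eisenstein sums converge there as well, since $|q_z|,|q_w|<1$. The only term needing continuation is $\operatorname{Li}_{k-1}(q_z/q_w)$ across $|q_z/q_w|=1$.

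The function $u\mapsto \operatorname{Li}_{k-1}(e^{2\pi iu})$ is holomorphic in $u$ except where $e^{2\pi iu}\in[1,\infty)$, that is, where $\operatorname{Re}u\in\mathbb Z$. Hence it extends holomorphically to the strip $0<\operatorname{Re}u<1$, and the same holds for $\operatorname{Li}_{k-1}(e^{-2\pi iu})$. Both sides of the inversion formula are then holomorphic on the strip and agree on $(0,1)$, so they agree throughout.

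The region $\{y_zy_w>1,\ 0<\operatorname{Re}(z-w)<1\}$ is connected and contains points with $y_z>y_w$ as well as points with $y_z<y_w$. Therefore the continued expansions of $H_k(z,w)$ and $H_k(w,z)$ are well defined on it, which is the meaning stipulated in the lemma. The identity \eqref{eq:H-reciprocity} then holds on the whole region by the identity theorem.
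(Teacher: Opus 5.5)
Your proposal is correct and follows the paper's proof. You cancel the symmetric double sum in \eqref{zag:H}, collect the Eisenstein terms, and turn $-\operatorname{Li}_{k-1}(q_z/q_w)+\operatorname{Li}_{k-1}(q_w/q_z)$ into $C_kB_{k-1}(z-w)/(k-1)$ using the polylogarithm inversion formula on the strip $0<\Re(z-w)<1$; your explicit discussion of the branch and of the continuation across $|q_z/q_w|=1$ spells out what the paper condenses into its remark on $\log(-q_z/q_w)$.
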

		\begin{proof}
			In \eqref{zag:H}, the double sum is symmetric in $z$ and $w$. Hence, after analytic continuation to the region $\Im z\Im w>1$,
			$$
			H_k(z,w)-H_k(w,z)
			=-\operatorname{Li}_{k-1}(q_z/q_w)
			+\operatorname{Li}_{k-1}(q_w/q_z)
			+\frac{2k}{B_k}\sum_{n\ge1}\frac{\sigma_{k-1}(n)}{n^{k-1}}(q_w^n-q_z^n).
			$$
			For $0<\Re(z-w)<1$, the chosen branch satisfies
			$\log(-q_z/q_w)=2\pi i(z-w)-\pi i$. The inversion formula for the polylogarithm \cite{Lewin81},
			$$
			\operatorname{Li}_{k-1}(q_z/q_w)-\operatorname{Li}_{k-1}(q_w/q_z)
			=-\frac{(2\pi i)^{k-1}}{(k-1)!}B_{k-1}(z-w),
			$$
			then gives \eqref{eq:H-reciprocity}.
		\end{proof}
		
		We shall also need the following identity for the Eichler integral of the Eisenstein series at $i$. Set $\varepsilon_k=0$ for $k=4,8$ and $\varepsilon_k=1$ for $k=6,10,14$.
		
		\begin{lemma}
			\label{lemma:Eichler-i-combination}
			For $k\in S$, we have
			$$
			\refstepcounter{equation}\label{eq:Eichler-i-combination}
			\sum_{n\ge1}\frac{\sigma_{k-1}(n)}{n^{k-1}}e^{-2\pi n}
			+\frac{4\pi\varepsilon_k}{k-2}
			\sum_{n\ge1}\frac{\sigma_{k-1}(n)}{n^{k-2}}e^{-2\pi n}
			=r_k\pi^{k-1}-\frac{\zeta(k-1)}2,
			\eqno(\theequation)
			$$
			where
			$$
			\begin{gathered}
				r_4=\frac7{2^3\cdot3^2\cdot5},\quad
				r_6=\frac{13}{2^3\cdot3^3\cdot5\cdot7},\quad
				r_8=\frac{19}{2^3\cdot3^4\cdot5^2\cdot7},\\
				r_{10}=\frac{127}{2^4\cdot3^5\cdot5^2\cdot7\cdot11},\quad
				r_{14}=\frac{3989}{2^2\cdot3^8\cdot5^3\cdot7^2\cdot11\cdot13}.
			\end{gathered}
			$$
		\end{lemma}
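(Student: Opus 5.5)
The plan is to derive \eqref{eq:Eichler-i-combination} from the period relation of the Eichler integral of the Eisenstein series $E_k$, evaluated at the fixed point $z=i$ of $S$. This is the same device used in this appendix for $H_k$: evaluate directly at $i$ for $k=4,8$, and differentiate first for $k=6,10,14$.

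Set
$$\mathcal E_k(z):=\sum_{n\ge1}\frac{\sigma_{k-1}(n)}{n^{k-1}}q^n,$$
so that $D^{k-1}\mathcal E_k=\sum_{n\ge 1}\sigma_{k-1}(n)q^n$, which equals $E_k-1$ up to the factor $-B_k/(2k)$.

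\emph{Step 1: the period relation.} I first establish the classical Ramanujan-type identity
$$
z^{k-2}\mathcal E_k(-1/z)-\mathcal E_k(z)=\frac{\zeta(k-1)}{2}\bigl(z^{k-2}-1\bigr)+\frac{(2\pi i)^{k-1}}{2}\sum_{m=0}^{k/2}\frac{B_{2m}B_{k-2m}}{(2m)!\,(k-2m)!}\,z^{2m-1}.
$$
The overall signs and normalization are to be fixed in the course of the argument. To prove it, take the Mellin transform of $\mathcal E_k(iy)$, which is
$$(2\pi)^{-s}\Gamma(s)\,\zeta(s+k-1)\,\zeta(s).$$
Use the functional equation of $\zeta(s)\zeta(s-k+1)$ under $s\mapsto k-s$ and shift the contour.

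\emph{Step 2: the residues.} The residues of this Mellin transform produce the correction terms in Step 1.
\begin{itemize}
\item The poles at $s=0$ and $s=2-k$ give the $\zeta(k-1)$ terms.
\item The pole at $s=1$ gives a term involving $\zeta(k)$, which is rational times $\pi^k$ (equivalently a $B_k$ term).
\item The poles of $\Gamma(s)$ at $s=-1,-3,\dots,3-k$ give $\zeta(1-2m)\zeta(2m+2-k)$, which are products of Bernoulli numbers.
\end{itemize}
Alternatively, one can quote this relation from the standard theory of period polynomials of Eisenstein series. In either case, the $\zeta(k-1)$ coefficient and the Bernoulli coefficients must be checked carefully against the normalization of $\mathcal E_k$.

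\emph{Step 3: the case $k\equiv 0\pmod 4$.} Put $z=i$. Since $i^{k-2}=-1$, the left side of Step 1 equals $-2\mathcal E_k(i)$, where $\mathcal E_k(i)=\sum_{n\ge 1}\sigma_{k-1}(n)n^{1-k}e^{-2\pi n}$. The right side is $-\zeta(k-1)$ plus $\pi^{k-1}$ times the rational number $\tfrac12\sum_m B_{2m}B_{k-2m}\,i^{2m-1}(2\pi i)^{k-1}/\bigl((2m)!(k-2m)!\bigr)\cdot \pi^{1-k}$. This number is real, since $i^{2m-1}i^{k-1}=i^{k+2m-2}\in\{\pm1\}$. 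Dividing by $-2$ gives the claimed formula with $\varepsilon_k=0$, and $r_4$, $r_8$ are read off as finite rational sums of Bernoulli products.

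\emph{Step 4: the case $k\equiv 2\pmod 4$.} Now $i^{k-2}=1$, so both the $\mathcal E_k(i)$ term and the $\zeta(k-1)$ term cancel at $z=i$. I therefore differentiate the relation in $z$ before evaluating. Differentiating the left side gives
$$(k-2)z^{k-3}\mathcal E_k(-1/z)+z^{k-4}\mathcal E_k'(-1/z)-\mathcal E_k'(z),\qquad \mathcal E_k'=2\pi i\sum_{n\ge 1}\sigma_{k-1}(n)n^{2-k}q^n.$$
At $z=i$ we have $i^{k-3}=-i$ and $i^{k-4}=-1$. Hence the left side becomes
$$-i(k-2)\mathcal E_k(i)-2\mathcal E_k'(i)=-i(k-2)\Bigl[\mathcal E_k(i)+\frac{4\pi}{k-2}\sum_{n\ge 1}\frac{\sigma_{k-1}(n)}{n^{k-2}}e^{-2\pi n}\Bigr].$$
This is exactly the combination in \eqref{eq:Eichler-i-combination} with $\varepsilon_k=1$. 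On the right side, the derivative of $\tfrac{\zeta(k-1)}{2}(z^{k-2}-1)$ at $i$ is $\tfrac{\zeta(k-1)}{2}(k-2)(-i)$. The differentiated polynomial part is $i\pi^{k-1}$ times a rational number. Dividing by $-i(k-2)$ yields the formula, and $r_6$, $r_{10}$, $r_{14}$ come out as rational Bernoulli sums.

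\emph{Main obstacle.} The main difficulty is exact constant bookkeeping: fixing the normalization and sign of the period relation in Step 1, including the $m=0$ and $m=k/2$ terms. Those two terms involve $B_0B_k$ and give the $z^{-1}$ and $z^{k-1}$ contributions coming from the constant term of $E_k$. I would fix these constants by checking numerically for $k=4$, where the claim is $r_4=7/360$, before computing the remaining $r_k$.
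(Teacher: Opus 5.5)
Your route is the paper's. The paper quotes Ramanujan's formula (Berndt--Straub, (48)), which is exactly the $S$-transformation law of your $\mathcal E_k$ written with $z=i\alpha/\pi$ and $-1/z=i\beta/\pi$. It sets $\alpha=\beta=\pi$ for $k=4,8$, and for $k=6,10,14$ it differentiates along $\alpha\beta=\pi^2$ at $\alpha=\pi$; these are your Steps 3 and 4. The only difference is that you re-derive Ramanujan's formula by Mellin inversion instead of citing it.

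\textbf{The sign in Step 1 is wrong.} The right-hand side you wrote is the sum of the residues of $\Lambda(s)y^{-s}$, where $\Lambda(s)=(2\pi)^{-s}\Gamma(s)\zeta(s)\zeta(s+k-1)$. The contour shift, using $\Lambda(s)=-(-1)^{k/2}\Lambda(2-k-s)$, gives $\mathcal E_k(z)-z^{k-2}\mathcal E_k(-1/z)=\sum\text{residues}$. So the correct identity is
\[
z^{k-2}\mathcal E_k(-1/z)-\mathcal E_k(z)=\frac{\zeta(k-1)}{2}\bigl(1-z^{k-2}\bigr)-\frac{(2\pi i)^{k-1}}{2}\sum_{m=0}^{k/2}\frac{B_{2m}B_{k-2m}}{(2m)!\,(k-2m)!}\,z^{2m-1}.
\]
With your sign, Step 3 would give $\mathcal E_4(i)=\zeta(3)/2-7\pi^3/360<0$. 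That is impossible for a series of positive terms, so your planned $k=4$ check would catch it. With the corrected sign, evaluation at $z=i$ gives $r_4=7/360$ and $r_8=19/113400$. The differentiated relation gives $r_6=13/7560$. All three match the lemma.

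\textbf{Two smaller slips in Step 2.}
\begin{itemize}
\item The pole at $s=1-2m$ contributes $\zeta(1-2m)\zeta(k-2m)$. Your $\zeta(2m+2-k)$ is a trivial zero whenever $2m<k-2$.
\item You must also include the pole at $s=1-k$, alongside $s=1,0,2-k$. It produces the $B_kB_0\,z^{k-1}$ term.
\end{itemize}
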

		
		\begin{proof}
			We use Ramanujan's identity \cite[(48)]{BerndtStraub16} together with
			$$
			\sum_{n\ge1}\frac{1}{n^{k-1}(e^{2\alpha n}-1)}
			=\sum_{n\ge1}\frac{\sigma_{k-1}(n)}{n^{k-1}}e^{-2\alpha n}.
			$$
			For $k=4,8$, we set $\alpha=\beta=\pi$. For $k=6,10,14$, we substitute $\beta=\pi^2/\alpha$, differentiate with respect to $\alpha$, and evaluate at $\alpha=\pi$. Simplifying these identities gives \eqref{eq:Eichler-i-combination}, where the constants $r_k$ are obtained from the finite sum of Bernoulli numbers in Ramanujan's identity.
		\end{proof}

		Combining these results, we obtain the following formula for $\Pi_0^{(k)}(w)$. Recall that $C_k=(2\pi i)^{k-1}/(k-2)!$.
		
		\begin{proposition}
			\label{thm:Pi0-closed-form}
			For $k\in S$ and $w\in\Delta_\sigma$, we have
			$$
			\refstepcounter{equation}\label{eq:Pi0-closed-form}
			\Pi_0^{(k)}(w)
			=w+\frac12-\frac{B_{k-1}(w+1)}{k-1}
			+\frac{2k}{B_kC_k}
			\left(
			\frac{\zeta(k-1)}2+
			\sum_{n\ge1}\frac{\sigma_{k-1}(n)}{n^{k-1}}q_w^n
			\right).
			\eqno(\theequation)
			$$
			In particular,
			$D_w^{k-1}\Pi_0^{(k)}(w)=-C_k^{-1}E_k(w)$.
		\end{proposition}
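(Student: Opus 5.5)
The plan is to evaluate the $S$-transformation formula \eqref{eq:S-transformation-hw} at the fixed point $z=i$ and to compute $H_k(i,w)$ from the reciprocity law of Lemma~\ref{lemma:H-reciprocity}. At $z=i$, formula \eqref{eq:S-transformation-hw} reads
$$
\bigl(i^{k-2}-1\bigr)H_k(i,w)=C_k\sum_{j=0}^{k-2}(-1)^j\binom{k-2}{j}i^{k-2-j}\Pi_j^{(k)}(w).
$$
Two cases arise. For $k=4,8$ we have $i^{k-2}=-1$, so the $j=0$ term is $-C_k\Pi_0^{(k)}(w)$ and the left side is $-2H_k(i,w)$. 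For $k=6,10,14$ we have $i^{k-2}=1$, so the $\Pi_0$ term cancels. In that case I first differentiate \eqref{eq:S-transformation-hw} in $z$ and then set $z=i$. This gives a relation involving $H_k(i,w)$, $\partial_zH_k(i,w)$, and $(k-2)i^{k-3}\Pi_0^{(k)}(w)$ plus the other periods.

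In both cases, the periods $\Pi_j^{(k)}$ with $1\le j\le k-3$ are the explicit polynomials determined by Proposition~\ref{thm:Pi-closed-form-general}. The period $\Pi_{k-2}^{(k)}=-\Pi_0^{(k)}$ comes from \eqref{eq:Pi-reflection}, and it contributes to the same unknown as $\Pi_0$.

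Next I compute $H_k(i,w)$ and $\partial_zH_k(i,w)$ for $w\in\Delta_\sigma$. Lemma~\ref{lemma:H-reciprocity} requires $\Im z\Im w>1$ and $0<\Re(z-w)<1$. At $z=i$, with $-1<\Re w<0$, the real-part condition holds, but the imaginary-part condition may fail. I would therefore prove the identity first on the open subset of $\Delta_\sigma$ where $\Im w>1$, and then extend it to all of $\Delta_\sigma$ by analytic continuation in $w$. Both sides are holomorphic there: the left side because the orbit $\Gamma w$ avoids the imaginary axis, and the right side is an explicit $q_w$-series plus a polynomial.

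On that subset, reciprocity expresses $H_k(i,w)$ through $H_k(w,i)$, the polynomial $B_{k-1}(i-w)/(k-1)$, and the Eisenstein Eichler series in $q_w$ and $q_i=e^{-2\pi}$. The term $H_k(w,i)$ is again an Eichler integral in $w$, now with base point $i$. I would handle it with the $q$-expansion \eqref{zag:H}, where the double sum is symmetric, so the difference $H_k(z,w)-H_k(w,z)$ is exactly what the lemma controls. I expect the $H_k(w,i)$ contributions to cancel once \eqref{eq:S-transformation-hw} is also used in the $w$-slot at the fixed point. The numerical series $\sum\sigma_{k-1}(n)n^{1-k}e^{-2\pi n}$ (and its derivative companion for $k\equiv2\pmod4$) is then replaced by Lemma~\ref{lemma:Eichler-i-combination}. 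There the $\zeta(k-1)/2$ survives and the $r_k\pi^{k-1}$ piece combines with the rational polynomial terms.

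Finally I simplify. The Bernoulli polynomial evaluated at $i-w$, together with the polynomial contributions of the other periods, should collapse to $w+\tfrac12-B_{k-1}(w+1)/(k-1)$. This uses $B_{k-1}(1-x)=-B_{k-1}(x)$ for odd index and the translation $B_{k-1}(x+1)=B_{k-1}(x)+(k-1)x^{k-2}$. The corollary then follows from a direct check: $D_w^{k-1}$ kills the polynomial part, since its degree is at most $k-2$, and sends $\sum\sigma_{k-1}(n)n^{1-k}q_w^n$ to $\sum\sigma_{k-1}(n)q_w^n=-\tfrac{B_k}{2k}(E_k-1)$. The leftover constant must vanish, which is consistent only because $D_w^{k-1}$ annihilates the constant $\zeta(k-1)/2$ as well.

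The main obstacle will be the bookkeeping at $z=i$, that is, verifying that the $\pi^{k-1}$-rational constants cancel exactly against $r_k$ and the $i$-powers. As a safeguard I would first verify the formula independently up to a constant: differentiating $\Pi_0^{(k)}$ via its integral definition and \eqref{eq:contour-identity-general} gives the $D_w^{k-1}$ identity, and then $z=i$ only serves to fix the remaining polynomial of degree at most $k-2$, which cuts the cancellations needed.
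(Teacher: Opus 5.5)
Your main line is the paper's proof: evaluate \eqref{eq:S-transformation-hw}, or its $z$-derivative when $k\equiv2\pmod4$, at $z=i$; rewrite $H_k(i,w)$ via Lemma~\ref{lemma:H-reciprocity} for $\Im w>1$; remove the $H_k(w,i)$ terms; apply Lemma~\ref{lemma:Eichler-i-combination}; check a finite polynomial identity; and continue analytically to $\Delta_\sigma$.

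The step you only ``expect'' should be stated precisely. It does not come from the first-slot law \eqref{eq:S-transformation-hw}, which carries a period polynomial. It comes from exact weight-$(2-k)$ modularity in the second variable. For $\Im w>1$ and $z$ near $i$ one has $H_k(w,z)=-\sum_m g_{-m}^{(k)}(z)q_w^m/m^{k-1}$. The fixed-point relation for each $g_{-m}^{(k)}$ gives $g_{-m}^{(k)}(i)=0$ for $k=4,8$ and $g_{-m}^{(k)}(i)-\frac{2i}{k-2}(g_{-m}^{(k)})'(i)=0$ for $k=6,10,14$. So these contributions vanish outright; they do not cancel against anything.

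There is, however, a genuine error in your derivation of the ``in particular'' claim. The polynomial $w+\frac12-\frac{B_{k-1}(w+1)}{k-1}$ has degree $k-1$, not at most $k-2$, so $D_w^{k-1}$ does not annihilate it. Its leading term $-\frac{w^{k-1}}{k-1}$ contributes $-\frac{(k-2)!}{(2\pi i)^{k-1}}=-C_k^{-1}$. The series contributes $\frac{2k}{B_kC_k}\sum_n\sigma_{k-1}(n)q_w^n=-C_k^{-1}\bigl(E_k(w)-1\bigr)$. Only the sum of the two gives $-C_k^{-1}E_k(w)$. As written, your computation yields $-C_k^{-1}E_k+C_k^{-1}$. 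The remark that $D_w^{k-1}$ kills $\zeta(k-1)/2$ does not remove this leftover constant, since the two are unrelated.

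Your proposed safeguard also fails. In \eqref{eq:contour-identity-general}, both $\Pi_0^{(k)}$ and $\Pi_{k-2}^{(k)}$ have coefficient exactly $z^{k-2}$, so by \eqref{eq:Pi-reflection} they cancel. The contour identity therefore contains no information about $\Pi_0^{(k)}$, as the paper notes, and cannot produce the $D_w^{k-1}$ identity independently. This is precisely why the evaluation at the fixed point $z=i$ is needed at all.
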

		
		\begin{proof}
			Assume that $\Im w>1$, and set $\varepsilon_k=0$ for $k=4,8$ and $\varepsilon_k=1$ for $k=6,10,14$. Evaluating \eqref{eq:S-transformation-hw} at $z=i$ in the first case and its derivative with respect to $z$ at $z=i$ in the second, we obtain from \eqref{eq:Pi-reflection}
			$$
			\refstepcounter{equation}\label{eq:Pi0-at-i-corrected}
			\Pi_0^{(k)}(w)
			=C_k^{-1}\left(
			H_k(i,w)-\frac{2\varepsilon_ki}{k-2}\partial_zH_k(i,w)
			\right)-\beta_k(w),
			\eqno(\theequation)
			$$
			where
			$$
			\beta_k(w)=
			\begin{cases}
				-\dfrac12\displaystyle\sum_{j=1}^{k-3}
				(-1)^j\binom{k-2}{j}i^{k-2-j}\Pi_j^{(k)}(w),
				&k=4,8,\\[6pt]
				\dfrac1{k-2}\displaystyle\sum_{j=1}^{k-3}
				(-1)^j\binom{k-2}{j}(k-2-j)i^{k-2-j}\Pi_j^{(k)}(w),
				&k=6,10,14.
			\end{cases}
			$$
			
			Since $g_{-n}^{(k)}$ has weight $2-k$, its transformation under $S$ at the fixed point $i$ gives
			$$
			g_{-n}^{(k)}(i)-\frac{2\varepsilon_ki}{k-2}(g_{-n}^{(k)})'(i)=0.
			$$
			Consequently, the Fourier expansion of $H_k(w,z)$ yields
			$$
			\left.
			\left(
			1-\frac{2\varepsilon_ki}{k-2}\frac{\partial}{\partial z}
			\right)H_k(w,z)
			\right|_{z=i}=0.
			$$
			
			Applying the same differential operator to \eqref{eq:H-reciprocity} and using Lemma~\ref{lemma:Eichler-i-combination}, we find
			$$
			\begin{aligned}
				\Pi_0^{(k)}(w)
				={}&\frac{B_{k-1}(i-w)}{k-1}
				-\frac{2\varepsilon_ki}{k-2}B_{k-2}(i-w)-\beta_k(w)\\
				&+\frac{2k}{B_kC_k}
				\left(
				\frac{\zeta(k-1)}2+
				\sum_{n\ge1}\frac{\sigma_{k-1}(n)}{n^{k-1}}q_w^n
				-r_k\pi^{k-1}
				\right).
			\end{aligned}
			$$
			The explicit formulas for $\Pi_j^{(k)}(w)$ and $r_k$ give
			$$
			\frac{B_{k-1}(i-w)}{k-1}
			-\frac{2\varepsilon_ki}{k-2}B_{k-2}(i-w)
			-\beta_k(w)
			-\frac{2k(k-2)!r_k}{B_k(2i)^{k-1}}
			=
			w+\frac12-\frac{B_{k-1}(w+1)}{k-1}.
			$$
			Hence \eqref{eq:Pi0-closed-form} holds for $\Im w>1$. Since both sides are holomorphic on $\Delta_\sigma$, the identity extends throughout $\Delta_\sigma$. Applying $D_w^{k-1}$ gives
			$D_w^{k-1}\Pi_0^{(k)}(w)=-C_k^{-1}E_k(w)$.
		\end{proof}

		\section{Eichler periods on $\Delta_\sigma$ and Maass-raised periods at CM points}
		\label{appendix:period-tables}
		
		\subsection{Periods on $\Delta_\sigma$}
		\label{appendix:explicit-periods}
		
		For $w\in\Delta_\sigma\setminus\Gamma\cdot\{i,\rho\}$, comparing coefficients in \eqref{eq:contour-identity-general} gives the following formulas. The cases $k=4,6$ were given in Example~\ref{ex:Pi-closed-forms}.
		
		\noindent\textbf{$k=8$.}
		$$
		\begin{aligned}
			\Pi_1^{(8)}(w)&=\frac1{12}\bigl(2w^6+6w^5+5w^4+5w^2+6w+2\bigr),\\
			\Pi_2^{(8)}(w)&=-\frac1{10}w(w-1)(w+1)(w+2)(2w+1),\\
			\Pi_3^{(8)}(w)&=\frac12w^2(w+1)^2.
		\end{aligned}
		$$
		
		\noindent\textbf{$k=10$.}
		$$
		\begin{aligned}
			\Pi_1^{(10)}(w)&=\frac1{24}(w^2+w+1)
			\bigl(3w^6+9w^5+2w^4-11w^3+2w^2+9w+3\bigr),\\
			\Pi_2^{(10)}(w)&=-\frac1{14}w(w-1)(w+1)(w+2)(2w+1)(w^2+w+1),\\
			\Pi_3^{(10)}(w)&=\frac16w^2(w+1)^2(w^2+w+1).
		\end{aligned}
		$$
		
		\noindent\textbf{$k=14$.}
		$$
		\begin{aligned}
			\Pi_1^{(14)}(w)&=\frac1{24}
			\bigl(2w^{12}+12w^{11}+22w^{10}-33w^8+44w^6-33w^4+22w^2+12w+2\bigr),\\
			\Pi_2^{(14)}(w)&=-\frac1{66}w(w-1)(w+1)(w+2)(2w+1)
			\bigl(3w^6+9w^5+5w^4-5w^3+5w^2+9w+3\bigr),\\
			\Pi_3^{(14)}(w)&=\frac1{20}w^2(w+1)^2
			\bigl(2w^6+6w^5+w^4-8w^3+w^2+6w+2\bigr),\\
			\Pi_4^{(14)}(w)&=-\frac1{18}w^3(w-1)(w+1)^3(w+2)(2w+1),\\
			\Pi_5^{(14)}(w)&=\frac18w^4(w+1)^4.
		\end{aligned}
		$$

		\subsection{Values of the raised periods at the CM point $w_*$}
		\label{appendix:explicit-raised-periods}
		
		Let $d=4t+3$, $t\in\mathbb Z_{>0}$, and $w_*=(1+i\sqrt d)/2$. Substituting $w_*-1=-1/2+i\sqrt d/2$ into the explicit formulas for $\Pi_j^{(k)}$ and applying the Maass raising operator, we obtain the following nonzero independent raised periods.
		
		\noindent\textbf{$k=4$.}
		$$
		\widehat\Pi_1^{(4)}(w_*)
		=\frac{2t+3}{2\pi\sqrt d}.
		$$
		
		\noindent\textbf{$k=6$.}
		$$
		\widehat\Pi_1^{(6)}(w_*)
		=\frac{3(2t^2+4t+3)}{8\pi^2d}.
		$$
		
		\noindent\textbf{$k=8$.}
		$$
		\widehat\Pi_1^{(8)}(w_*)
		=\frac{5(t+1)(t^2+t+1)}{2\pi^3d^{3/2}},
		\qquad
		\widehat\Pi_3^{(8)}(w_*)
		=\frac{3(2t^2+2t+1)}{4\pi^3d^{3/2}}.
		$$
		
		For weights $10$ and $14$, set
		$$
		\begin{aligned}
			p_{10,1}(t)&=30t^4+80t^3+84t^2+56t+21,\\
			p_{10,3}(t)&=5t^3+12t^2+8t+3,\\
			p_{14,1}(t)&=42t^6+168t^5+294t^4+272t^3+150t^2+60t+15,\\
			p_{14,3}(t)&=12t^5+30t^4+40t^3+30t^2+12t+3,\\
			p_{14,5}(t)&=14t^4+32t^3+30t^2+12t+3.
		\end{aligned}
		$$
		
		\noindent\textbf{$k=10$.}
		$$
		\widehat\Pi_1^{(10)}(w_*)
		=\frac{7p_{10,1}(t)}{16\pi^4d^2},
		\qquad
		\widehat\Pi_3^{(10)}(w_*)
		=\frac{p_{10,3}(t)}{2\pi^4d^2}.
		$$
		
		\noindent\textbf{$k=14$.}
		$$
		\widehat\Pi_1^{(14)}(w_*)
		=\frac{165p_{14,1}(t)}{8\pi^6d^3},
		\qquad
		\widehat\Pi_3^{(14)}(w_*)
		=\frac{63p_{14,3}(t)}{8\pi^6d^3},
		\qquad
		\widehat\Pi_5^{(14)}(w_*)
		=\frac{45p_{14,5}(t)}{16\pi^6d^3}.
		$$

		\subsection{Denominators of generators of the normalized period group}
		\label{appendix:general-CM-denominators}
		
		Let $R=[A,B,C]$, $D_R<0$, and $w_R=(B+\sqrt{D_R})/(2A)$. Now we find explicit rational generators of the group
		$\frac{D_R^{(k-2)/4}}{2\pi i}\Lambda_{k,w_R}$.
		\begin{lemma}
			\label{lem:CM-raised-monomials}
			For $m\ge0$ and $0\le r\le2m$,
			$$
			\refstepcounter{equation}\label{eq:CM-raised-monomials}
			D_R^{m/2}(2\pi i)^m
			\left.\mathcal R^{-2m}_{[m],w}w^r\right|_{w=w_R}
			=
			(-1)^{m+r}\frac{r!}{m!}
			\left.
			\frac{d^{2m-r}}{du^{2m-r}}R(u,1)^m
			\right|_{u=0}.
			\eqno(\theequation)
			$$
			The right-hand side belongs to $\mathbb Z[A,B,C]$.
		\end{lemma}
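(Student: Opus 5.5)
The plan is to obtain \eqref{eq:CM-raised-monomials} from the raised-kernel identity \eqref{eq:raised-CM-kernel-bullet}, now applied in the variable $w$ and with $k-2=2m$, by expanding $(w-u)^{2m}$ in powers of $w$ and comparing coefficients of powers of $u$. Here $u$ is a parameter independent of $w$, and $\mathcal R^{-2m}_{[m],w}$ is linear and acts only on $w$.

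\emph{Step 1: the kernel identity in $w$.} The computation in the proof of Proposition~\ref{prop:I-bullet-T}, with $k=2m+2$ and with $(z,Q)$ replaced by $(w,R)$, gives
$$
\mathcal R^{-2m}_{[m],w}(w-u)^{2m}=\frac{(2m)!}{m!\,(2\pi i)^m}\Bigl((w-u)\Bigl(1-\frac{w-u}{2i\Im w}\Bigr)\Bigr)^m .
$$
Using \eqref{eq:CM-homogeneous-coordinates} for $R$, namely $1/(2i\Im w_R)=A/\sqrt{D_R}$ and $w_R=(B+\sqrt{D_R})/(2A)$, a direct check gives
$$
(w_R-u)\Bigl(1-\frac{w_R-u}{2i\Im w_R}\Bigr)=-\frac{R(u,1)}{\sqrt{D_R}} .
$$
Multiplying by $D_R^{m/2}(2\pi i)^m$ then yields
$$
D_R^{m/2}(2\pi i)^m\,\mathcal R^{-2m}_{[m],w}(w-u)^{2m}\big|_{w=w_R}=(-1)^m\frac{(2m)!}{m!}R(u,1)^m .
$$
If the raising formula is not simply taken from the earlier proof, I would verify it by induction on the number of raising steps. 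This is the only step that needs care, and I expect it to be the main (though mild) obstacle.

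\emph{Step 2: comparison of coefficients.} Write $X_r:=D_R^{m/2}(2\pi i)^m\,\mathcal R^{-2m}_{[m],w}w^r|_{w=w_R}$. By linearity the left-hand side of Step 1 equals
$$
\sum_{r=0}^{2m}\binom{2m}{r}(-1)^{r}u^{2m-r}X_r ,
$$
using $(-1)^{2m-r}=(-1)^r$. Both sides are polynomials in $u$. The coefficient of $u^{2m-r}$ on the right is $(-1)^m\frac{(2m)!}{m!}\frac{1}{(2m-r)!}\frac{d^{2m-r}}{du^{2m-r}}R(u,1)^m\big|_{u=0}$. Equating the two coefficients and dividing by $\binom{2m}{r}=\frac{(2m)!}{r!(2m-r)!}$ gives exactly $X_r=(-1)^{m+r}\frac{r!}{m!}\frac{d^{2m-r}}{du^{2m-r}}R(u,1)^m|_{u=0}$.

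\emph{Step 3: integrality.} We have $\frac{d^{2m-r}}{du^{2m-r}}R(u,1)^m|_{u=0}=(2m-r)!\,c_r$, where $c_r\in\mathbb Z[A,B,C]$ is the coefficient of $u^{2m-r}$ in $(Au^2-Bu+C)^m$. The right-hand side is therefore $\pm\frac{r!(2m-r)!}{m!}c_r$. Since $\max(r,2m-r)\ge m$, one of $r!/m!$ or $(2m-r)!/m!$ is an integer, so the whole factor $\frac{r!(2m-r)!}{m!}$ is an integer. Hence the right-hand side lies in $\mathbb Z[A,B,C]$.
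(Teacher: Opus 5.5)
Your proof is correct and is the paper's argument: the identity comes from specializing \eqref{eq:raised-CM-kernel-bullet} to $k=2m+2$, $Q=R$ and comparing coefficients of $u^{2m-r}$, as you do. The only difference is the integrality step. The paper expands $R(u,1)^m$ multinomially and factors each coefficient as $\frac{r!(2m-r)!}{i!j!\ell!}=\binom{2i+j}{i}\frac{(i+j)!}{j!}\frac{(2\ell+j)!}{\ell!}$, whereas you note that $\frac{r!(2m-r)!}{m!}\in\mathbb Z$ because $\max(r,2m-r)\ge m$, which is a slightly quicker route to the same conclusion.
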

		
		\begin{proof}
			For $k=2m+2$ and $Q=R$, comparison of the coefficients of $u^{2m-r}$ in \eqref{eq:raised-CM-kernel-bullet} gives
			$$
			D_R^{m/2}(2\pi i)^m
			\left.\mathcal R^{-2m}_{[m],w}w^r\right|_{w=w_R}
			=
			(-1)^{m+r}\frac{r!(2m-r)!}{m!}
			\operatorname{coeff}_{u^{2m-r}}R(u,1)^m.
			$$
			Since the coefficient of $u^{2m-r}$ is
			$\frac1{(2m-r)!}\left.\frac{d^{2m-r}}{du^{2m-r}}R(u,1)^m\right|_{u=0}$,
			this proves \eqref{eq:CM-raised-monomials}.
			
			The right-hand side of \eqref{eq:CM-raised-monomials} can be written as
			$$
			(-1)^m
			\sum_{\substack{i+j+\ell=m\\2\ell+j=r}}
			\frac{r!(2m-r)!}{i!j!\ell!}A^iB^jC^\ell.
			$$
			Since $r=2\ell+j$ and $2m-r=2i+j$, we have
			$$
			\frac{r!(2m-r)!}{i!j!\ell!}
			=
			\frac{(2\ell+j)!(2i+j)!}{i!j!\ell!}
			=
			\binom{2i+j}{i}\frac{(i+j)!}{j!}\frac{(2\ell+j)!}{\ell!}
			\in\mathbb Z.
			$$
		\end{proof}
		
		By Theorem~\ref{thm:exact-period-module}, the rows of $M_k$ specify generators of $\Lambda_{k,w_R}$ as linear combinations of the independent raised periods. Let $b_{k,\nu}$ denote these generators multiplied by $D_R^{(k-2)/4}/(2\pi i)$. If $(M_k)_{\nu j}$ denotes the $(\nu,j)$-entry of $M_k$, then
		$$
		\refstepcounter{equation}\label{eq:general-CM-generators}
		b_{k,\nu}
		=
		\frac{(-1)^{(k-2)/2}}{2\left(\frac{k-2}{2}\right)!}
		D_R^{(k-2)/4}(2\pi i)^{(k-2)/2}
		\sum_j(M_k)_{\nu j}\widehat\Pi_j^{(k)}(w_R).
		\eqno(\theequation)
		$$
		Applying Lemma~\ref{lem:CM-raised-monomials} to the explicit formulas for $\Pi_j^{(k)}$, we find that $b_{k,\nu}$ is a polynomial in $A,B,C$ with rational coefficients whose denominators divide $4$. For example,
		$$
		b_{4,1}=2A+B+2C,\qquad
		b_{6,1}=\frac34(2A^2+2AB+2AC+B^2+2BC+2C^2).
		$$
		
		Set $\kappa=3B^2+2(A+C+AB+AC+BC)$. For integral $A,B,C$, the following congruences hold:
		$$
		\refstepcounter{equation}\label{eq:general-CM-congruences}
		\begin{gathered}
			4b_{4,1}\equiv0,\qquad
			4b_{6,1}\equiv\kappa,\qquad
			4b_{8,\nu}\equiv0\quad(1\le\nu\le3),\\
			4b_{10,1}\equiv2B^2,\qquad
			4b_{10,2}\equiv0,\qquad
			4b_{10,3}\equiv\kappa,\\
			4b_{14,\nu}\equiv0\quad(1\le\nu\le4),\qquad
			4b_{14,5}\equiv3B^2
			\pmod4.
		\end{gathered}
		\eqno(\theequation)
		$$
		
		The rational number $q$ in
		Proposition~\ref{prop:CM-period-module-discrete} is
		$q=\frac14\gcd_\nu |4b_{k,\nu}|$.
		
		\begin{example}
			\label{ex:CM-period-generators-d7}
			Let $w_*=(1+i\sqrt7)/2$. For each $k\in S$, the normalized period
			group is
			$(-7)^{(k-2)/4}\Lambda_{k,w_*}=2\pi i q_k\mathbb Z$, where
			$$
			(q_4,q_6,q_8,q_{10},q_{14})
			=\left(5,\frac{27}{4},5,\frac74,\frac{21}{4}\right).
			$$
			These follow from Theorem~\ref{thm:exact-period-module} and
			Appendix~\ref{appendix:explicit-raised-periods} at $t=1$.
		\end{example}
		\section{Expansion of $\widehat I^\bullet_{k/2}(Q;w)$ in terms of raised periods}
		\label{appendix:CM-normalization}
		
		Let $Q=[a,b,c]$ and $z_Q\notin\Gamma w$. Substituting \eqref{eq:S-transformation-hw} into \eqref{eq:H-bullet-definition}, then applying the Maass raising operators and using
		\eqref{relation:raised-periods-reflection}, gives the following explicit
		expansion of $\widehat I^\bullet_{k/2}(Q;w)$.
		
		\begin{corollary}
			\label{cor:I-bullet-explicit}
			Let $Q=[a,b,c]$ and $z_Q\notin\Gamma w$. Then, for $k\in S$, the following expansions hold.
			
			\noindent\textbf{$k=4$.}
			$$
			\refstepcounter{equation}\label{eq:I-bullet-k4}
			\widehat I_2^\bullet(Q;w)
			=
			D_Q^{1/2}\mathcal G_2(z_Q,w)
			-4\pi^2a\widehat\Pi_0^{(4)}(w)
			-2\pi^2b\widehat\Pi_1^{(4)}(w).
			\eqno(\theequation)
			$$
			
			\noindent\textbf{$k=6$.}
			$$
			\refstepcounter{equation}\label{eq:I-bullet-k6}
			\widehat I_3^\bullet(Q;w)
			=D_Q\mathcal G_3(z_Q,w)
			+4\pi^3i\bigl(
			a^2\widehat\Pi_0^{(6)}(w)
			+b(a+c)\widehat\Pi_1^{(6)}(w)
			\bigr).
			\eqno(\theequation)
			$$
			
			\noindent\textbf{$k=8$.}
			$$
			\refstepcounter{equation}\label{eq:I-bullet-k8}
			\begin{aligned}
				\widehat I_4^\bullet(Q;w)
				={}&D_Q^{3/2}\mathcal G_4(z_Q,w)
				+\frac{4\pi^4}{3}\Bigl(
				2a^3\widehat\Pi_0^{(8)}(w)
				+3b(a^2+c^2)\widehat\Pi_1^{(8)}(w)\\
				&\qquad
				+3(a-c)(ac+b^2)\widehat\Pi_2^{(8)}(w)
				+b(6ac+b^2)\widehat\Pi_3^{(8)}(w)
				\Bigr).
			\end{aligned}
			\eqno(\theequation)
			$$
			
			\noindent\textbf{$k=10$.}
			$$
			\refstepcounter{equation}\label{eq:I-bullet-k10}
			\begin{aligned}
				\widehat I_5^\bullet(Q;w)
				={}&D_Q^2\mathcal G_5(z_Q,w)
				-\frac{4\pi^5i}{3}\Bigl(
				a^4\widehat\Pi_0^{(10)}(w)
				+2b(a^3+c^3)\widehat\Pi_1^{(10)}(w)\\
				&\qquad
				+(a^2-c^2)(2ac+3b^2)\widehat\Pi_2^{(10)}(w)
				+2b(a+c)(3ac+b^2)\widehat\Pi_3^{(10)}(w)
				\Bigr).
			\end{aligned}
			\eqno(\theequation)
			$$
			
			\noindent\textbf{$k=14$.}
			$$
			\refstepcounter{equation}\label{eq:I-bullet-k14}
			\begin{aligned}
				\widehat I_7^\bullet(Q;w)
				={}&D_Q^3\mathcal G_7(z_Q,w)
				+\frac{4\pi^7i}{45}\Bigl(
				2a^6\widehat\Pi_0^{(14)}(w)
				+6b(a^5+c^5)\widehat\Pi_1^{(14)}(w)\\
				&\qquad
				+3(a^4-c^4)(2ac+5b^2)\widehat\Pi_2^{(14)}(w)
				+10b(a^3+c^3)(3ac+2b^2)\widehat\Pi_3^{(14)}(w)\\
				&\qquad
				+15(a^2-c^2)(a^2c^2+4ab^2c+b^4)\widehat\Pi_4^{(14)}(w)\\
				&\qquad
				+6b(a+c)(10a^2c^2+10ab^2c+b^4)\widehat\Pi_5^{(14)}(w)
				\Bigr).
			\end{aligned}
			\eqno(\theequation)
			$$
		\end{corollary}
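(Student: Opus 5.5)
Starting from the symmetric definition \eqref{eq:H-bullet-definition}, I will substitute the $S$-transformation formula \eqref{eq:S-transformation-hw} for $z^{k-2}H_k(-1/z,w)$. This gives the expanded form \eqref{eq:H-bullet-expanded}:
$$
H_k^\bullet(z,w)=H_k(z,w)+\frac{C_k}{2}\sum_{j=0}^{k-2}(-1)^j\binom{k-2}{j}z^{k-2-j}\Pi_j^{(k)}(w)-\frac{C_k}{2}(z^{k-2}+1)\Pi_0^{(k)}(w).
$$
Next I apply $\mathcal R^{2-k}_{[k/2-1],w}$. It commutes with multiplication by polynomials in $z$ and turns each $\Pi_j^{(k)}$ into $\widehat\Pi_j^{(k)}$. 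Then I apply $\mathcal R^{2-k}_{[k/2-1],z}$, multiply by $D_Q^{(k-2)/4}$ and set $z=z_Q$. The first term becomes $D_Q^{(k-2)/4}\mathcal G_{k/2}(z_Q,w)$. Each remaining term is a raised monomial $z^{k-2-j}$ evaluated at a CM point.

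To evaluate these raised monomials I will use \eqref{eq:raised-CM-kernel-bullet}. It gives the raised kernel $(z-u)^{k-2}$ at $z_Q$ as an explicit multiple of $Q(u,1)^{k/2-1}$. Rather than evaluating monomials individually, I write the polynomial term as $\frac{C_k}{2}\int_0^{i\infty}\bigl((z-u)^{k-2}-z^{k-2}-1\cdot[\text{$u$-constant part}]\bigr)F_k\,du$. Equivalently, with $m=k/2-1$, I use Lemma~\ref{lem:CM-raised-monomials} for $Q$ in place of $R$: it gives $D_Q^{m/2}(2\pi i)^m\mathcal R^{-2m}_{[m]}z^r|_{z_Q}=(-1)^{m+r}\frac{r!}{m!}\frac{d^{2m-r}}{du^{2m-r}}Q(u,1)^m|_{u=0}$. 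Note that $D_Q^{(k-2)/4}=D_Q^{m/2}$.

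The coefficient of $\widehat\Pi_j^{(k)}$ is therefore
$$
\frac{C_k}{2}(-1)^j\binom{k-2}{j}\,(2\pi i)^{-m}(-1)^{m+k-2-j}\frac{(k-2-j)!}{m!}\,j!\,[u^j]Q(u,1)^m,
$$
corrected for $j=0$ by the $-(z^{k-2}+1)\Pi_0$ term. Here $Q(u,1)=au^2-bu+c$, so $[u^j]Q(u,1)^m$ is an explicit polynomial in $a,b,c$.

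Next I fold the terms pairwise using the reflection \eqref{relation:raised-periods-reflection}, $\widehat\Pi_{k-2-j}=(-1)^{j+1}\widehat\Pi_j$. Index $j$ is paired with $k-2-j$, which converts $[u^j]$ and $[u^{k-2-j}]$ into the symmetric combinations visible in the statement. Examples are $b(a^{m}+c^{m})$, $(a^2-c^2)(\cdots)$, and so on; the reflection $u\mapsto1/u$ of $Q(u,1)^m$ swaps $a$ and $c$. For the $j=0$ term I combine $\frac{C_k}{2}\widehat\Pi_0\cdot(\text{raised }z^{k-2})$ with $-\frac{C_k}{2}(\text{raised }z^{k-2}+\text{raised }1)\widehat\Pi_0$. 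The surviving piece is $-\frac{C_k}{2}$ times raised $1$ times $\widehat\Pi_0$, plus the reflected $j=k-2$ contribution $\frac{C_k}{2}\widehat\Pi_{k-2}\cdot(\text{raised }1)=-\frac{C_k}{2}\widehat\Pi_0\cdot(\text{raised }1)$. Raised $1$ at $z_Q$ corresponds to $[u^{k-2}]Q^m=a^m$, which gives the $a^{m}\widehat\Pi_0$ term. When $k\equiv2\pmod4$ the middle period vanishes, so the middle index contributes nothing.

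The final step is to compute the overall constant $\frac{C_k}{2}\frac{(k-2)!}{m!(2\pi i)^m}$ for each $k\in S$. This should reproduce the prefactors $-4\pi^2$, $4\pi^3i$, $\frac{4\pi^4}{3}$, $-\frac{4\pi^5i}{3}$ and $\frac{4\pi^7i}{45}$, after pulling integer factors into the brackets. For $k=4$, for instance, the constant is $\frac{(2\pi i)^3}{2}\cdot\frac{2}{2\pi i}=-4\pi^2$, matching the $-4\pi^2a\widehat\Pi_0$ term.

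I expect the main obstacle to be the bookkeeping of signs and of the factors $j!(k-2-j)!$ against the binomials when extracting $[u^j]Q(u,1)^m$ and merging reflected pairs, especially for $k=14$, where five independent periods appear. I would verify the $k=4$ and $k=6$ cases by hand and check the others symbolically, coefficient by coefficient.
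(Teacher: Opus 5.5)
Your proposal is correct and is essentially the paper's own argument: substitute \eqref{eq:S-transformation-hw} into \eqref{eq:H-bullet-definition}, raise in both variables, evaluate the raised monomials at $z_Q$ via \eqref{eq:raised-CM-kernel-bullet} (equivalently Lemma~\ref{lem:CM-raised-monomials} with $Q$ in place of $R$), and fold with \eqref{relation:raised-periods-reflection}. Your coefficient of $\widehat\Pi_j^{(k)}$ simplifies to $K_k\,[u^j]Q(u,1)^{m}$ with $K_k=(-1)^m(2\pi i)^{m+1}/(2\,m!)$ for $1\le j\le k-3$, and to $-2K_k a^m$ for $\widehat\Pi_0^{(k)}$; this reproduces all five expansions. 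There is one cosmetic slip: $C_4/2=(2\pi i)^3/4$, so the common constant for $k=4$ is $K_4=2\pi^2$ up to the sign you absorb into the bracket, and the $-4\pi^2a$ comes from the doubled $\widehat\Pi_0^{(4)}$ contribution rather than from the constant itself.
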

	\end{appendix}

\end{document}